\newcommand{\rel}{{F / \phi}}
\newtheorem{theorem}{Theorem}
\newtheorem{assumption}{Assumption}
\newtheorem{definition}{Definition}
\newtheorem{remark}{Remark}
\newcommand{\R}{\mathbb{R}}
\newcommand{\esp}[1]{\mathbb{E}\left[ #1 \right]}
\newcommand{\minimize}{\mathop{\rm minimize}}
\newcommand{\dom}{{\mathrm{dom\,}}}
\newcommand{\BEAS}{\begin{eqnarray*}}
\newcommand{\EEAS}{\end{eqnarray*}}
\newcommand{\BEA}{\begin{eqnarray}}
\newcommand{\EEA}{\end{eqnarray}}
\newcommand{\BEQ}{\begin{equation}}
\newcommand{\EEQ}{\end{equation}}
\newcommand{\BIT}{\begin{itemize}}
\newcommand{\EIT}{\end{itemize}}
\newcommand{\BNUM}{\begin{enumerate}}
\newcommand{\ENUM}{\end{enumerate}}
\newcommand{\BA}{\begin{array}}
\newcommand{\EA}{\end{array}}
\newcommand{\argmin}{\mathop{\rm argmin}}
\newcommand{\dE}{\mathbb {E}}
\newcommand{\dP}{\mathbb{P}}
\newcommand{\dR}{\mathbb {R}}
\newcommand{\cB}{\mathcal {B}}
\newcommand{\cN}{\mathcal {N}}
\newcommand{\cS}{\mathcal {S}}
\newcommand{\opn}[1]{{{\left\lVert#1\right\rVert_{op}}}}
\newcommand{\second}{{\prime \prime}}
\newtheorem{lemma}{Lemma}
\newtheorem{corollary}{Corollary}
\def \E{{\mathbb E}}
\def \X{{\mathcal X}}
\def \Y{{\mathcal Y}}
\title{Statistically Preconditioned Accelerated Gradient Method\\ for  Distributed Optimization}
\author{
Hadrien Hendrikx\thanks{D\'epartement d’informatique de l’ENS, ENS, CNRS, PSL University, Paris, France} \footnotemark[3]\\
(\texttt{hadrien.hendrikx@inria.fr})
\and
Lin Xiao\thanks{Machine Learning and Optimization Group, 
Microsoft Research, Redmond, WA.}\\
(\texttt{lin.xiao@microsoft.com})
\and 
S\'ebastien Bubeck\footnotemark[2]\\
(\texttt{sebubeck@microsoft.com})
\and
Francis Bach\footnotemark[1] \thanks{INRIA, Paris, France}\\
(\texttt{francis.bach@inria.fr}) 
\and
Laurent Massouli\'e\footnotemark[1] \footnotemark[3]\\
(\texttt{laurent.massoulie@inria.fr})
}
\date{}
\begin{document}

\maketitle

\begin{abstract}
We consider the setting of distributed empirical risk minimization where multiple machines compute the gradients in parallel and a centralized server updates the model parameters.
In order to reduce the number of communications required to reach a given accuracy, we propose a \emph{preconditioned} accelerated gradient method where the preconditioning is done by solving a local optimization problem over a subsampled dataset at the server.
The convergence rate of the method depends on the square root of the relative condition number between the global and local loss functions.
We estimate the relative condition number for linear prediction models by studying \emph{uniform} concentration of the Hessians over a bounded domain, which allows us to derive improved convergence rates for existing preconditioned gradient methods and our accelerated method.
Experiments on real-world datasets illustrate the benefits of acceleration in the ill-conditioned regime.
\end{abstract}

\section{Introduction}

We consider empirical risk minimization problems of the form
\begin{equation}\label{eqn:reg-ERM}
    \minimize_{x\in\R^d}~~ \Phi(x)\triangleq F(x)+\psi(x),
\end{equation} 
where $F$ is the empirical risk over a dataset $\{z_1,\ldots,z_N\}$:
\begin{equation}\label{eqn:average-loss}
    F(x) = \frac{1}{N}\sum_{i=1}^N \ell(x,z_i),
\end{equation}
and $\psi$ is a convex regularization function. 
We incorporate smooth regularizations such as squared Euclidean norms $(\lambda/2)\|x\|^2$
into the individual loss functions $\ell(x,z_i)$, 
and leave $\psi$ mainly for non-smooth regularizations
such as the $\ell_1$-norm or the indicator function of a constraint set.

In modern machine learning applications, the dataset is often very large and 
has to be stored at multiple machines.
For simplicity of presentation, we assume $N=mn$, where $m$ is the number
of machines and $n$ is the number of samples stored at each machine.
Let $\mathcal{D}_j=\{z^{(j)}_1,\ldots,z^{(j)}_n\}$ denote the dataset
at machine~$j$ and define the local empirical risk 
\begin{equation}\label{eqn:fj-def}
    f_j(x) = \frac{1}{n}\sum_{i=1}^n \ell\bigl(x, z^{(j)}_i\bigr),
    \quad j=1,\ldots,m.
\end{equation}
The overall empirical risk of Equation~\eqref{eqn:average-loss} can then be written as
\[
    F(x) = \frac{1}{m}\sum_{j=1}^m f_j(x) = 
    \frac{1}{nm}\sum_{j=1}^m\sum_{i=1}^n \ell\bigl(x, z^{(j)}_i\bigr).
\]
We assume that $F$ is $L_F$-smooth and $\sigma_F$-strongly convex 
over $\dom\psi$, in other words,
\begin{equation}\label{eqn:classic-ssc}
  \sigma_F I_d \preceq \nabla^2 F(x) \preceq L_F I_d, \quad \forall x\,\in\dom\psi,
\end{equation}
where $I_d$ is the $d\times d$ identity matrix.
The condition number of $F$ is defined as $\kappa_F=L_F/\sigma_F$.

We focus on a basic setting of distributed optimization where the~$m$ machines
(workers) compute the gradients in parallel and a centralized server updates the
variable~$x$.
Specifically, during each iteration $t=0,1,2,\ldots$,
\begin{enumerate}[label=(\roman*)]
    \item the server broadcasts $x_t$ to all $m$ machines;
    \item each machine~$j$ computes the gradient $\nabla f_j(x_t)$ and sends 
it back to the server;
    \item the server forms $\nabla F(x_t) = \frac{1}{m}\sum_{j=1}^m \nabla f_j(x_t)$ and uses it to compute the next iterate $x_{t+1}$.
\end{enumerate}
A standard way for solving problem~\eqref{eqn:reg-ERM} in this setting
is to implement the proximal gradient method at the server:
\begin{equation}\label{eqn:prox-grad}
x_{t+1} \!=\! \argmin_{x \in \R^d}\Bigl\{\nabla F(x_t)^\top\! x \!+\! \psi(x)\!+\!\frac{1}{2\eta_t}\|x\!-\!x_t\|^2\Bigr\},
\end{equation}
where $\|\cdot\|$ denotes the Euclidean norm and $\eta_t>0$ is the step size.
Setting $\eta_t=1/L_F$ leads to linear convergence:
\begin{equation}\label{eqn:prox-grad-rate}
    \Phi(x_t) - \Phi(x_*) \leq \left(1-\kappa_F^{-1}\right)^t \textstyle\frac{L_F}{2} \|x_*-x_0\|^2,
\end{equation}
where $x_*=\arg\min \Phi(x)$ \citep[e.g.,][Section~10.6]{Beck17book}.
In other words, in order to reach $\Phi(x_t)-\Phi(x_*)\leq\epsilon$, we need
$O(\kappa_F\log(1/\epsilon))$ iterations, which is also the number of
communication rounds between the workers and the server.
If we use accelerated proximal gradient methods 
\citep[e.g.,][]{Nesterov04book, BeckTeboulle09fista, Nesterov13composite}
at the server, then the iteration/communication complexity can be improved to
$O(\sqrt{\kappa_F}\log(1/\epsilon))$.

\subsection{Statistical Preconditioning}
\label{sec:stat-precond}

In general, for minimizing $F(x)=(1/m)\sum_{j=1}^m f_j(x)$ with first-order 
methods, the communication complexity of $O(\sqrt{\kappa_F}\log(1/\epsilon))$ 
cannot be improved \citep{ArjevaniShamir2015NIPS,ScamanBach2017}.
However, for distributed empirical risk minimization (ERM), the additional 
finite-sum structure of each $f_j$ in~\eqref{eqn:fj-def} 
allows further improvement.
A key insight here is that if the datasets $\mathcal{D}_j$ at different workers
are i.i.d.~samples from the same source distribution, then the local empirical losses $f_j$ are statistically very similar to each other and to their average~$F$, especially when~$n$ is large.
\emph{Statistical preconditioning} is a technique to further reduce 
communication complexity based on this insight.

An essential tool for preconditioning in first-order methods
is the Bregman divergence. 
The Bregman divergence of a strictly convex and differentiable function $\phi$ is defined as
\begin{equation}\label{eqn:Bregman-def}
    D_\phi(x, y) \triangleq \phi(x) - \phi(y) - \nabla \phi(y)^\top(x - y).
\end{equation}
We also need the following concepts of relative smoothness and strong convexity
\cite{bauschke2017descent,lu2018relatively}.
\begin{definition}
\label{def:rel-ssc}
Suppose $\phi:\R^d\to\R$ is convex and twice differentiable.
The function~$F$ is said to be $L_{F/\phi}$-smooth and 
$\sigma_{F/\phi}$-strongly convex with respect to~$\phi$ if for all $x\in\R^d$,
\begin{equation}\label{eqn:rel-ssc}
    \sigma_{F/\phi} \nabla^2\phi(x) \preceq \nabla^2 F(x) \preceq L_{F/\phi} \nabla^2\phi(x). 
\end{equation}
\end{definition}
The classical definition in~\eqref{eqn:classic-ssc} can be viewed
as relative smoothness and strong convexity where $\phi(x)=(1/2)\|x\|^2$.
Moreover, it can be shown that~\eqref{eqn:rel-ssc} holds if and only if for all $x,y \in \R^d$
\begin{equation}\label{eqn:rel-ssc-div}
    \sigma_{F/\phi} D_\phi(x,y) \leq D_F(x,y) \leq L_{F/\phi} D_\phi(x,y).
\end{equation}
Consequently, we define the relative condition number of $F$ with respect to $\phi$ as $\kappa_{F/\phi}=L_{F/\phi}/\sigma_{F/\phi}$.

Following the Distributed Approximate Newton (DANE) method 
by \citet{shamir2014communication}, 
we construct the reference function $\phi$ by
adding some extra regularization to one of the local loss functions
(say $f_1$, without loss of generality):
\begin{equation}\label{eqn:reference-def}
    \phi(x) = f_1(x) + \frac{\mu}{2}\|x\|^2.
\end{equation}
Then we replace $(1/2)\|x-x_t\|^2$ in the proximal gradient method~\eqref{eqn:prox-grad} with the Bregman divergence of~$\phi$, i.e., 
\begin{equation}\label{eqn:rel-prox-grad}
    x_{t+1}\! =\! \argmin_{x\in \R^d} \Bigl\{\nabla F(x_t)^\top\! x + \psi(x) + \frac{1}{\eta_t} D_\phi(x,x_t) \Bigr\}.
\end{equation}
In this case, worker~1 acts as the server to compute $x_{t+1}$, which requires solving a nontrivial optimization problem involving the local loss function~$f_1$.

According to \citet{shamir2014communication} and \citet{lu2018relatively}, 
with $\eta_t=1/L_{F/\phi}$, the sequence
$\{x_t\}$ generated by~\eqref{eqn:rel-prox-grad} satisfies
\begin{equation}\label{eqn:rel-grad-rate}
    \Phi(x_t) - \Phi(x_*) \leq \bigl(1-\kappa_{F/\phi}^{-1}\bigr)^t 
    L_{F/\phi} D_\phi(x_*, x_0),
\end{equation}
which is a direct extension of~\eqref{eqn:prox-grad-rate}. 
Therefore, the effectiveness of preconditioning hinges on how much smaller
$\kappa_{F/\phi}$ is compared to~$\kappa_F$.
Roughly speaking, the better $f_1$ or $\phi$ approximates~$F$, 
the smaller $\kappa_{F/\phi}$ ($\geq 1$) is. 
In the extreme case of $f_1\equiv F$ (with only one machine $m=1$), we can choose $\mu=0$ and thus
$\phi\equiv F$, which leads to $\kappa_{F/\phi}=1$, and we obtain
the solution within one step.

In general, we choose~$\mu$ to be an upper bound on the spectral norm of
the matrix difference $\nabla^2 f_1-\nabla^2 F$. 
Specifically, we assume that \emph{with high probability}, for the operator norm between matrices (i.e., the largest singular value),
\begin{equation}\label{eqn:Hessian-approx-mu}
    \left\|\nabla^2 f_1(x) - \nabla^2 F(x)\right\| \leq \mu, 
    \quad \forall\, x\in\dom\psi,
\end{equation}
which implies \citep[][Lemma~3]{ZhangXiao2018DiSCO},
\begin{equation}\label{eqn:rel-sigma-mu}
    \frac{\sigma_F}{\sigma_F+2\mu} \nabla^2\phi(x) 
    \preceq \nabla^2 F(x) \preceq \nabla^2\phi(x).
\end{equation}
Now we invoke a statistical argument based on the empirical average structure
in~\eqref{eqn:fj-def}.
Without loss of generality, we assume that $\mathcal{D}_1$ contains the first~$n$ samples of $\{z_1,\ldots,z_N\}$ and thus
$\nabla^2 f_1(x) = \frac{1}{n}\sum_{i=1}^n \nabla^2\ell(x,z_i)$.
For any fixed~$x$, we can use Hoeffding's inequality for 
matrices \citep{tropp2015introduction} to obtain,
with probability $1-\delta$,
\begin{equation}\label{eqn:matrix-hoeffding}
    \biggl\|\frac{1}{n}\!\sum_{i=1}^n \!\nabla^2\ell(x,z_i)\!-\!\nabla^2 F(x)\biggr\|
    \!\leq\! \sqrt{\frac{32 L_\ell^2 \log(d/\delta)}{n}},
\end{equation}
where $L_\ell$ is the uniform upper bound on $\|\nabla^2\ell(x,z_i)\|$. 

If the losses $\ell(x,z_i)$ are \emph{quadratic} in~$x$, then the Hessians are 
constant and~\eqref{eqn:Hessian-approx-mu} holds with 
$\mu=\widetilde{O}(L_\ell/\sqrt{n})$, hiding the factor $\log(d/\delta)$.
In this case, we derive from~\eqref{eqn:rel-sigma-mu} that
\begin{equation}\label{eqn:rel-cond-quadratic}
\kappa_{F/\phi}=1 + \frac{2\mu}{\sigma_F}
=1+\widetilde{O}\left(\frac{\kappa_\ell}{\sqrt{n}}\right),
\end{equation}
where we assume $\sigma_F\approx \sigma_\ell$, where $\nabla^2 \ell(x, z_i) \succeq \sigma_\ell I_d$ for all $x$.
Therefore,  for large~$n$, whenever we have $\kappa_{F/\phi}<\kappa_F$,
the communication complexity $O(\kappa_{F/\phi}\log(1/\epsilon))$
is better than without preconditioning.

For non-quadratic loss functions, we need to ensure that \eqref{eqn:Hessian-approx-mu} holds \emph{uniformly} over a compact domain with high probability.
Standard ball-packing arguments encounter an additional factor of $\sqrt{d}$
\citep[e.g.,][Lemma~6]{ZhangXiao2018DiSCO}.
In this case, we have $\mu=\widetilde{O}(L_\ell\sqrt{d/n})$ and 
\begin{equation}\label{eqn:rel-cond-non-quadratic}
    \kappa_{F/\phi}=1 + \frac{2\mu}{\sigma_F}
     =1+\widetilde{O}\biggl(\frac{\kappa_\ell\sqrt{d}}{\sqrt{n}}\biggr),
 \end{equation}
which suggests that the benefit of preconditioning may degrade or disappear in high dimension.

\subsection{Contributions and Outline}
In this paper, we make the following two contributions.
    
First, we propose a Statistically Preconditioned Accelerated Gradient (SPAG) method that can further reduce the communication complexity. 
Accelerated methods with $O(\sqrt{\kappa_{F/\phi}}\log(1/\epsilon))$ complexity
have been developed for quadratic loss functions
(see related works in Section~\ref{sec:related-works}).
However,
\citet{dragomir2019optimal} have shown that acceleration is not possible in general in the relatively smooth and strongly convex setting, and that more assumptions are needed.
Here, by leveraging the fact the reference function~$\phi$ itself is smooth and strongly convex, we obtain
\[
    \Phi(x_t) - \Phi(x_*) \leq \prod_{\tau=1}^{t}\biggl(1-\frac{1}{\sqrt{\kappa_{F/\phi} G_\tau}} \biggr) L_{F/\phi} D_\phi(x_*, x_0),
\]
where $1 \leq G_t\leq \kappa_\phi$ and $G_t\to 1$ geometrically.
Moreover, $G_t$ can be calculated at each iteration and serve as numerical certificate of the actual convergence rate. 
In all of our experiments, we observe $G_t\approx 1$ even in early iterations,
which results in
$O(\sqrt{\kappa_{F/\phi}}\log(1/\epsilon))$ iterations empirically.

Second, we derive refined bounds on the relative condition number for linear prediction models. 
Linear models such as logistic regression have the form
$\ell(x, z_i) = \ell_i(a_i^\top x) + (\lambda/2)\|x\|^2$. 
Assume that $\ell''_i(a_i^\top x)\leq 1$ and $\|a_i\|\leq R$ for all~$i$,
which implies $L_\ell=R^2$ and $\kappa_\ell=R^2/\lambda$.
Then the Hoeffding bounds in~\eqref{eqn:rel-cond-quadratic} for quadratics becomes
$\kappa_{F/\phi}=1+\widetilde{O}\Bigl(\frac{R^2}{\sqrt{n}\lambda}\Bigr)$, 
and for nonquadratics, 
the bound in~\eqref{eqn:rel-cond-non-quadratic} (from previous work)
becomes
$\kappa_{F/\phi}=1+\widetilde{O}\Bigl(\frac{R^2\sqrt{d}}{\sqrt{n}\lambda}\Bigr)$.
We show that:
\begin{itemize}[itemsep=0pt,topsep=0pt,leftmargin=15pt]
    \item For quadratic losses, the bound on relative condition number can be improved by a factor of $\sqrt{n}$, i.e., 
        $$\kappa_{F/\phi}=\frac{3}{2} + O\left(\frac{R^2}{n\lambda}\log\left(\frac{d}{\delta}\right)\right).$$
    \item For non-quadratic losses, we derive a uniform concentration bound to remove the dependence of $\kappa_{F/\phi}$ on~$d$,
        $$\kappa_{F/\phi}=1 + O\left(\frac{R^2}{\sqrt{n}\lambda}\left(RD + \sqrt{\log(1/\delta)}\right)\right),$$
        where $D$ is the diameter of $\dom\phi$ (bounded domain).
        We also give a refined bound when the inputs $a_i$ are sub-Gaussian.
\end{itemize}
These new bounds on $\kappa_{F/\phi}$ improve the convergence rates for all existing accelerated and non-accelerated preconditioned gradient methods
(see related work in Section~\ref{sec:related-works}).

We start by discussing related work in Section~\ref{sec:related-works}. 
In Section~\ref{sec:spag}, we introduce SPAG and give its convergence analysis.
In Section~\ref{sec:concentration}, we derive sharp bounds on the relative condition number, and discuss their implications on the convergence rates of SPAG and other preconditioned gradient methods. We present experimental results in Section~\ref{sec:experiments}.

\section{Related Work}
\label{sec:related-works}

\citet{shamir2014communication} considered the case  $\psi\equiv 0$ and
introduced the statistical preconditioner~\eqref{eqn:reference-def} in DANE.
Yet, they define a separate $\phi_j(x)=f_j(x)+(\mu/2)\|x\|^2$ for each worker~$j$, compute~$m$ separate local updates using~\eqref{eqn:rel-prox-grad}, and then use their average as $x_{t+1}$.
For quadratic losses, they obtain the communication complexity
$\widetilde{O}((\kappa_\ell^2/n)\log(1/\epsilon))$, which is roughly
$O(\kappa_{F/\phi}^2\log(1/\epsilon))$ in our notation, which is much worse than their 
result without averaging of $O(\kappa_{F/\phi}\log(1/\epsilon))$ given in Section~\ref{sec:stat-precond}. We further improve this to $O(\sqrt{\kappa_{F/\phi}}\log(1/\epsilon))$ using acceleration.

\citet{zhang2015disco} proposed DiSCO, an inexact damped Newton method, 
where the Newton steps are computed by a distributed conjugate gradient method with a similar preconditioner as~\eqref{eqn:reference-def}.
They obtain a communication complexity of
$\widetilde{O}((\sqrt{\kappa_\ell}/n^{1/4})\log(1/\epsilon))$
for quadratic losses and 
$\widetilde{O}(\sqrt{\kappa_\ell}(d/n)^{1/4}\log(1/\epsilon))$
for self-concordant losses. 
Comparing with~\eqref{eqn:rel-cond-quadratic} 
and~\eqref{eqn:rel-cond-non-quadratic}, in both cases they correspond to 
$O(\sqrt{\kappa_{F/\phi}}\log(1/\epsilon))$ in our notation.
\citet{reddi2016aide} use the Catalyst framework \citep{lin2015universal} to
accelerate DANE; their method, called AIDE, 
achieves the same improved complexity for quadratic functions. We obtain similar results for smooth convex functions using direct acceleration.

\citet{yuan2019convergence} revisited the analysis of DANE and found that the worse complexity of $\widetilde{O}((\kappa_\ell^2/n)\log(1/\epsilon))$ is due to the lost statistical efficiency when averaging~$m$ different updates computed by~\eqref{eqn:rel-prox-grad}.
They propose to use a single local preconditioner at the server and obtain a communication complexity of $\widetilde{O}((1+\kappa_\ell/\sqrt{n})\log(1/\epsilon))$ for quadratic functions. In addition, they propose a variant of DANE with heavy-ball momentum (DANE-HB),
and show that it has communication complexity 
$\widetilde{O}((\sqrt{\kappa_\ell}/n^{1/4})\log(1/\epsilon))$
for quadratic loss functions, matching that of DiSCO and AIDE.
For non-quadratic functions, they show DANE-HB has accelerated local convergence rate near the solution.

\citet{wang2018giant} proposed GIANT, an approximate Newton method that 
approximates the overall Hessian by the harmonic mean of the local Hessians.
It is equivalent to DANE in the quadratic case.
They obtain a communication complexity that has logarithmic dependence on the condition number but requires local sample size $n>d$.
\citet{MahajanKeerthi17} proposed a distributed algorithm based on local function approximation, which is related to the preconditioning idea of DANE.
\citet{wang2019utilizing} apply statistical preconditioning to speed up a mini-batch variant of SVRG \citep{JohnsonZhang13}, but they rely on generic Catalyst acceleration and their convergence results only hold for a very small ball around the optimum.

Distributed optimization methods that use dual variables to coordinate solutions to local subproblems include ADMM \citep{Boyd10ADMM} and 
CoCoA \citep{CoCoA2014NIPS,CoCoA2015ICML,CoCoA2017arbitrary}.
Numerical experiments demonstrate that they benefit from statistical similarities of local functions in the early iterations \citep{dscovr2019}, but their established communication complexity is no better than $O(\kappa_F\log(1/\epsilon))$.

\section{The SPAG Algorithm}
\label{sec:spag}
Although our main motivation in this paper is distributed optimization, 
the SPAG algorithm works in the general setting of minimizing relatively smooth and strongly convex functions.
In this section, we first present SPAG in the more general setting (Algorithm~\ref{algo:spag}),
then explain how to run it for distributed empirical risk minimization.

In the general setting, we consider convex optimization problems of the form~\eqref{eqn:reg-ERM}, where $\psi$ is a closed convex function and $F$ satisfies the following assumption.

\begin{assumption}\label{asmp:rel-smooth-sc}
    $F$ is $L_F$-smooth and $\sigma_F$-strongly convex. In addition, 
    it is $L_{F/\phi}$-smooth and $\sigma_{F/\phi}$-strongly convex with respect to a differentiable convex function~$\phi$, and $\phi$ itself is $L_\phi$-smooth and $\sigma_\phi$-strongly convex. 
\end{assumption}

Algorithm~\ref{algo:spag} requires an initial point $x_0\in\dom\psi$
and two parameters $L_{F/\phi}$ and $\sigma_{F/\phi}$.
During each iteration, Line~6 finds $a_{t+1}>0$ by solving a quadratic equation, 
then Line~7 calculates three scalars $\alpha_t$, $\beta_t$ and $\eta_t$, which are used in the later updates for the three vectors $y_t$, $v_{t+1}$ and $x_{t+1}$.
The function $V_t(\cdot)$ being minimized in Line~10 is defined as
\begin{align}
    \label{eqn:Vt-def}
    V_t(x) ~=~ \eta_t\bigl(\nabla F(y_t)^\top x + \psi(x)\bigr) + (1 - \beta_t) D_\phi(x, v_t) + \beta_t D_\phi(x, y_t).
\end{align}
The inequality that needs to be satisfied in Line~12 is
\begin{align} \label{eqn:Bregman-scaling}
D_{\phi}(x_{t+1}, y_t) \leq~ \alpha_t^2 G_t 
\Bigl((1 - \beta_t) D_{\phi}(v_{t+1}, v_t) + \beta_t D_{\phi}(v_{t+1}, y_t)\Bigr),
\end{align}
where $G_t$ is a scaling parameter depending on the properties of $D_\phi$. 
It is a more flexible version of the \emph{triangle scaling gain} introduced 
by \citet{Hanzely2018abpg}.

\begin{algorithm}[t]
\caption{SPAG$(L_{F / \phi}, \sigma_\rel, x_0)$}
\label{algo:spag}
\begin{algorithmic}[1]
\STATE $v_0 = x_0$, $A_0 = 0$, $B_0 = 1$, $G_{-1}=1$
\vspace{0.5ex}
\FOR{$t=0,1,2,\ldots$}
\vspace{0.5ex}
\STATE $G_t=\max\{1, G_{t-1}/2\}/2$
\vspace{0.5ex}
\REPEAT
\vspace{0.5ex}
\STATE $G_t \gets 2 G_t$
\vspace{0.5ex}
\STATE Find $a_{t+1}$ such that $a_{t+1}^2 L_{\rel} G_t = A_{t+1} B_{t+1}$ 
where $A_{t+1} \!=\! A_t + a_{t+1}$, $B_{t+1} \!=\! B_t + a_{t+1}\sigma_\rel$
\vspace{0.5ex}
\STATE $\alpha_t = \frac{a_{t+1}}{A_{t+1}}$, 
       ~~$\beta_t = \frac{a_{t+1}}{B_{t+1}}\sigma_{\rel} $, 
       ~~$\eta_t = \frac{a_{t+1}}{B_{t+1}}$
\vspace{0.5ex}
\STATE $y_t = \frac{1}{1 - \alpha_t \beta_t}\bigl((1 - \alpha_t)x_t + \alpha_t(1 - \beta_t) v_t\bigr)$ 
\vspace{0.5ex}
\STATE Compute $\nabla F(y_t)$ \emph{(requires communication)}
\vspace{0.5ex}
\STATE $v_{t+1} = \arg\min_{x} V_t(x)$
\vspace{0.5ex}
\STATE $x_{t+1} = (1 - \alpha_t) x_t + \alpha_t v_{t+1}$
\vspace{0.5ex}
\UNTIL{Inequality~\eqref{eqn:Bregman-scaling} is satisfied}
\vspace{0.5ex}
\ENDFOR
\end{algorithmic}
\end{algorithm}

As we will see in Theorem~\ref{thm:spag}, smaller $G_t$'s correspond to faster convergence rate. Algorithm~\ref{algo:spag} implements a \emph{gain-search} procedure to automatically find a small $G_t$.
At the beginning of each iteration, the algorithm always trys to set $G_t=G_{t-1}/2$ as long as $G_{t-1}\geq 2$ 
($G_{t-1}$ is divided by~$4$ in Line~3 since it is always multiplied by~$2$ in Line~5).
Whenever~\eqref{eqn:Bregman-scaling} is not satisfied, $G_t$ is multiplied by~$2$.
When the inequality~\eqref{eqn:Bregman-scaling} is satisfied, $G_t$ is within a factor of~$2$ from its smallest possible value.
The following lemma guarantees that the gain-search loop always terminates within a small number of steps (see proof in Appendix~\ref{app:amd}).

\begin{lemma}\label{lemma:Gt-bound}
    If Assumption~\ref{asmp:rel-smooth-sc} holds, 
    then the inequality~\eqref{eqn:Bregman-scaling} holds with 
    $G_t=\kappa_\phi=L_\phi/\sigma_\phi$.
\end{lemma}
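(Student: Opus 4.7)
The plan is to show that both Bregman divergences can be sandwiched between the squared Euclidean norm (up to $L_\phi$ and $\sigma_\phi$), then reduce the inequality to a convexity-of-squared-norm statement.

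First I would expose the algebraic identity that links the three vectors appearing in~\eqref{eqn:Bregman-scaling}. From the update rule $x_{t+1} = (1-\alpha_t) x_t + \alpha_t v_{t+1}$ and the definition of $y_t$ in Line~8, which can be rewritten as $(1-\alpha_t\beta_t)y_t = (1-\alpha_t)x_t + \alpha_t(1-\beta_t)v_t$, one gets after substitution
\[
    x_{t+1} - y_t = \alpha_t\bigl((1-\beta_t)(v_{t+1}-v_t) + \beta_t (v_{t+1}-y_t)\bigr).
\]
Crucially, $\beta_t\in[0,1]$, because $B_{t+1} = B_t + a_{t+1}\sigma_{F/\phi} \geq a_{t+1}\sigma_{F/\phi}$ and $B_0=1>0$, so the right-hand side is a convex combination of $(v_{t+1}-v_t)$ and $(v_{t+1}-y_t)$ scaled by $\alpha_t$.

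Second, I would use Assumption~\ref{asmp:rel-smooth-sc} to translate Bregman divergences into squared norms. The $L_\phi$-smoothness of $\phi$ gives $D_\phi(x_{t+1}, y_t) \leq (L_\phi/2)\|x_{t+1}-y_t\|^2$, while $\sigma_\phi$-strong convexity gives $D_\phi(v_{t+1}, v_t) \geq (\sigma_\phi/2)\|v_{t+1}-v_t\|^2$ and $D_\phi(v_{t+1}, y_t) \geq (\sigma_\phi/2)\|v_{t+1}-y_t\|^2$. Plugging the identity above into the smoothness bound and dividing through by $L_\phi/2$, it suffices to prove
\[
    \bigl\|(1-\beta_t)(v_{t+1}-v_t) + \beta_t(v_{t+1}-y_t)\bigr\|^2 \leq (1-\beta_t)\|v_{t+1}-v_t\|^2 + \beta_t\|v_{t+1}-y_t\|^2,
\]
in order to obtain $D_\phi(x_{t+1},y_t)\leq \alpha_t^2(L_\phi/\sigma_\phi)[(1-\beta_t)D_\phi(v_{t+1},v_t)+\beta_t D_\phi(v_{t+1},y_t)]$, which is precisely~\eqref{eqn:Bregman-scaling} with $G_t=\kappa_\phi$.

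The displayed inequality is nothing but Jensen's inequality applied to the convex function $\|\cdot\|^2$ with weights $(1-\beta_t,\beta_t)$, so this last step is immediate. I do not anticipate a real obstacle: the only point that requires a quick sanity check is that $\beta_t\in[0,1]$ so that the convex-combination interpretation is legitimate, and this follows directly from the recursion defining $B_{t+1}$ with the initialization $B_0=1$.
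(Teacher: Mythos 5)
Your proof is correct and follows essentially the same route as the paper's: the identity $x_{t+1}-y_t=\alpha_t\bigl((1-\beta_t)(v_{t+1}-v_t)+\beta_t(v_{t+1}-y_t)\bigr)$, the two-sided bounds $\tfrac{\sigma_\phi}{2}\|x-y\|^2\leq D_\phi(x,y)\leq\tfrac{L_\phi}{2}\|x-y\|^2$, and convexity of $\|\cdot\|^2$ with weights $(1-\beta_t,\beta_t)$. The only cosmetic difference is that the paper phrases the norm bounds via a mean-value Hessian $H(x,y)$, and your explicit check that $\beta_t\in[0,1]$ is a detail the paper leaves implicit.
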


Therefore, if $\phi = (1/2)\| \cdot \|^2$, then we can set $G_t=1$ and there is no need to check~\eqref{eqn:Bregman-scaling}. 
In general, Algorithm~\ref{algo:spag} always produces $G_t<2\kappa_\phi$ for all $t\geq 0$. Following the argument from \citet[][Lemma~4]{Nesterov13composite}, the total number of gain-searches performed up to iteration~$t$ is bounded by
\[
    2(t+1) + \log_2(G_t),
\]
which also bounds the total number of gradient evaluations.
Thus the overhead is roughly twice as if there were no gain-search.
Next we present a convergence theorem for SPAG.

\begin{theorem}
\label{thm:spag}
Suppose Assumption~\ref{asmp:rel-smooth-sc} holds. 
Then the sequences generated by SPAG satisfy for all $t\geq 0$,
\begin{align*}
    \bigl(\Phi(x_t) - \Phi(x_*)\bigr) + \sigma_\rel D_{\phi}(x_*, v_t) 
    \leq \frac{1}{A_t} D_{\phi}(x_*, v_0),
\end{align*}
where 
$
A_t = \frac{1}{4\sigma_\rel} \left(\prod_{\tau = 0}^{t-1} \left(1 + \gamma_\tau\right) -  \prod_{\tau = 0}^{t-1} \left(1 - \gamma_\tau\right)\right)^{2}
$,
and $\gamma_t = \frac{1}{2\sqrt{\kappa_\rel G_t}}$.
\end{theorem}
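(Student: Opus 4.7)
The plan is to run a Lyapunov / estimate-sequence argument adapted to the Bregman setting. Define the potential
\[
\Psi_t = A_t\bigl(\Phi(x_t) - \Phi(x_*)\bigr) + B_t\, D_\phi(x_*, v_t),
\]
and aim to show $\Psi_{t+1} \leq \Psi_t$ for every $t \geq 0$. Since $A_0 = 0$ and $B_0 = 1$, telescoping yields $\Psi_t \leq \Psi_0 = D_\phi(x_*, v_0)$. The recursion $B_{t+1} = B_t + a_{t+1}\sigma_{F/\phi}$ with $B_0 = 1$ gives the invariant $B_t = 1 + A_t \sigma_{F/\phi} \geq A_t \sigma_{F/\phi}$, so dividing through by $A_t$ produces the claimed functional inequality, modulo a lower bound on $A_t$.

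To establish the per-iteration decrease I would combine four ingredients. First, relative smoothness of $F$ with respect to $\phi$ plus convexity of $F$, applied along the combination $x_{t+1} = (1-\alpha_t)x_t + \alpha_t v_{t+1}$, gives
$F(x_{t+1}) \leq (1-\alpha_t)F(x_t) + \alpha_t[F(y_t) + \nabla F(y_t)^\top(v_{t+1}-y_t)] + L_{F/\phi} D_\phi(x_{t+1}, y_t)$, and the triangle-scaling inequality~\eqref{eqn:Bregman-scaling} bounds $D_\phi(x_{t+1}, y_t) \leq \alpha_t^2 G_t[(1-\beta_t) D_\phi(v_{t+1}, v_t) + \beta_t D_\phi(v_{t+1}, y_t)]$. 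Second, convexity of $\psi$ gives $\psi(x_{t+1}) \leq (1-\alpha_t)\psi(x_t) + \alpha_t\psi(v_{t+1})$. Third, relative strong convexity of $F$ at $(x_*, y_t)$ and a subgradient inequality for $\psi$ at $v_{t+1}$ produce, for some $g_{t+1}\in\partial\psi(v_{t+1})$,
$F(y_t) + \nabla F(y_t)^\top(v_{t+1}-y_t) + \psi(v_{t+1}) \leq \Phi(x_*) - \sigma_{F/\phi}D_\phi(x_*, y_t) + (\nabla F(y_t) + g_{t+1})^\top(v_{t+1} - x_*)$. Fourth, the first-order optimality condition for $v_{t+1} = \arg\min V_t$ combined with the three-point identity for $D_\phi$ rewrites $\eta_t(\nabla F(y_t) + g_{t+1})^\top(v_{t+1} - x_*)$ as $(1-\beta_t)D_\phi(x_*, v_t) + \beta_t D_\phi(x_*, y_t) - D_\phi(x_*, v_{t+1}) - (1-\beta_t)D_\phi(v_{t+1}, v_t) - \beta_t D_\phi(v_{t+1}, y_t)$.

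Assembling these bounds and multiplying by $A_{t+1}$, the parameter identities $L_{F/\phi}\alpha_t^2 G_t = \alpha_t/\eta_t = B_{t+1}/A_{t+1}$ (which follow from the defining equation $a_{t+1}^2 L_{F/\phi}G_t = A_{t+1}B_{t+1}$) and $\beta_t B_{t+1}/A_{t+1} = \alpha_t\sigma_{F/\phi}$ annihilate the coefficients of $D_\phi(v_{t+1}, v_t)$, $D_\phi(v_{t+1}, y_t)$, and $D_\phi(x_*, y_t)$, while $A_{t+1}(1-\alpha_t) = A_t$, $A_{t+1}\alpha_t = a_{t+1}$, and $B_{t+1}(1-\beta_t) = B_t$ telescope the remaining terms into exactly $\Psi_{t+1} \leq \Psi_t$. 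For the closed-form lower bound on $A_t$, set $p_t = A_t \sigma_{F/\phi}$ so the recursion becomes $p_{t+1} - p_t = 2\gamma_t \sqrt{p_{t+1}(1+p_{t+1})}$ with $p_0 = 0$; I would then prove by induction that $p_t \geq (P_t - Q_t)^2/4$ for $P_t = \prod_{\tau<t}(1+\gamma_\tau)$ and $Q_t = \prod_{\tau<t}(1-\gamma_\tau)$, using the monotone dependence of $p_{t+1}$ on $p_t$ in the defining equation together with the identity $(P_t+Q_t)^2 - (P_t - Q_t)^2 = 4\prod_{\tau<t}(1-\gamma_\tau^2) \leq 4$ to control the factor $\sqrt{1+p_{t+1}}$. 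The main obstacle will be the bookkeeping in the per-iteration step: the entire argument hinges on the precise cancellations afforded by the parameter choices and the three-point identity, and a single misplaced sign or factor invalidates the whole derivation; the recursion step is a secondary but still subtle algebraic obstacle, since the stated formula is a clean lower bound rather than the exact solution.
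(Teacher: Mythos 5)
Your proposal is correct and follows essentially the same route as the paper: a potential argument on $A_t(\Phi(x_t)-\Phi(x_*))+B_tD_\phi(x_*,v_t)$, combining relative smoothness at $(x_{t+1},y_t)$ with the triangle-scaling inequality~\eqref{eqn:Bregman-scaling}, relative strong convexity at $(x_*,y_t)$, convexity of $\psi$, and the descent property of the Bregman proximal step (which you rederive from the optimality condition and the three-point identity, where the paper quotes it as Lemma~\ref{lemma:three-point-descent}), followed by the same Nesterov-style recursion $p_{t+1}-p_t=2\gamma_t\sqrt{p_{t+1}(1+p_{t+1})}$ for the closed-form bound on $A_t$. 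The parameter identities you list do produce exactly the cancellations claimed, so the bookkeeping goes through as in the paper's Appendix~\ref{app:amd}.
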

The proof of Theorem~\ref{thm:spag} relies on the techniques of \citet{nesterov2017efficiency}, and the details are given in Appendix~\ref{app:amd}.
We can estimate the convergence rate as follows:
\[
    \frac{1}{A_t} = O\biggl(\prod_{\tau=0}^t\biggl(1-\textstyle\frac{1}{\sqrt{\kappa_{F/\phi}G_\tau}}\biggr)\biggr)
    = O\biggl(\biggl(1-\textstyle\frac{1}{\sqrt{\kappa_{F/\phi}\widetilde{G}_t}}\biggr)^{\!\!t}\,\biggr),
\]
where $\widetilde{G}_t$ is such that 
$\widetilde{G}_t^{-1/2}=(1/t)\sum_{\tau=0}^t G_t^{-1/2}$, that is, $\widetilde{G}_t^{1/2}$ is the harmonic mean of $G_0^{1/2},\ldots,G_{t-1}^{1/2}$.
In addition, it can be shown that $A_t\geq t^2/(4L_{F/\phi}\widetilde{G}_t)$. 
Therefore, as $\sigma_{F/\phi}\to 0$, Theorem~\ref{thm:spag} gives
an accelerated sublinear rate:
\[
    \Phi(x_t)-\Phi(x_*) 
    \leq \frac{4L_{F/\phi}\widetilde{G_t}}{t^2} D_\phi(x_*,x_0).
\]
To estimate the worst case when $\sigma_{F/\phi}>0$, we replace $G_t$ by $\kappa_\phi$ to obtain the iteration complexity 
$O\bigl(\sqrt{\kappa_{F/\phi}\kappa_\phi}\log(1/\epsilon)\bigr)$.
Since $\kappa_{F/\phi}\kappa_\phi\approx \kappa_F$, 
this is roughly $O\bigl(\sqrt{\kappa_F}\log(1/\epsilon)\bigr)$,
the same as without preconditioning.
However, the next lemma shows that under a mild condition, we always have $G_t\to 1$ geometrically.

\begin{lemma}
\label{lemma:LS}
Suppose Assumption~\ref{asmp:rel-smooth-sc} holds and in addition, 
$\nabla^2 \phi$ is $M$-Lipschitz-continuous, i.e.,
for all $x,y \in \dom \psi$,
\[
    \left\| \nabla^2 \phi(x) - \nabla^2 \phi(y) \right\| \leq M \| x - y \|.
\]
Then the inequality~\eqref{eqn:Bregman-scaling} holds with 
\begin{align}\label{eq:G_t}
    G_t  = \min\bigl\{\kappa_{\phi}, ~1 + (M/\sigma_{\phi})d_t\bigr\},
\end{align}
where $d_t = \|v_{t+1} - v_t\| + \|v_{t+1} - y_t\| + \|x_{t+1} - y_t\|$.
\end{lemma}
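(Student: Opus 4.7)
The plan is to compare both sides of inequality~\eqref{eqn:Bregman-scaling} to their ``frozen-Hessian'' analogues at the reference point $y_t$, and control the perturbation using the Lipschitz bound on $\nabla^2\phi$ together with strong convexity of $\phi$. Set $H := \nabla^2\phi(y_t)$. A direct computation from Lines~8 and~11 of Algorithm~\ref{algo:spag} gives $x_{t+1}-y_t = \alpha_t\bar v$ with $\bar v := (1-\beta_t)(v_{t+1}-v_t) + \beta_t(v_{t+1}-y_t)$. Convexity of the quadratic form $u\mapsto u^\top Hu$ applied to this convex combination yields $\|\bar v\|_H^2 \leq (1-\beta_t)\|v_{t+1}-v_t\|_H^2 + \beta_t\|v_{t+1}-y_t\|_H^2$, so the frozen-Hessian version of~\eqref{eqn:Bregman-scaling} holds with constant~$1$:
\[
\tfrac12\|x_{t+1}-y_t\|_H^2 \leq \alpha_t^2\bigl(\tfrac{1-\beta_t}{2}\|v_{t+1}-v_t\|_H^2 + \tfrac{\beta_t}{2}\|v_{t+1}-y_t\|_H^2\bigr).
\]

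Next I linearize each Bregman divergence in~\eqref{eqn:Bregman-scaling} about $H$ by combining the integral identity $D_\phi(w,u)=\int_0^1(1-s)(w-u)^\top\nabla^2\phi(u+s(w-u))(w-u)\,ds$ with the Lipschitz estimate $\|\nabla^2\phi(z)-H\|\leq M\|z-y_t\|$. For $D_\phi(x_{t+1},y_t)$ and $D_\phi(v_{t+1},y_t)$ the integration segment emanates from $y_t$, so the offset at parameter $s$ is $s\|x_{t+1}-y_t\|$ and $s\|v_{t+1}-y_t\|$ respectively; using $\int_0^1 s(1-s)\,ds = 1/6$, this produces $D_\phi(x_{t+1},y_t) \leq \tfrac12\|x_{t+1}-y_t\|_H^2 + \tfrac{M}{6}\|x_{t+1}-y_t\|^3$ and $D_\phi(v_{t+1},y_t) \geq \tfrac12\|v_{t+1}-y_t\|_H^2 - \tfrac{M}{6}\|v_{t+1}-y_t\|^3$. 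For $D_\phi(v_{t+1},v_t)$ the segment does not pass through $y_t$, so the offset is at most $\|v_t-y_t\|+s\|v_{t+1}-v_t\|$, giving the lower bound $D_\phi(v_{t+1},v_t) \geq \tfrac12\|v_{t+1}-v_t\|_H^2 - \tfrac{M}{2}\|v_t-y_t\|\|v_{t+1}-v_t\|^2 - \tfrac{M}{6}\|v_{t+1}-v_t\|^3$.

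To assemble, I plug these three bounds into~\eqref{eqn:Bregman-scaling}. By the first paragraph the frozen-Hessian portions already satisfy the target inequality with ratio~$1$, so it suffices to majorize each additive cubic error term on the left by a factor $(G_t-1)$ times the frozen right-hand side. For this I use strong convexity $\sigma_\phi\|u\|^2\leq\|u\|_H^2$ to rewrite every cubic $\|\cdot\|^3 = \|\cdot\|\cdot\|\cdot\|^2$ as $(1/\sigma_\phi)\|\cdot\|$ times a quadratic $\|\cdot\|_H^2$, and the triangle inequality $\|v_t-y_t\|\leq\|v_{t+1}-v_t\|+\|v_{t+1}-y_t\|$ to eliminate the stale dependence on $v_t$ in the offset term. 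A routine accounting shows the resulting prefactors sum to a numerical constant times $(M/\sigma_\phi)\,d_t$, establishing $G_t \leq 1 + (M/\sigma_\phi)\,d_t$; combined with the unconditional bound $G_t \leq \kappa_\phi$ of Lemma~\ref{lemma:Gt-bound}, this proves~\eqref{eq:G_t}.

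The main obstacle is the offset contribution $\tfrac{M}{2}\|v_t-y_t\|\|v_{t+1}-v_t\|^2$ in the lower bound on $D_\phi(v_{t+1},v_t)$, which arises precisely because the segment $[v_t,v_{t+1}]$ does not pass through the linearization point $y_t$. One must expand $\|v_t-y_t\|$ via the triangle inequality and verify that the resulting mixed terms in $\|v_{t+1}-v_t\|$ and $\|v_{t+1}-y_t\|$, together with the remaining cubic errors from the other two divergences, recombine cleanly into the advertised $(M/\sigma_\phi)\,d_t$ form with the stated constant.
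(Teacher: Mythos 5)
Your overall strategy (a frozen-Hessian convexity step, a Lipschitz perturbation, and strong convexity to pass from Euclidean to $H$-weighted norms) is the right family of ideas, but the choice of linearization point creates a gap that the final ``routine accounting'' does not close. The paper freezes the Hessian not at $y_t$ but at a mean-value point of the segment $[y_t,x_{t+1}]$: it writes $D_\phi(x_{t+1},y_t)=\frac12\|x_{t+1}-y_t\|^2_{H}$ \emph{exactly} with $H=\nabla^2\phi(y_t+s(x_{t+1}-y_t))$, applies convexity of $u\mapsto\|u\|_H^2$ in that fixed metric, and then swaps $H$ for the mean-value Hessians $H'$ of the pairs $(v_{t+1},v_t)$ and $(v_{t+1},y_t)$. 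Each swap costs an additive $\|H-H'\|\,\|\cdot\|^2\le Md_t\|\cdot\|^2\le (Md_t/\sigma_\phi)\|\cdot\|^2_{H'}$, and since $\frac12\|\cdot\|^2_{H'}$ \emph{equals} the corresponding Bregman divergence, the errors recombine as an exact multiplicative factor $1+(M/\sigma_\phi)d_t$ on the true right-hand side. No lower bounds on the right-hand side divergences are ever needed.

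Your version requires an upper bound on the left and lower bounds on the right, so the perturbations are subtracted from the right-hand side and converting them into a factor $G_t$ forces a quotient of the form $(1+c_0)/(1-c_1)$. Tracking your coefficients: the offset term $\frac{M}{2}\|v_t-y_t\|\,\|v_{t+1}-v_t\|^2$ together with the cubic $\frac{M}{6}\|v_{t+1}-v_t\|^3$ gives $c_1=\frac{M}{\sigma_\phi}\bigl(\|v_t-y_t\|+\frac13\|v_{t+1}-v_t\|\bigr)$, and after the triangle inequality the coefficient of $\|v_{t+1}-v_t\|$ is already $4/3$, exceeding the budget of $1$ per term allowed by $d_t$; moreover $(1+c_0)/(1-c_1)>1+c_0+c_1$ strictly, so even with optimized constants the quotient overshoots $1+(M/\sigma_\phi)d_t$. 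Worse, when $c_1\ge 1$ your lower bound on $D_\phi(v_{t+1},v_t)$ becomes vacuous and the argument collapses entirely, a regime not excluded by the $\min$ with $\kappa_\phi$ (which can be large while $1+(M/\sigma_\phi)d_t$ is moderate). As written, the proposal proves at best $G_t=1+C(M/\sigma_\phi)d_t$ for some $C>1$ and only when $(M/\sigma_\phi)d_t$ is small; to obtain the stated constant unconditionally, linearize on the segment $[y_t,x_{t+1}]$ as the paper does, so that only one-sided perturbations toward the right-hand side Hessians are required.
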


In particular, if $\phi$ is quadratic, then we have $M=0$ and $G_t=1$ always satisfies~\eqref{eqn:Bregman-scaling}.
In this case, the convergence rate in Theorem~\ref{thm:spag} satisfies
$1/A_t=O\bigl(\bigl(1-1/\sqrt{\kappa_{F/\phi}}\bigr)^{\!t}\,\bigr)$.

In general, $M\neq 0$, but it can be shown that the sequences generated by
Algorithm~\ref{algo:spag}, $\{x_t\}$, $\{y_t\}$ and $\{v_t\}$ all converge
to $x_*$ at the rate $\bigl(1-1/\sqrt{\kappa_F}\bigr)^t$
\citep[see, e.g.,][Theorem~1]{LinXiao2015homotopy}.
As a result, $d_t\to 0$ and thus $G_t\to 1$ at the same rate.
Consequently, the convergence rate established in Theorem~\ref{thm:spag} 
quickly approaches 
$O\bigl(\bigl(1-1/\sqrt{\kappa_{F/\phi}}\bigr)^{\!t}\,\bigr)$.

\subsection{Implementation for Distributed Optimization}
\label{sec:impl-distr-erm}

In distributed optimization, Algorithm~\ref{algo:spag} is implemented at the server. During each iteration, communication between the server and the workers only happens when computing $\nabla F(y_t)$.
Checking if the inequality~\eqref{eqn:Bregman-scaling} holds locally requires that the server has access to the preconditioner~$\phi$.

If the datasets on different workers are i.i.d.~samples from the same source distribution, then we can use any $f_j$ in the definition of~$\phi$ in~\eqref{eqn:reference-def} and assign worker~$j$ as the server.
However, this is often not the case in practice and obtaining i.i.d.~datasets on different workers may involve expensive shuffling and exchanging large amount of data among the workers.
In this case, a better alternative is to randomly sample small portions of the data on each worker and send them to a dedicated server.
We call this sub-sampled dataset $\mathcal{D}_0$ and the local loss at the server $f_0$, which is defined the same way as in~\eqref{eqn:fj-def}.
Then the server implements Algorithm~\ref{algo:spag} with
$\phi(x)=f_0(x)+(\mu/2)\|x\|^2$.
Here we only need $\mathcal{D}_0$ be a uniform sub-sample of 
$\cup_{j=1}^m\mathcal{D}_j$, which is critical for effective preconditioning.
On the other hand, it is not a problem at all if the datasets at the workers,
 $\mathcal{D}_1,\ldots,\mathcal{D}_m$, are not shuffled to to be i.i.d., because it 
does not change the average gradients $\nabla F(y_t)$. 
In the rest of the paper, we omit the subscript to simply use~$f$ to represent the local empirical loss function.
As discussed in Section~\ref{sec:stat-precond}, if 
\begin{equation}\label{eqn:f0-Hessian-approx}
    \left\|\nabla^2 f(x) - \nabla^2 F(x)\right\| \leq \mu, 
   \quad \forall\, x\in\dom\psi
\end{equation}
with high probability, then according to~\eqref{eqn:rel-sigma-mu}, we can choose\[ 
    L_{F/\phi}=1, \qquad \sigma_{F/\phi} = \frac{\sigma_F}{\sigma_F+2\mu}
\]
as the input to Algorithm~\ref{algo:spag}.
In the next section, we leverage matrix concentration bounds to estimate how $\mu$ varies with the number of subsamples~$n$.
With sufficiently large~$n$, we can make $\mu$ small so that the relative condition number $\kappa_{F/\phi}=1+2\mu/\sigma_F$ is much smaller than $\kappa_F$.

\section{Bounding the Relative Condition Number}
\label{sec:concentration}

In this section, we derive refined matrix concentration bounds for linear prediction models.
Suppose the overall dataset consists of~$N$ samples $\{z_1,\ldots,z_N\}$, where each $z_i=(a_i,b_i)$ with $a_i\in\R^d$ being a feature vector and $b_i$ the corresponding label or regression target. 
Linear models (including logistic and ridge regression) have the form
$\ell(x, z_i) = \ell_i(a_i^\top x) + \frac{\lambda}{2}\|x\|^2$, 
where $\ell_i$ is twice differentiable and may depend on $b_i$, and $\lambda>0$. We further assume that $\ell^\second_i = \ell^\second_j$ for all $i$ and $j$, which is valid for logistic and ridge regression as well. Since $f(x) = (1/n)\sum_{i=1}^n\ell(x,z_i)$, we have
\begin{equation}\label{eqn:linear-hessian}
    \nabla^2 f(x) = \frac{1}{n}\sum_{i=1}^n \ell_i^{\prime\prime}(a_i^\top x) a_i a_i^\top + \lambda I_d .
\end{equation}
Here we omit the subscript~$j$ in $f_j$ since we only need one subsampled dataset at the server, as explained in Section~\ref{sec:impl-distr-erm}.
For the overall loss function defined in~\eqref{eqn:average-loss}, the Hessian $\nabla^2 F(x)$ is defined similarly by replacing~$n$ with~$N$.

We assume for simplicity that the strong convexity of~$F$ mainly comes from regularization, that is, $\sigma_F = \sigma_\ell = \lambda$, but the results can be easily extended to account for the strong convexity from data.
We start by showing tight results for quadratics, and then provide uniform concentration bounds of Hessians for more general loss functions. Finally, we give a refined bound when the~$a_i$'s are sub-Gaussian.

\subsection{Quadratic Case}
We assume in this section that  $\ell_i(a_i^\top x )=(a_i^\top x-b_i)^2/2$, 
and  that there exists a constant $R$ such that $\|a_i\|\leq R$ for all $i=1,\ldots,N$. In this case we have $L_\ell=R^2$ and $\kappa_\ell=R^2/\lambda$.
Since the Hessians do not depend on $x$, we use the notation
\[
    H_F=\nabla^2 F(x), \quad H_f=\nabla^2 f(x).
\]
Previous works 
\citep{shamir2014communication, reddi2016aide, yuan2019convergence} 
use the Hoeffding bound~\eqref{eqn:matrix-hoeffding} to obtain
\begin{equation}\label{eqn:rel-by-additive}
    \left(1+\frac{2\mu}{\lambda}\right)^{\!-1} (H_f+\mu I_d) \preceq H_F
    \preceq H_f + \mu I_d,
\end{equation}
\begin{equation}\label{eqn:mu-hoeffding}
 \text{ with \ \ \ \ \ \ }  \mu  = \frac{R^2}{\sqrt{n}}\sqrt{32\log(d/\delta)} .
\end{equation} 
Our result is given in the following theorem.
\begin{theorem}
\label{thm:concentration_quadratics}
Suppose $\ell_i$ is quadratic and $\|a_i\|\leq R$ for all~$i$.
For a fixed $\delta>0$, if $n > \frac{28}{3}\log\left(\frac{2d}{\delta}\right)$, then the following inequality holds with probability at least $1 - \delta$:
\begin{equation}\label{eqn:rel-by-mult}\!\!\!
\left(\frac{3}{2} + \frac{2 \mu}{\lambda}\right)^{\!-1} \!\!\left(H_f + \mu I_d \right) \preceq H_F \preceq 2 \left(H_f + \mu I_d \right),
\end{equation}
\begin{equation} \label{eqn:quadratic-mu}
\text{ with \ \ \ \ \ \ } \mu = \frac{1}{2}\left(\frac{28 R^2}{3n}\log\left(\frac{2d}{\delta}\right) - \lambda \right)^+.
\end{equation}
Thus, for this choice of $\mu$, $\sigma_\rel = \left(\frac{3}{2} + \frac{2 \mu}{\lambda}\right)^{-1}$, $L_\rel = 2$ and so $\kappa_\rel = O\left(1 + \frac{\kappa_\ell}{n}\log\left(\frac{d}{\delta}\right)\right)$ with probability $1 - \delta$.
\end{theorem}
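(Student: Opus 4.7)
Since $\ell_i$ is quadratic, $H_F$ and $H_f$ are constant in $x$, and~\eqref{eqn:rel-by-mult} is equivalent to a concentration estimate for $\Sigma_f - \Sigma_F$, where $\Sigma_f := (1/n)\sum_{i=1}^n a_i a_i^\top = H_f - \lambda I_d$ and $\Sigma_F := (1/N)\sum_{i=1}^N a_i a_i^\top = H_F - \lambda I_d$. The key idea is to obtain a \emph{multiplicative} matrix concentration by applying a whitened matrix Bernstein inequality, with the ridge $\mu$ from~\eqref{eqn:quadratic-mu} calibrated so that the effective regularization $\lambda' := \lambda + 2\mu = \max\{\lambda,\,(28R^2/3n)\log(2d/\delta)\}$ is exactly the threshold at which the tail bound succeeds with probability $1-\delta$.

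\textbf{Whitening.} I will define $P := (H_F + 2\mu I_d)^{-1/2}$ and $Z_i := P\, a_i a_i^\top \, P$, so that $(1/n)\sum_i Z_i = P \Sigma_f P$ and $\E[Z_i] = P\Sigma_F P = I_d - \lambda'(H_F + 2\mu I_d)^{-1} \preceq I_d$. Each $Z_i$ is positive semidefinite, and since $\|a_i\| \leq R$ and $H_F + 2\mu I_d \succeq \lambda' I_d$, we have $\lambda_{\max}(Z_i) = a_i^\top P^2 a_i \leq R^2/\lambda'$. The factorization $\E[Z_i^2] = \E[(a_i^\top P^2 a_i) \, P a_i a_i^\top P] \preceq (R^2/\lambda')\,\E[Z_i]$ gives a matrix variance $\|\sum_i \E[(Z_i - \E Z_i)^2]\| \leq nR^2/\lambda'$.

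\textbf{Bernstein and de-whitening.} The hypothesis $n > (28/3)\log(2d/\delta)$ forces $\lambda' \leq R^2$, so that $L := R^2/\lambda' \geq 1$ dominates both $\|Z_i - \E Z_i\|$ on the upper tail and $\|\E Z_i - Z_i\| \leq \|\E Z_i\| \leq 1 \leq L$ on the lower tail. Applying the matrix Bernstein inequality in both directions at deviation $t = n/2$ yields
\[
    \Pr\bigl(\|P\Sigma_f P - P\Sigma_F P\| \geq 1/2\bigr) \leq 2d \exp\bigl(-3n\lambda'/(28R^2)\bigr) \leq \delta,
\]
where the final inequality is exactly $n\lambda' \geq (28R^2/3)\log(2d/\delta)$, the defining property of $\mu$. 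Conjugating by $(H_F + 2\mu I_d)^{1/2}$ then gives, on an event of probability $\geq 1-\delta$,
\[
    -\tfrac{1}{2}(H_F + 2\mu I_d) \;\preceq\; \Sigma_f - \Sigma_F \;\preceq\; \tfrac{1}{2}(H_F + 2\mu I_d).
\]

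\textbf{Extracting~\eqref{eqn:rel-by-mult}.} Substituting $\Sigma_F = H_F - \lambda I_d$ and $\Sigma_f = H_f - \lambda I_d$, the lower half simplifies to $H_f \succeq \tfrac{1}{2}H_F - \mu I_d$, i.e., $H_F \preceq 2(H_f + \mu I_d)$. The upper half simplifies to $H_f + \mu I_d \preceq \tfrac{3}{2}H_F + 2\mu I_d$; combining with $2\mu I_d \preceq (2\mu/\lambda) H_F$ (which uses $H_F \succeq \lambda I_d$) yields $H_f + \mu I_d \preceq (3/2 + 2\mu/\lambda) H_F$. The final claim $\kappa_\rel = 2\cdot(3/2 + 2\mu/\lambda) = 3 + 4\mu/\lambda = O\bigl(1 + \kappa_\ell\log(d/\delta)/n\bigr)$ then follows from the explicit value of $\mu$. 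I expect the main obstacle to be pinning down the exact constant $28/3$: it comes from optimizing the Bernstein exponent at tolerance $1/2$ and requires $L \geq 1$ so that a single scalar simultaneously controls both tails, which is precisely what the hypothesis on $n$ guarantees.
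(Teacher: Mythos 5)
Your proof is correct and follows essentially the same route as the paper's: a multiplicative (whitened) matrix Bernstein bound, with the whitening matrix calibrated so that a relative deviation of $1/2$ holds with probability $1-\delta$. The paper parametrizes the whitening as $H_{\alpha,\beta}=\alpha H+\beta I_d$ and solves for $\alpha,\beta,t$ at the end, but with its final choices $tH_{\alpha,\beta}=\tfrac{1}{2}(H_F+2\mu I_d)$, which is exactly your $P^{-2}/2$, and the variance/range bounds and the resulting constant $28/3$ coincide.
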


Theorem~\ref{thm:concentration_quadratics} improves on the result in~\eqref{eqn:mu-hoeffding} by a factor of $\sqrt{n}$. The reason is that matrix inequality~\eqref{eqn:rel-by-additive} is derived from the additive bound $\|H_f-H_F\|\leq \mu$ \citep[e.g.,][]{shamir2014communication, yuan2019convergence}.
We derive the matrix inequality~\eqref{eqn:rel-by-mult} directly from a multiplicative bound using the matrix Bernstein inequality (see proof in Appendix~\ref{app:concentration_quad}).
Note that by using matrix Bernstein instead of matrix Hoeffding inequality~\citep{tropp2015introduction}, one can refine the bound for~$\mu$ in~\eqref{eqn:rel-by-additive} from $L_\ell/ \sqrt{n}$ to $\sqrt{L_\ell L_F / n}$, which can be as small as $L_\ell / n$ in the extreme case when all the $a_i$'s are orthogonal.
Our bound in~\eqref{eqn:quadratic-mu} states that $\mu=\widetilde{O}(L_\ell/n)$ in general for quadratic problems, leading to $\kappa_{F/\phi}=\widetilde{O}(1+\kappa_\ell/n)$.

\begin{remark}
\emph{
Theorem~\ref{thm:concentration_quadratics} is proved by assuming random sampling with replacement. In practice, we mostly use random sampling without replacement, which usually concentrates even more than with replacement~\citep{hoeffding1963probability}.
}
\end{remark}

\begin{remark}\label{remark:higher_mu}
\emph{
In terms of reducing $\kappa_{F/\phi}$, there is not much benefit to having $\mu<\lambda$. Indeed, higher values of $\mu$ regularize the inner problem of minimizing $V_t(x)$ in~\eqref{eqn:Vt-def}, because the condition number of $D_\phi(x,y)=D_f(x,y)+(\mu/2)\|x-y\|^2$ is $(L_f + \mu) / (\lambda + \mu)$. Increasing $\mu$ can thus lead to substantially easier subproblems when $\mu > \lambda$, which reduces the computation cost at the server, although this may sometimes affect the rate of convergence.
}
\end{remark}

\subsection{Non-quadratic Case}

For non-quadratic loss functions, we need $\nabla^2 f (x)$ to be a good approximation of $\nabla^2 F(x)$ for all iterations of the SPAG algorithm. It is tempting to argue that concentration only needs to hold for the iterates of SPAG, and a union bound would then give an extra $\log T$ factors for $T$ iterations. Yet this only works for one step since $x_t$ depends on the points chosen to build $f$ for $t > 0$, so the $\ell^\second(a_i^\top x_t)a_i a_i^\top$ are not independent for different $i$ (because of $x_t$). Therefore, the concentration bounds need to be written at points that do not depend on $f$. In order to achieve this, we restrict the optimization variable within a bounded convex set and prove uniform concentration of Hessians over the set.
Without loss of generality, we consider optimization problems constrained 
in $\cB(0, D)$, the ball of radius $D$ centered at 0.
Correspondingly, we set the nonsmooth regularization function as
$\psi(x)=0$ if $x\in\cB(0,D)$ and infinity otherwise.

If the radius~$D$ is small, it is then possible to leverage the quadratic bound by using the inequality
\begin{align*}
    &\|H_f(x) - H_F(x)\| ~\leq~  \|H_f(x) - H_f(y)\| \\
    &\qquad\qquad + \|H_f(y) - H_F(y)\| + \|H_F(x) - H_F(y)\|.
\end{align*}
Thus, under a Lipschitz-continuous Hessian assumption (which we have), only concentration at point $y$ matters. Yet, such bounding is only meaningful when $x$ is close to $y$, thus leading to the very small convergence radius of \citet[Theorem~13]{wang2019utilizing}, in which they use concentration at the optimal point $x_*$. Using this argument for several $y$'s that pave $\cB(0,D)$ leads to an extra $\sqrt{d}$ multiplicative factor since concentration needs to hold at exponentially (in $d$) many points, as discussed in Section~\ref{sec:stat-precond}. We take a different approach in this work, and proceed by directly bounding the supremum for all $x \in \cB(0,D)$, thus looking for the smallest $\mu$ that satisfies: 
\begin{equation}\label{eq:sup_bound}
  \sup_{x \in \mathcal{B}(0, D)} \| H_f(x) - H_F(x)  \|_{\rm op} \leq \mu.
\end{equation}
Equation~\eqref{eqn:rel-by-additive} can then be used with this specific $\mu$. We now introduce Assumption~\ref{assumption:sudakov}, which is for example verified for logistic regression with $B_\ell = 1/4$ and $M_\ell = 1$.

\begin{assumption}
\label{assumption:sudakov}
There exist $B_\ell$ and $M_\ell$ such that $\ell_i^\second$ is $M_\ell$-Lipschitz continuous and $0 \leq \ell_i^\second(a^\top x) \leq B_\ell$ almost surely for all $x \in \mathcal{B}(0, D)$.
\end{assumption}

\begin{theorem}
\label{thm:concentration_sudakov}
If $\ell_i$ satisfies Assumption~\ref{assumption:sudakov}, then Equation~\eqref{eq:sup_bound} is satisfied with probability at least $1 - \delta$ for 
$$ \mu = \sqrt{4 \pi} \frac{R^2}{\sqrt{n}}\left( B_\ell \left[2 + \sqrt{\frac{1}{2\pi}\log(\delta^{-1})}\right] + R M_\ell D\right).
$$
\end{theorem}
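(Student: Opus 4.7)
My plan is to control
\begin{equation*}
Z \,:=\, \sup_{x \in \mathcal{B}(0, D)} \|H_f(x) - H_F(x)\|_{\rm op}
= \sup_{\substack{x \in \mathcal{B}(0,D)\\\|v\|=1}}\Bigl|\tfrac{1}{n}\sum_{i=1}^n \bigl(\ell''(a_i^\top x)(a_i^\top v)^2 - \E[\ell''(a^\top x)(a^\top v)^2]\bigr)\Bigr|
\end{equation*}
by combining two ingredients: (i) a McDiarmid concentration of $Z$ around $\E[Z]$, which produces the $\sqrt{\log(1/\delta)}$ term in $\mu$; and (ii) a bound on $\E[Z]$ via symmetrization, passage Rademacher-to-Gaussian, and a Sudakov--Fernique comparison against a decoupled Gaussian process that is separable in $x$ and $v$.

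For the concentration step, each summand lies in $[0, B_\ell R^2]$ by Assumption~\ref{assumption:sudakov} and $\|a_i\|\leq R$, so swapping one sample $a_i$ changes $Z$ by at most $2 B_\ell R^2/n$ uniformly in $(x,v)$. The bounded-differences inequality then gives $Z \leq \E[Z] + B_\ell R^2\sqrt{2\log(1/\delta)/n}$ with probability at least $1-\delta$, which matches the term $\sqrt{4\pi}\,B_\ell R^2 n^{-1/2}\sqrt{\log(\delta^{-1})/(2\pi)}$ in the stated $\mu$ after pulling out the common $\sqrt{4\pi}$ prefactor.

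For the mean bound, standard symmetrization followed by the Rademacher-to-Gaussian comparison (which injects a $\sqrt{\pi/2}$ from $\E|g|=\sqrt{2/\pi}$) produces $\E[Z] \lesssim \sqrt{2\pi}\,\E\sup|W_{x,v}|$, where $W_{x,v} := \tfrac{1}{n}\sum_i g_i\, \ell''(a_i^\top x)(a_i^\top v)^2$ is Gaussian conditional on $(a_i)$. The elementary split $(A-B)^2 \leq 2(A-C)^2+2(C-B)^2$ with $C = \ell''(a_i^\top x)(a_i^\top v')^2$, combined with $\ell''\leq B_\ell$, the $M_\ell$-Lipschitzness of $\ell''$, and $|a_i^\top w|\leq R$ for $\|w\|\leq 1$, yields the uniform increment estimate
\begin{equation*}
\bigl[\ell''(a_i^\top x)(a_i^\top v)^2 - \ell''(a_i^\top x')(a_i^\top v')^2\bigr]^2
\,\leq\, 8\,B_\ell^2 R^2\,(a_i^\top(v-v'))^2 + 2\,M_\ell^2 R^4\,(a_i^\top(x-x'))^2.
\end{equation*}
This is precisely the increment variance of the decoupled Gaussian process
$U_{x,v} := (\sqrt{8}\,B_\ell R/n)\sum_i \xi_i (a_i^\top v) + (\sqrt{2}\,M_\ell R^2/n)\sum_i \eta_i (a_i^\top x)$ with independent standard Gaussians $\xi_i,\eta_i$, so Sudakov--Fernique yields $\E\sup W \leq \E\sup U$. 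The dominating process $U$ then splits into two independent linear Gaussian suprema, each bounded by the Cauchy--Schwarz estimate $\E\|\sum_i g_i a_i\|\leq (\sum_i \|a_i\|^2)^{1/2} \leq R\sqrt{n}$ (with an additional factor $D$ on the $x$-piece, since $\sup_{x\in\mathcal{B}(0,D)} x^\top y = D\|y\|$), giving $\E\sup U \lesssim R^2(2 B_\ell + M_\ell R D)/\sqrt{n}$. Multiplying by $\sqrt{2\pi}$ and adding the McDiarmid contribution reproduces the claimed $\mu$.

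The principal obstacle is the Gaussian comparison step: the process $W_{x,v}$ couples $x$ and $v$ through the shared factor $a_i$ in each summand, and a naive $\varepsilon$-net argument on $\mathcal{B}(0,D) \times S^{d-1}$ would reintroduce the $\sqrt{d}$ factor that this theorem is designed to remove (cf.\ the discussion around \eqref{eqn:rel-cond-non-quadratic}). The Sudakov--Fernique decoupling bypasses this by routing the joint supremum through a process that is \emph{separable} in $x$ and $v$, so that each piece reduces to the dimension-free linear Gaussian width $\E\|\sum_i g_i a_i\|$. A minor technical point is the handling of the absolute value in $\sup|W_{x,v}|$, which is controlled either by extending the index set with an auxiliary sign variable $\eta\in\{\pm 1\}$ and applying Sudakov--Fernique to $\eta W_{x,v}$, or at the cost of a harmless constant; this choice does not affect the structural form of $\mu$.
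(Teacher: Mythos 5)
Your proposal is correct and follows essentially the same route as the paper's proof: McDiarmid's bounded-differences inequality for the deviation term, Gaussian symmetrization, the product-increment bound leading to a decoupled linear Gaussian process, Sudakov--Fernique, and the Cauchy--Schwarz bound $\E\bigl\|\sum_i g_i a_i\bigr\| \leq R\sqrt{n}$; the paper merely packages the argument as a general lemma for products $\varphi_1(a_i^\top x)\varphi_2(a_i^\top y)$ and then instantiates $\varphi_1=\ell''$, $\varphi_2=\tfrac12(\cdot)^2$. Your explicit remark about the two-sided supremum (the operator norm requiring control of both signs) is a point the paper itself glosses over, and your handling of it is fine.
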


\begin{proof}[Sketch of proof]
The high probability bound on the supremum is obtained using Mc Diarmid inequality~\citep{boucheron2013concentration}. This requires a bound on its expectation, which is obtained using symmetrization and the Sudakov-Fernique Lemma~\citep{boucheron2013concentration}. The complete proof can be found in Appendix~\ref{app:concentration_gen}.
\end{proof}

The bound of Theorem~\ref{thm:concentration_sudakov} is relatively tight as long as $RM_\ell D < B_\ell \sqrt{\log(\delta^{-1})}$. Indeed, using the matrix Bernstein inequality for a fixed $x \in \cB(0, D)$ would yield $\mu = O\left(R \sqrt{L_F} B_\ell \log(d / \delta) / \sqrt{n}\right)$. Therefore, Theorem~\ref{thm:concentration_sudakov} is tight up to a factor $R / \sqrt{L_F}$ in this case.

\subsection{Sub-Gaussian Bound}
We show in this section that the bound of Theorem~\ref{thm:concentration_sudakov} can be improved under a stronger sub-Gaussian assumption on $a$.

\begin{definition}
\label{def:subgaussian}
The random variable $a \in \R^d$ is sub-Gaussian with parameter $\rho>0$ if one has for all $\epsilon > 0$, $x \in \cB(0, D)$:
\begin{equation}\label{eq:subgauss}
\dP(|a_i^\top x)|\ge \rho \epsilon)\le 2 e^{-\frac{\epsilon^2}{2\|x\|^2}}.
\end{equation}
\end{definition}

\begin{theorem}
\label{thm:concentration_subgaussian}
If $\ell_i$ satisfies Assumption~\ref{assumption:sudakov} and the $a_i$ are sub-Gaussian with constant $\rho$, then denoting $\tilde{B} = B_\ell / (M_\ell D)$, there exists $C > 0$ such that Equation~\eqref{eq:sup_bound} is satisfied with probability $1 - \delta$ for
\begin{equation*}
    \mu = C \frac{\rho^2 M_\ell D}{\sqrt{n}} (d + \log(\delta^{-1}))\left[\frac{\rho + \tilde{B}}{\sqrt{d}} + \frac{\rho + (R^2 \tilde{B})^{\frac{1}{3}}}{\sqrt{n}}\right].
\end{equation*}
\end{theorem}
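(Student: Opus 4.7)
The plan is to follow the same three-step template as the proof of Theorem~\ref{thm:concentration_sudakov}---reduce the operator-norm supremum to an empirical process on $\mathcal{T}=\mathcal{B}(0,D)\times S^{d-1}$, bound its expectation by symmetrization plus a chaining argument, then promote expectation to high probability by a concentration inequality---but to exploit sub-Gaussian tails of $a$ to sharpen the intermediate steps. The key idea is that sub-Gaussianity lets us replace the crude $\|a\|\le R$ bound (which costs a factor $\sqrt d$ through ball-packing or union bounds in Theorem~\ref{thm:concentration_sudakov}) by sub-exponential control of the quadratic forms $(a^\top u)^2$, so that the $\sqrt d$ factor gets absorbed into the milder $d+\log(\delta^{-1})$ prefactor seen in the statement.

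Concretely, I would write
\[
\|H_f(x)-H_F(x)\|_{\rm op}=\sup_{u\in S^{d-1}}\Big|\frac{1}{n}\sum_{i=1}^n X_i(x,u)\Big|,\qquad X_i(x,u)=\ell_i''(a_i^\top x)(a_i^\top u)^2-\E[\ell''(a^\top x)(a^\top u)^2],
\]
so the quantity to control is an empirical process on $\mathcal{T}$. Under $\rho$-sub-Gaussianity each $a^\top v$ with unit $v$ is $\rho$-sub-Gaussian, so $(a^\top u)^2$ is sub-exponential with parameter $O(\rho^2)$, and each $X_i(x,u)$ inherits a sub-exponential norm. The increments split as $M_\ell |a^\top(x_1-x_2)|\,(a^\top u)^2 + B_\ell |a^\top(u_1-u_2)|\cdot|a^\top(u_1+u_2)|$, giving two sub-exponential scales on $\mathcal{T}$ with diameters of order $\rho^2 M_\ell D$ and $\rho^2 B_\ell$. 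Since $\log N(\mathcal{T},\epsilon) = O(d\log(1/\epsilon))$, a Talagrand-style generic-chaining argument in the sub-exponential regime produces an expectation bound of the form $\E[\sup|\cdot|]\lesssim \rho^2 M_\ell D\sqrt{d/n}(1+\tilde B+\cdots)$, with both scales of variation coexisting in the chaining integral.

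To lift this to the stated high-probability bound I would apply Talagrand's inequality for suprema of (unbounded) empirical processes, or equivalently truncate each $X_i$ at a level $\tau$, run McDiarmid on the truncated process, and control the discarded tail directly via sub-exponential estimates. Optimizing $\tau$ balances the truncated variance against the residual tail mass and is the step that produces the unusual cube-root term $(R^2\tilde B)^{1/3}$: it comes from a one-dimensional optimization coupling the two diameters of $\mathcal{T}$, which is why $R$ and $\tilde B$ enter with fractional exponents while $\rho$ remains linear. Collecting powers and factoring out $\rho^2 M_\ell D(d+\log(\delta^{-1}))/\sqrt n$ yields exactly the stated bracket $[(\rho+\tilde B)/\sqrt d + (\rho+(R^2\tilde B)^{1/3})/\sqrt n]$.

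The main obstacle is the chaining step. Because the summands are sub-exponential rather than sub-Gaussian, vanilla Dudley is suboptimal and forces an extra $\sqrt d$ factor which would defeat the improvement over Theorem~\ref{thm:concentration_sudakov}; one must run a two-scale generic-chaining argument that treats short-range and long-range increments with Bernstein-type bounds, and carefully track which direction of $\mathcal{T}$ ($x$ or $u$) dominates at each scale. Keeping the three constants $B_\ell$, $M_\ell$, $\rho$ separate throughout the chain, then optimizing the truncation threshold, is the most delicate bookkeeping and is what produces the mixed exponents appearing in the final bound.
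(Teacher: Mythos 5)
Your plan shares the core architecture of the paper's proof: view the Hessian deviation as an empirical process over $\cB(0,D)\times S^{d-1}$, chain over $\epsilon$-nets of cardinality $e^{O(d\log(1/\epsilon))}$, and let sub-Gaussianity of $a$ replace the worst-case $\|a\|\le R$ bound so that the dimension enters only through the $d+\log(\delta^{-1})$ prefactor. The main divergence is in how the high-probability statement is obtained. You propose to first bound $\E\sup$ by generic chaining and then concentrate via Talagrand's inequality or a truncate-and-McDiarmid argument; the paper instead proves a general theorem (Theorem~\ref{thm:subgaussian_gen}) for suprema of $\frac1n\sum_i\prod_{k=1}^r\varphi_k(a_i^\top x_k)$ in which the Chernoff bound is applied \emph{inside} the chain, link by link, with a union bound over net points and chain levels; the budget $\gamma+2\ln(j+1)$ at level $j$ is what places $d$ and $\log(\delta^{-1})$ in the same bracket. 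A plain McDiarmid step would reintroduce the deviation $R^2B_\ell\sqrt{\log(1/\delta)}/\sqrt n$ that Theorem~\ref{thm:concentration_sudakov} already gives, so your truncation would have to be tuned to a $\rho$-dependent level and tracked through the chain --- at which point you have essentially reconstructed the paper's argument.

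Two points in your sketch need more care. First, the increments in the $x$-direction, $M_\ell|a^\top(x_1-x_2)|(a^\top u)^2$, are products of \emph{three} sub-Gaussian variables, hence have stretched-exponential tails $e^{-t^{2/3}}$ rather than sub-exponential ones; calling both scales ``sub-exponential'' hides the fact that a pure sub-Weibull$(2/3)$ chaining would give a $(d+\log(1/\delta))^{3/2}$ dependence in the $1/n$ term. The paper avoids this by exploiting that the increments are also almost surely bounded by $P_{j,k}\asymp\min\{B/\epsilon_j,\,(B/B_k)R\}$, restricting the MGF parameter to $\theta/n\lesssim(\rho^{-r}P_{j,k})^{2/r-1}$; with $r=3$ this is exactly where the exponent $1-2/r=1/3$, hence the $(R^2\tilde B)^{1/3}/n$ term, comes from --- a Bernstein-type domain restriction rather than an optimized truncation threshold, though the two are close in spirit. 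Second, the product-telescoping chaining starts from $\Pi_0(x)=0$ and requires $\varphi_k(0)=0$, so the constant part $\ell''(0)\,a a^\top$ must be split off and handled as a separate $r=2$ process; this decomposition $H-\E H=M(x)+N$ is what produces the $\tilde B$ terms in the final bracket, and it is absent from your outline.
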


\begin{proof}[Sketch of proof]
This bound is a specific instantiation of a more general result based on chaining, which is a standard argument for proving results on suprema of empirical processes~\citep{boucheron2013concentration}. The complete proof can be found in Appendix~\ref{app:concentration_subgaussian}.
\end{proof}

The sub-Gaussian assumption \eqref{eq:subgauss} always holds with $\rho=R$, the almost sure bound on $\|a_i\|$. However Theorem~\ref{thm:concentration_subgaussian} improves over Theorem~\ref{thm:concentration_sudakov} only with a stronger sub-Gaussian assumption, i.e., when $\rho<R$. In particular for $a_i$ uniform over $\cB(0,R)$, one has $\rho=R/\sqrt{d}$. Assuming further that the $(R^2B)^{1/3} / \sqrt{n}$ term dominates yields $\mu=O(R^2 (R^2 B)^{1/3}/n)$, a $\sqrt{n}$ improvement over Theorem~\ref{thm:concentration_sudakov}. We expect tighter versions of Theorem~\ref{thm:concentration_subgaussian}, involving the effective dimension $d_{\rm eff}$ of vectors $a_i$ instead of the full dimension $d$, to hold. 

\section{Experiments}
\label{sec:experiments}

We have seen in the previous section that preconditioned gradient methods can outperform gradient descent by a large margin in terms of communication rounds, which was already observed empirically~\citep{shamir2014communication, reddi2016aide, yuan2019convergence}. We compare in this section the performances of SPAG with those of DANE and its heavy-ball acceleration, HB-DANE~\citep{yuan2019convergence}, as well as accelerated gradient descent (AGD). For this, we use two datasets from LibSVM\footnote{Accessible at \url{https://www.csie.ntu.edu.tw/~cjlin/libsvmtools/datasets/binary.html}}, RCV1~\cite{lewis2004rcv1} and the preprocessed version of KDD2010 (algebra)~\cite{yu2010feature}. Due to its better convergence guarantees~\citep{shamir2014communication, yuan2019convergence}, DANE refers in this section to the proximal gradient method with the Bregman divergence associated to $\phi = f_1 + (\mu / 2)\| \cdot \|^2$ (without averaging over $m$ workers). We use SPAG with $\sigma_\rel = 1 / (1 + 2\mu / \lambda)$ and HB-DANE with $\beta = (1 - (1 + 2\mu / \lambda)^{-1/2})^2$. Fine tuning these parameters only leads to comparable small improvements for both algorithms. We tune both the learning rate and the momentum of AGD. 

Note that, as mentioned in Section~\ref{sec:impl-distr-erm}, the number of nodes used by SPAG does not affect its iteration complexity (but change the parallelism of computing $\nabla F(x_t)$). Only the size $n$ of the dataset used for preconditioning matters. We initialize all algorithms at the same point, which is the minimizer of the server's entire local loss (regardless of how many samples are used for preconditioning). 

\textbf{Tuning $\mu$.} Although $\mu$ can be estimated using concentrations results, as done in Section~\ref{sec:concentration}, these bounds are too loose to be used in practice. Yet, they show that $\mu$ depends very weakly on $\lambda$. This is verified experimentally, and we therefore use the same value for $\mu$ regardless of $\lambda$. To test the impact of $\mu$ on the iteration complexity, we fix a step-size of~$1$ and plot the convergence speed of SPAG for several values of $\mu$. We see on Figure~\ref{fig:mu_comparison} that the value of $\mu$ drastically affects convergence, actually playing a role similar to the inverse of a step-size. Indeed, the smaller the $\mu$ the faster the convergence, up to a point at which the algorithm is not stable anymore. Convergence could be obtained for smaller values of $\mu$ by taking a smaller step-size. Yet, the step-size needs to be tuned for each value of $\mu$, and we observed that this does not lead to significant improvements in practice. Thus, we stick to the guidelines for DANE by~\citet{shamir2014communication}, i.e., we choose $L_\rel = 1$ and tune $\mu$.

\textbf{Line search for $G_t$.}
As explained in Section~\ref{sec:spag}, the optimal $G_t$ is obtained through a line search. Yet, we observed in all our experiments that $G_t = 1$ most of the time. This is due to the fact that 
we start at the minimizer of the local cost function, which can be close to the global solution.
In addition, Equation~\eqref{eq:G_t} can actually be verified for $G_t < 1$, even in the quadratic. Therefore, the line search generally has no added cost (apart from checking that $G_t = 1$ works) and the effective rate in our experiments is $\kappa_\rel^{-1/2}$. Experiments displayed in Figure~\ref{fig:allfig} use $G_t = 1$ for simplicity.

\begin{figure}[t]
\begin{subfigure}[normal]{0.5\linewidth}
    \includegraphics[width=\linewidth]{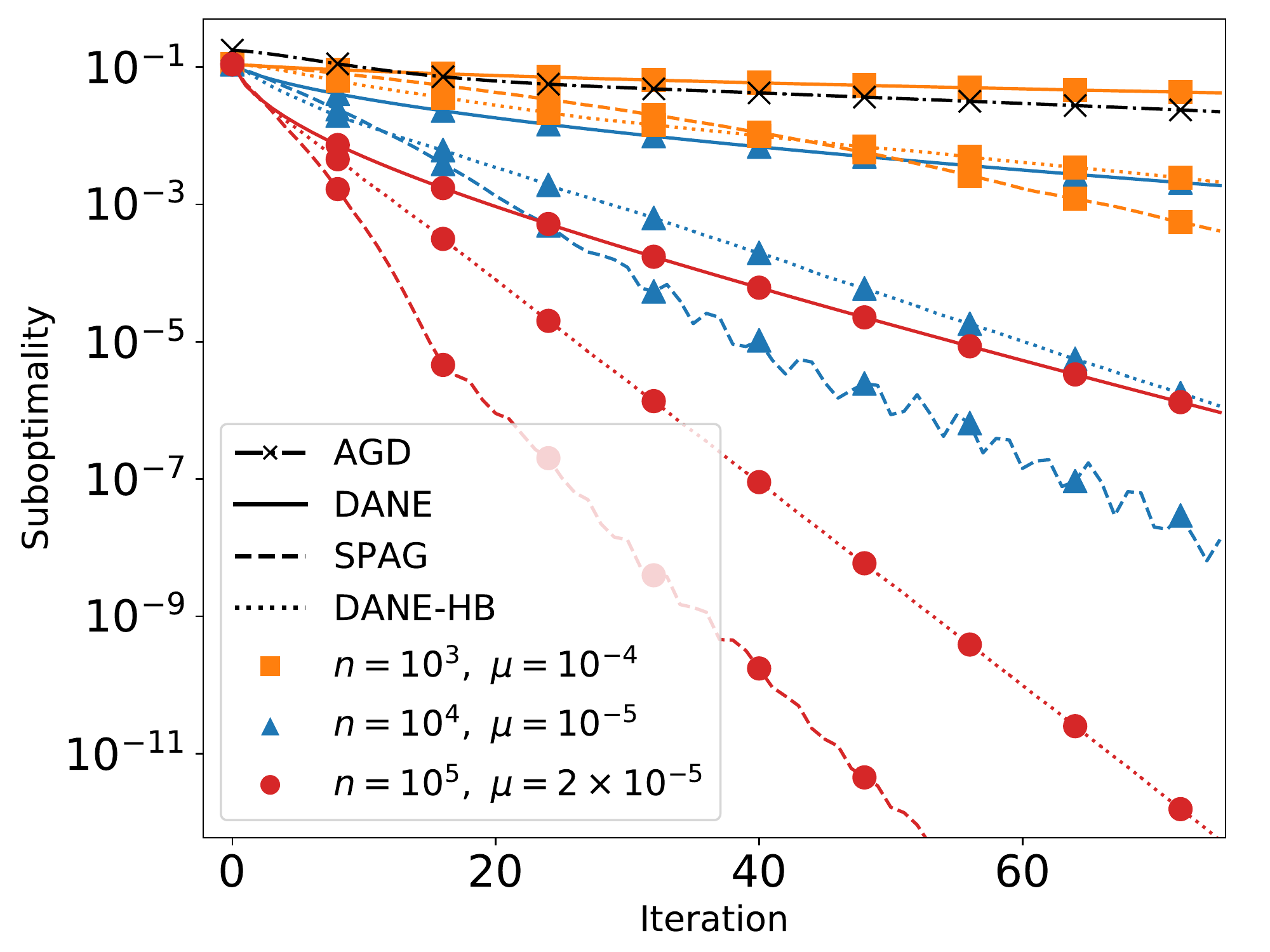}
    \caption{RCV1, $\lambda = 10^{-7}$}
    \label{fig:exp_rcv1_low_reg}
\end{subfigure}
\begin{subfigure}[normal]{0.5\linewidth}
    \includegraphics[width=\linewidth]{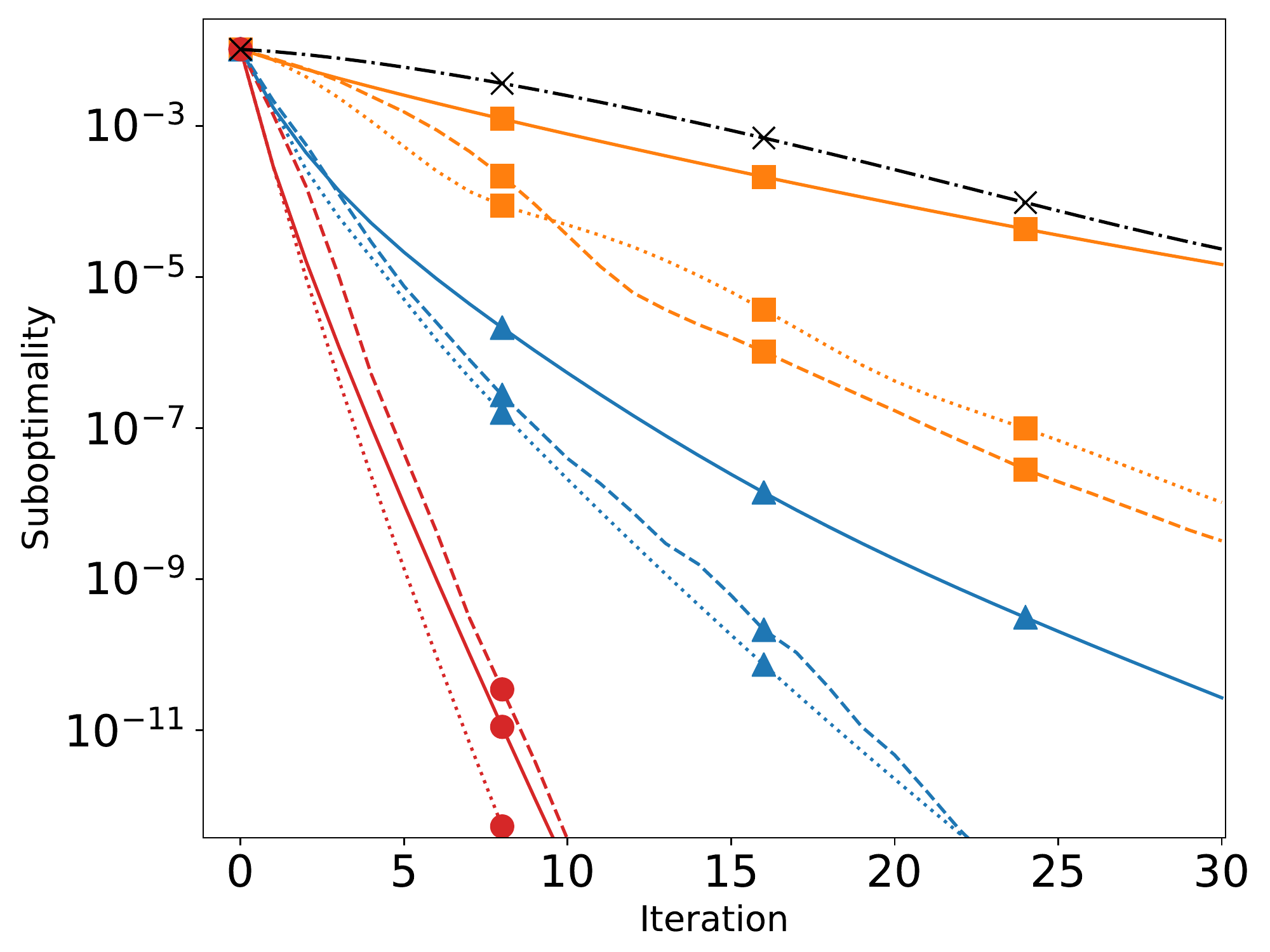}
    \caption{RCV1, $\lambda = 10^{-5}$}
    \label{fig:exp_rcv1_high_reg}
\end{subfigure}\\
\begin{subfigure}[normal]{0.5\linewidth}
    \includegraphics[width=\linewidth]{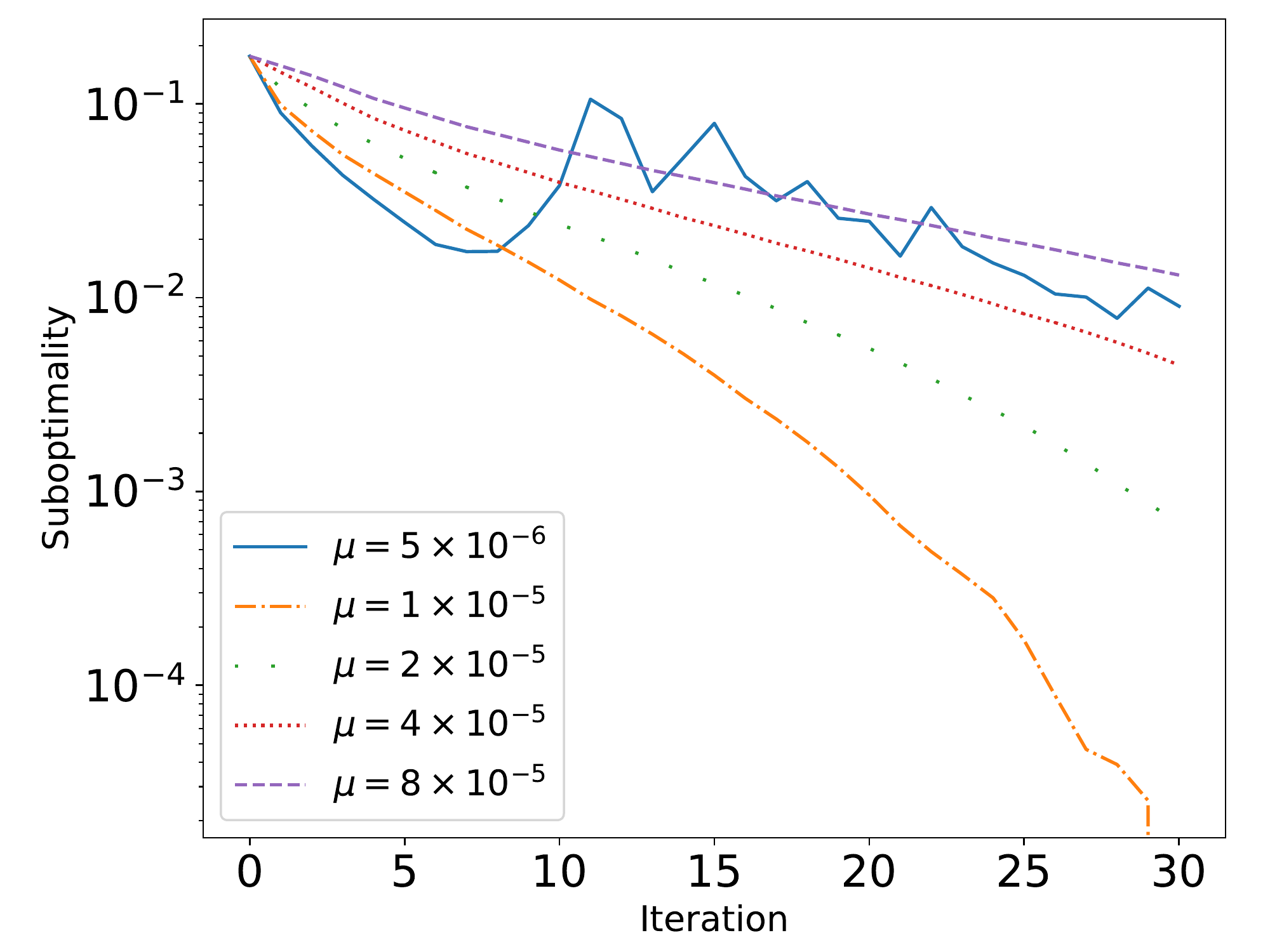}
    \caption{Effect of $\mu$ on the convergence speed of SPAG\\ on RCV1 with $\lambda = 10^{-7}$ and $n=10^4$.}
    \label{fig:mu_comparison}
\end{subfigure}
\begin{subfigure}[normal]{0.5\linewidth}
    \includegraphics[width=\linewidth]{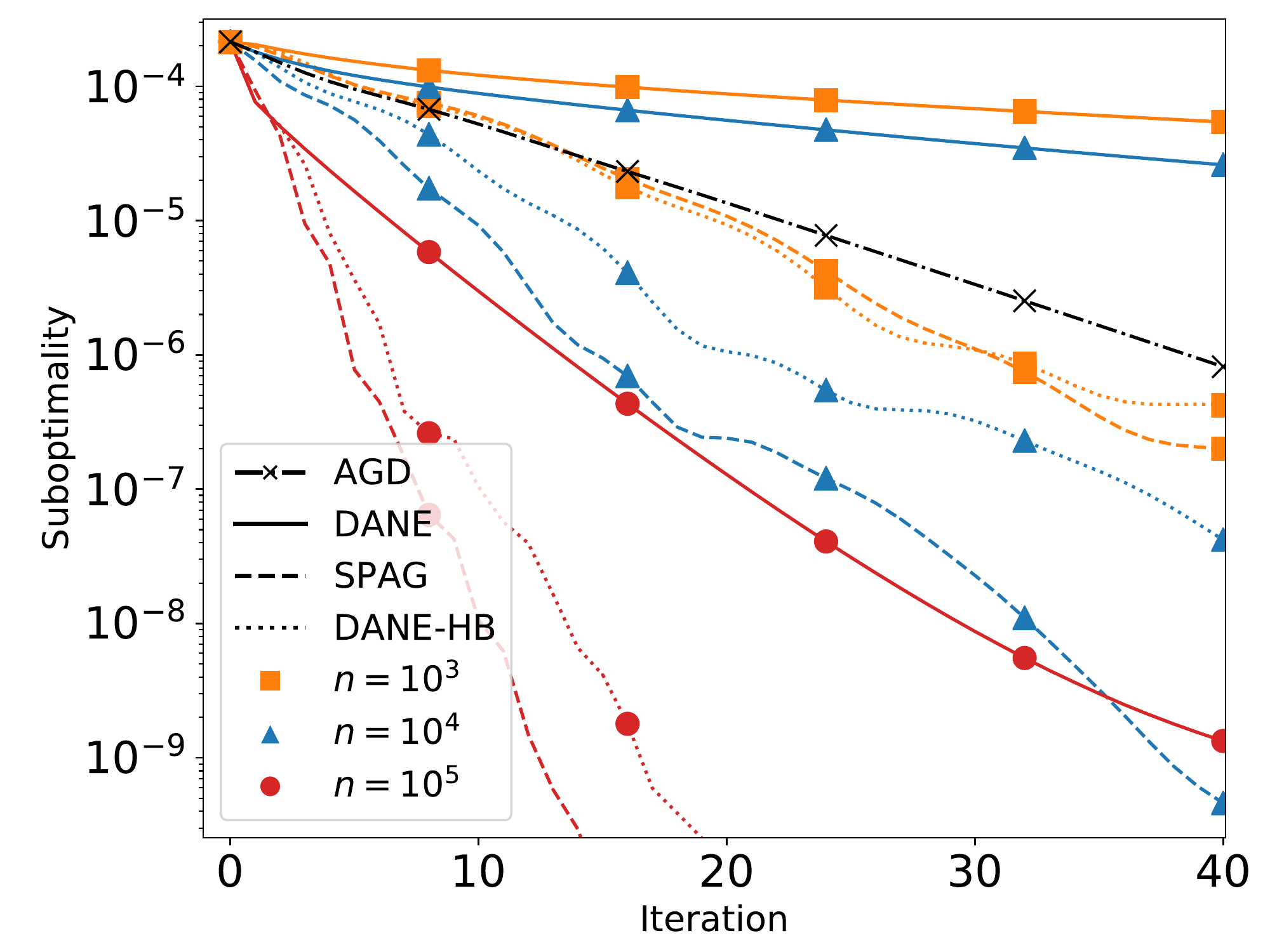}
    \caption{KDD2010, $\lambda=10^{-7}$. We use $\mu = 0.1 / (2n)$, except for $n=10^3$, where $\mu=10^{-5}$, and $L_\rel = 2$ for SPAG.}
    \label{fig:exp_kdd2010}
\end{subfigure}
\caption{Experimental results} \label{fig:allfig}
\end{figure}

\textbf{RCV1.}
Figures~\ref{fig:exp_rcv1_high_reg} and~\ref{fig:exp_rcv1_low_reg} present results for the RCV1 dataset with different regularizations. All algorithms are run with $N = 677 399$ (split over 4 nodes) and $d=47236$. We see that in Figure~\ref{fig:exp_rcv1_high_reg}, the curves can be clustered by values of~$n$, meaning that when regularization is relatively high ($\lambda=10^{-5}$), increasing the preconditioning sample size has a greater effect than acceleration since the problem is already well-conditioned. In particular, acceleration does not improve the convergence rate when $n=10^5$ and $\lambda=10^{-5}$. When regularization is smaller ($\lambda = 10^{-7}$), SPAG and HB-DANE outperform DANE even when ten times less samples are used for preconditioning, as shown in Figure~\ref{fig:exp_rcv1_low_reg}. Finer tuning (without using the theoretical parameters) of the momentum marginally improves the performances of SPAG and HB-DANE, at the cost of a grid search. 
SPAG generally outperforms HB-DANE in our experiments, but both methods have comparable asymptotic rates.

\textbf{KDD2010.} Figure~\ref{fig:exp_kdd2010} presents the results of larger scale experiments on a random subset of the KDD2010 dataset with $N=7557074$ (split over 80 nodes), $d=20216830$ and $\lambda = 10^{-7}$. The conclusions are similar to the experiments on RCV1,  i.e., acceleration allows to use significantly less samples at the server for a given convergence speed. AGD competes with DANE when $\lambda$ and $n$ are small, but it is outperformed by SPAG in all our experiments. More experiments investigating the impact of line search, tuning and inaccurate local solutions are presented in Appendix~\ref{app:experiments}. 

\section{Conclusion}
We have introduced SPAG, an accelerated algorithm that performs statistical preconditioning for large-scale optimization. 
Although our motivation in this paper is for distributed empirical risk minimization, SPAG applies to much more general settings.
We have given tight bounds on the relative condition number, a crucial quantity to understand the convergence rate of preconditioned algorithms. We have also shown, both in theory and in experiments, that acceleration allows SPAG to efficiently leverage rough preconditioning with limited number of local samples. Preliminary experiments suggest that SPAG is more robust to inaccurate solution of the inner problems than HB-DANE. Characterizing the effects of inaccurate inner solutions in the preconditioning setting would be an interesting extension of this work.

\section{Acknowledgements}

This work was funded in part by the French government under management of Agence Nationale de la Recherche as part of the ``Investissements d'avenir'' program, reference ANR-19-P3IA-0001 (PRAIRIE 3IA Institute). We also acknowledge support from the European Research Council (grant SEQUOIA 724063) and from the MSR-INRIA joint centre. 

\bibliographystyle{plainnat}
\bibliography{biblio}

\newpage 

\appendix

\centerline{\raisebox{-4ex}{\huge Appendix}}
\vspace{2ex}

\section{Convergence Analysis of SPAG}
\label{app:amd}
This section provides proofs for Lemma~\ref{lemma:Gt-bound}, Theorem~\ref{thm:spag} and Lemma~\ref{lemma:LS} presented in Section~\ref{sec:spag}.
Before getting to the proofs, we first comment on the nature of the accelerated convergence rate obtained in Theorem~\ref{thm:spag}.

Note that SPAG (Algorithm~\ref{algo:spag}) can be considered as an accelerated variant of the general mirror descent method considered by \citet{bauschke2017descent} and \citet{lu2018relatively}. 
Specifically, we can replace $D_\phi$ by the Bregman divergence of any convex function of Legendre type \citep[][Section~26]{Rockafellar70book}. 
Recently, \citet{dragomir2019optimal} show that fully accelerated convergence rates, as those for Euclidean mirror-maps achieved by \citet{Nesterov04book}, may not be attainable in the general setting. However, this negative result does not prevent us from obtaining better accelerated rates in the preconditioned setting. Indeed, we choose a smooth and strongly convex mirror map and further assume Lipschitz continuity of its Hessian. For smooth and strongly convex cost functions, the convergence rates of SPAG are almost always better than those obtained by standard accelerated algorithms (without preconditioning) as long as $n$ is not too small, and can be much better with a good preconditioner.

\subsection{Proof of Lemma~\ref{lemma:Gt-bound}}
Using the second-order Taylor expansion (mean-value theorem), we have
\[
    D_\phi(x,y) = \phi(x) - \phi(y) - \langle\nabla \phi(y), x-y\rangle
    = \frac{1}{2}(x-y)^\top\nabla^2 \phi\bigl(y+t(x-y)\bigr)(x-y),
\]
for some scalar $t\in[0,1]$.
We define
\[
    H(x,y) =\nabla^2 \phi\bigl(y+t(x-y)\bigr),
\]
where the dependence on $t\in[0,1]$ is made implicit with the ordered pair 
$(x,y)$. Then we can write
\[
    D_\phi(x,y) = \frac{1}{2} \|x-y\|^2_{H(x,y)}.
\]
By Assumption~\ref{asmp:rel-smooth-sc}, $\phi$ is $L_\phi$-smooth and $\sigma_\phi$-strongly convex, which implies that for all $x,y\in\R^d$,
\[
\sigma_\phi\|x-y\|^2 \leq \|x-y\|^2_{H(x,y)} \leq L_\phi\|x-y\|^2 .
\]
Let $w_t=(1-\beta_t)v_t + \beta_t y_t$. Then we have $x_{t+1}-y_t=\alpha_t\bigl(v_{t+1}-w_t\bigr)$ and
\begin{align*}
    D_\phi(x_{t+1}, y_t)&= \frac{1}{2}\|x_{t+1} - y_t\|^2_{H(x_{t+1},y_t)} \\
    &\leq \frac{L_\phi}{2}\|x_{t+1} - y_t\|^2 =  \alpha_t^2 \frac{L_\phi}{2}\|v_{t+1} - w_t\|^2.
\end{align*}
Next we use $v_{t+1}-w_t=(1-\beta_t)(v_{t+1}-v_t)+\beta_t(v_{t+1}-y_t)$ and convexity of $\|\cdot\|^2$ to obtain
\begin{align*}
    D_\phi(x_{t+1}, y_t)
    &\leq \alpha_t^2\frac{L_\phi}{2}\left( (1 - \beta_t)\|v_{t+1} - v_t\|^2 + \beta_t\|v_{t+1} - y_t\|^2\right)\\
    &\leq \alpha_t^2\frac{L_\phi}{2\sigma_\phi}\left( (1 - \beta_t)\|v_{t+1} - v_t\|^2_{H(v_{t+1},v_t)} + \beta_t\|v_{t+1} - y_t\|^2_{H(v_{t+1},y_t)}\right)\\
    &= \alpha_t^2 \kappa_\phi \bigl((1 - \beta_t)D_\phi(v_{t+1}, v_t) + \beta_t D_\phi(v_{t+1}, y_t)\bigr).
\end{align*}
This finishes the proof of Lemma~\ref{lemma:Gt-bound}.

\subsection{Proof of Theorem~\ref{thm:spag}}

Theorem~2 is a direct consequence of the following result, which is adapted from  \citet{nesterov2017efficiency}.

\begin{theorem}[Smooth and strongly convex mirror map $\phi$]
\label{thm:amd_convergence}
Suppose Assumption~\ref{asmp:rel-smooth-sc} holds. Then the sequences generated by Algorithm~\ref{algo:spag} satisfy for all $t\geq 0$,
\[
    A_t \bigl(\Phi(x_t) - \Phi(x_*)\bigr) + B_t D(x_*, v_t) 
    \leq A_0\bigl(F(x_0) - F(x_*)\bigr) + B_0 D(x_*, v_0).
\]
Moreover, if we set $A_0 = 0$ and $B_0 = 1$ then for $t\geq 0$,
$$
A_t \geq \frac{1}{4 \sigma_\rel}\left[\pi_t^+ - \pi_t^-\right]^2,\qquad
B_t = 1 + \sigma_\rel A_t \geq \frac{1}{4}\left[\pi_t^+ + \pi_t^-\right]^2,
$$
where
$$
\pi_t^+ = \prod_{i=0}^{t-1} \left(1 + \sqrt{\frac{\sigma_\rel}{L_\rel G_t}}\right), \qquad
\pi_t^- = \prod_{i=0}^{t-1} \left(1 - \sqrt{\frac{\sigma_\rel}{L_\rel G_t}}\right).
$$
\end{theorem}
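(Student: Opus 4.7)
The overall plan splits naturally into two parts: a Lyapunov-function argument for the one-step potential decrease, and an inductive analysis of the recursion that defines $A_t, B_t$.

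For the descent step, define $\Psi_t := A_t(\Phi(x_t) - \Phi(x_*)) + B_t D_\phi(x_*, v_t)$ and aim to prove $\Psi_{t+1} \leq \Psi_t$ for all $t$; telescoping this with $A_0 = 0$, $B_0 = 1$ yields the stated bound. A convenient preliminary is the reformulation $y_t = (1-\alpha_t)x_t + \alpha_t w_t$ with $w_t := (1-\beta_t)v_t + \beta_t y_t$, which follows by rearranging the definition of $y_t$ in Line~8 of Algorithm~\ref{algo:spag}. Combining (a)~relative smoothness of $F$ at $y_t$ to upper bound $F(x_{t+1})$, (b)~convexity of $\psi$ along $x_{t+1} = (1-\alpha_t)x_t + \alpha_t v_{t+1}$, and (c)~relative strong convexity of $F$ at $y_t$ evaluated at $x_*$, the three gradient-weighted terms $A_t\langle \nabla F(y_t), y_t - x_t\rangle$, $a_{t+1}\langle \nabla F(y_t), y_t - x_*\rangle$, and $a_{t+1}\langle \nabla F(y_t), v_{t+1} - w_t\rangle$ collapse into the single term $a_{t+1}\langle \nabla F(y_t), v_{t+1} - x_*\rangle$, thanks to the identity $A_{t+1}y_t = A_t x_t + a_{t+1}w_t$ (a direct consequence of $A_{t+1}\alpha_t = a_{t+1}$ and the reformulation above).

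I would then apply the first-order optimality condition for $v_{t+1} = \arg\min V_t$, namely $\eta_t(\nabla F(y_t) + g_{t+1}) + (1-\beta_t)(\nabla\phi(v_{t+1}) - \nabla\phi(v_t)) + \beta_t(\nabla\phi(v_{t+1}) - \nabla\phi(y_t)) = 0$ for some $g_{t+1} \in \partial\psi(v_{t+1})$. Taking inner product with $v_{t+1} - x_*$, applying the three-point identity $\langle \nabla\phi(u) - \nabla\phi(w), z - u\rangle = D_\phi(z, w) - D_\phi(z, u) - D_\phi(u, w)$ twice, and multiplying through by $a_{t+1}/\eta_t = B_{t+1}$ (using $B_{t+1}(1-\beta_t) = B_t$ and $B_{t+1}\beta_t = a_{t+1}\sigma_{F/\phi}$) rewrites $a_{t+1}\langle\nabla F(y_t) + g_{t+1}, v_{t+1} - x_*\rangle$ in terms of Bregman divergences involving $x_*$, $v_t$, $y_t$, and $v_{t+1}$. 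Massive cancellation then reduces $\Psi_{t+1} \leq \Psi_t$ to the single inequality $A_{t+1}L_{F/\phi}D_\phi(x_{t+1}, y_t) \leq B_{t+1}\bigl((1-\beta_t)D_\phi(v_{t+1}, v_t) + \beta_t D_\phi(v_{t+1}, y_t)\bigr)$, which holds exactly by combining the line-search condition~\eqref{eqn:Bregman-scaling} with $\alpha_t = a_{t+1}/A_{t+1}$ and the defining relation $a_{t+1}^2 L_{F/\phi}G_t = A_{t+1}B_{t+1}$.

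For the lower bounds on $A_t$ and $B_t$, first note that subtracting the two recursions gives $B_{t+1} - \sigma_{F/\phi}A_{t+1} = B_t - \sigma_{F/\phi}A_t$, which telescopes to $B_t = 1 + \sigma_{F/\phi}A_t$ for all $t$. Setting $r_t := \sqrt{\sigma_{F/\phi}A_t}$ and $s_t := \sqrt{B_t}$ yields the conservation law $s_t^2 - r_t^2 = 1$, and the defining equation reduces to $s_{t+1}^2 - s_t^2 = r_{t+1}^2 - r_t^2 = q_t\, r_{t+1}s_{t+1}$ with $q_t := 1/\sqrt{\kappa_{F/\phi}G_t}$. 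The target bounds $r_t \geq (\pi_t^+ - \pi_t^-)/2$ and $s_t \geq (\pi_t^+ + \pi_t^-)/2$ then follow by induction on~$t$: assuming the bounds at step~$t$, eliminate $s_{t+1}$ using $s_{t+1}^2 - r_{t+1}^2 = 1$ to obtain a single quadratic in $r_{t+1}^2$, and use monotonicity of the resulting map in the inputs $(s_t, r_t)$ to reduce the induction step to an elementary algebraic verification.

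The main obstacle is the delicate algebraic bookkeeping in the potential-decrease step: the cancellations only go through because of the tight coupling of the parameters via $A_{t+1}\alpha_t = a_{t+1}$, $B_{t+1}\beta_t = a_{t+1}\sigma_{F/\phi}$, $\eta_t = a_{t+1}/B_{t+1}$, and $a_{t+1}^2 L_{F/\phi}G_t = A_{t+1}B_{t+1}$. Missing the identity $y_t = (1-\alpha_t)x_t + \alpha_t w_t$ would obscure why the three gradient-weighted terms collapse into one, and without combining the line-search condition with the defining equation at the very last step the reduction would not close.
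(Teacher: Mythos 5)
Your argument for the potential decrease $\Psi_{t+1}\le\Psi_t$ is correct and is essentially the paper's proof: the paper packages your ``optimality condition plus three-point identity'' step as a separate lemma (the descent property of the Bregman proximal point), but otherwise uses exactly the same ingredients — the identity $A_{t+1}y_t=A_tx_t+a_{t+1}w_t$ with $w_t=(1-\beta_t)v_t+\beta_t y_t$ to collapse the gradient terms, relative strong convexity at $(x_*,y_t)$, relative smoothness at $(x_{t+1},y_t)$, convexity of $\psi$ along $x_{t+1}=(1-\alpha_t)x_t+\alpha_t v_{t+1}$, and the combination of the line-search condition~\eqref{eqn:Bregman-scaling} with $a_{t+1}^2L_\rel G_t=A_{t+1}B_{t+1}$ to absorb $A_{t+1}L_\rel D_\phi(x_{t+1},y_t)$. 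No issues there.

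For the estimates on $A_t$ and $B_t$ (which the paper only attributes to Nesterov without proof), your setup is right — $B_t=1+\sigma_\rel A_t$, and with $r_t=\sqrt{\sigma_\rel A_t}$, $s_t=\sqrt{B_t}$ the recursion reads $r_{t+1}^2-r_t^2=s_{t+1}^2-s_t^2=q_t\,r_{t+1}s_{t+1}$ with $q_t=1/\sqrt{\kappa_\rel G_t}$ — but the induction as you describe it will not close with the constants in the theorem statement. The step that actually works is $r_{t+1}-r_t=\frac{r_{t+1}^2-r_t^2}{r_{t+1}+r_t}\ge\frac{q_t r_{t+1}s_{t+1}}{2r_{t+1}}=\frac{q_t}{2}s_{t+1}\ge\frac{q_t}{2}s_t$, and symmetrically $s_{t+1}\ge s_t+\frac{q_t}{2}r_t$; the coupled induction then yields $r_t\ge\frac12(\pi_t^+-\pi_t^-)$ and $s_t\ge\frac12(\pi_t^++\pi_t^-)$ \emph{only when $\pi_t^\pm$ are built from $\gamma_t=q_t/2=\frac{1}{2\sqrt{\kappa_\rel G_t}}$}, i.e.\ the constants of Theorem~\ref{thm:spag}, not the $\sqrt{\sigma_\rel/(L_\rel G_t)}$ appearing in $\pi_t^\pm$ here. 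With the full $q_t$ the ``elementary algebraic verification'' reduces to $\hat s_t(1-q_t^2)\le q_t\hat r_t$, which is false (e.g.\ at $t=1$), and indeed the stated bound eventually exceeds $A_t$ itself: for constant $G_t=1$ and $q=1/2$ one computes $A_3\approx 10.53$ while $\frac{1}{4\sigma_\rel}(\pi_3^+-\pi_3^-)^2\approx 10.56$, and asymptotically the claimed bound grows like $(1+q)^{2t}$ whereas $A_t$ grows like $(1-q)^{-t}<(1+q)^{2t}$. So you should either prove the version with $\gamma_t=q_t/2$ (consistent with Theorem~\ref{thm:spag}) or flag the factor-of-two discrepancy; as written, the last step of your plan would fail.
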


We first state an equivalent definition of relative smoothness and relative strong convexity
\citep{lu2018relatively}.
The function~$F$ is said to be $L_{F/\phi}$-smooth and 
$\sigma_{F/\phi}$-strongly convex with respect to~$\phi$ if for all $x, y\in\R^d$,
\begin{align}
\label{eqn:rel-ssc-approx}
F(y)+\nabla F(y)^\top(x-y) + \sigma_{L/\phi}D_\phi(x,y)
    ~\leq~ F(x) ~\leq~ 
F(y)+\nabla F(y)^\top(x-y) + L_{L/\phi}D_\phi(x,y) .
\end{align}
Obviously this is the same as~\eqref{eqn:rel-ssc-div}.
We also need the following lemma, which is an extension of a result from \citet[Lemma~3.2]{ChenTeboulle93},
whose proof we omit.
\begin{lemma}[Descent property of Bregman proximal point]
    \label{lemma:three-point-descent}
    Suppose $g$ is a convex function defined over $\dom\phi$ and 
    \[
        v_{t+1} = \argmin_{x} \bigl\{ g(x) + (1-\beta_t)D_\phi(x, v_t)+\beta_t D_\phi(x,y_t) \bigr\},
    \]
    then for any $x\in\dom h$, 
    \[
    g(v_{t+1}) + (1-\beta_t)D_\phi(v_{t+1}, v_t)+\beta_t D_\phi(v_{t+1},y_t)
    \leq g(x) + (1-\beta_t)D_\phi(x, v_t)+\beta_t D_\phi(x,y_t) - D_\phi(x,v_{t+1}).
    \]
\end{lemma}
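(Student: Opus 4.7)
The plan is to follow the Chen–Teboulle template, extended to accommodate the convex combination of two Bregman anchor points $v_t$ and $y_t$ instead of a single one. The proof is essentially a first-order optimality condition combined with the three-point Bregman identity applied twice.

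First, I would write down the optimality condition for the minimization defining $v_{t+1}$. Since $D_\phi(\cdot,u)$ is differentiable in its first argument with gradient $\nabla\phi(\cdot)-\nabla\phi(u)$, and since $v_{t+1}$ minimizes $g(x)+(1-\beta_t)D_\phi(x,v_t)+\beta_t D_\phi(x,y_t)$, Fermat's rule gives a subgradient $s\in\partial g(v_{t+1})$ with
\[
s+(1-\beta_t)\bigl(\nabla\phi(v_{t+1})-\nabla\phi(v_t)\bigr)+\beta_t\bigl(\nabla\phi(v_{t+1})-\nabla\phi(y_t)\bigr)=0.
\]
Convexity of $g$ then yields, for any $x\in\dom\phi$,
\[
g(v_{t+1})-g(x)\leq s^{\top}(v_{t+1}-x)=(1-\beta_t)\bigl(\nabla\phi(v_{t+1})-\nabla\phi(v_t)\bigr)^{\top}(x-v_{t+1})+\beta_t\bigl(\nabla\phi(v_{t+1})-\nabla\phi(y_t)\bigr)^{\top}(x-v_{t+1}).
\]

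Second, I would invoke the standard three-point identity for Bregman divergences, which follows directly from expanding the definition: for any $a,b,c\in\dom\phi$,
\[
\bigl(\nabla\phi(b)-\nabla\phi(c)\bigr)^{\top}(a-b)=D_\phi(a,c)-D_\phi(a,b)-D_\phi(b,c).
\]
Apply this with $(a,b,c)=(x,v_{t+1},v_t)$ and $(a,b,c)=(x,v_{t+1},y_t)$ to rewrite each of the two inner products above purely in terms of Bregman divergences.

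Third, substitute and collect. The two resulting occurrences of $-D_\phi(x,v_{t+1})$ are weighted by $(1-\beta_t)$ and $\beta_t$, whose sum is one, so they collapse into a single $-D_\phi(x,v_{t+1})$. Grouping the remaining terms puts $g(v_{t+1})+(1-\beta_t)D_\phi(v_{t+1},v_t)+\beta_t D_\phi(v_{t+1},y_t)$ on the left and $g(x)+(1-\beta_t)D_\phi(x,v_t)+\beta_t D_\phi(x,y_t)-D_\phi(x,v_{t+1})$ on the right, which is exactly the claim.

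The argument is essentially mechanical once the two ingredients (optimality condition and three-point identity) are in place; no real obstacle arises. The one point requiring a line of justification is the existence of the subgradient $s$ when $g$ is merely convex (not necessarily smooth): this follows from the subdifferential sum rule since the two Bregman terms are differentiable, so any minimizer satisfies $0\in\partial g(v_{t+1})+\nabla_x\bigl[(1-\beta_t)D_\phi(\cdot,v_t)+\beta_t D_\phi(\cdot,y_t)\bigr](v_{t+1})$. Everything else is bookkeeping with the convex weights $(1-\beta_t)$ and $\beta_t$.
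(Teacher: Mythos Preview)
Your proposal is correct and follows exactly the standard Chen--Teboulle argument that the paper explicitly invokes; in fact the paper omits the proof entirely, simply citing \citet[Lemma~3.2]{ChenTeboulle93} and noting that the lemma is an extension to two anchor points. Your extension---applying the three-point identity once for each anchor and using $(1-\beta_t)+\beta_t=1$ to collapse the $-D_\phi(x,v_{t+1})$ terms---is precisely the intended one.
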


\begin{proof}[Proof of Theorem~\ref{thm:amd_convergence}.]
The proof follows the same lines as~\citet{nesterov2017efficiency}, with adaptations to use general Bregman divergences. 
Applying Lemma~\ref{lemma:three-point-descent} with 
$g(x)=\eta_t\bigl(\nabla f(y_t)^\top x+\psi(x)\bigr)$,
we have for any $x\in\dom\phi$,
\begin{align*}
& D(x, v_{t+1}) + (1 - \beta_t)D(v_{t+1} , v_t) + \beta_t D(v_{t+1}, y_t) - (1-\beta_t) D(x, v_t) - \beta_t D(x, y_t) \\
\leq~&  \eta_t \nabla f(y_t)^\top (x - v_{t+1}) + \eta_t\bigl(\psi(x) - \psi(v_{t+1})\bigr)
.
\end{align*}
Since by definition $\eta_t = \frac{a_{t+1}}{B_{t+1}}$, 
multiplying both sides of the above inequality by $B_{t+1}$ yields
\begin{align*}
B_{t+1} D(x, v_{t+1}) &+ B_{t+1}\bigl((1 - \beta_t)D(v_{t+1} , v_t) + \beta_t D(v_{t+1}, y_t)\bigr) - B_{t+1}(1-\beta_t) D(x, v_t) \\ 
&- B_{t+1}\beta_t D(x, y_t) \leq  a_{t+1} \nabla f(y_t)^\top (x - v_{t+1}) + a_{t+1}\bigl(\psi(x) - \psi(v_{t+1})\bigr).
\end{align*}
Using the scaling property~\eqref{eqn:Bregman-scaling} and the relationships
$\alpha_t=\frac{a_{t+1}}{A_{t+1}}$ and
$a_{t+1}^2 L_{f/\phi} G_t = A_{t+1} B_{t+1}$, we obtain
\begin{align*}
B_{t+1}\bigl((1 - \beta_t)D(v_{t+1} , v_t) + \beta_t D(v_{t+1}, y_t)\bigr) 
&\geq\frac{B_{t+1}}{\alpha_t^2 G_t} D(x_{t+1},y_t) \\
&=\frac{A_{t+1}^2 B_{t+1}}{a_{t+1}^2 G_t} D(x_{t+1},y_t)
=A_{t+1} L_{f/\phi} D(x_{t+1},y_t) .
\end{align*}
Combining the last two inequalities and using the facts 
$B_{t+1}(1-\beta_t)=B_t$ and $B_{t+1}\beta_t=a_{t+1}\sigma_{f/\phi}$, 
we arrive at
\begin{align}
    & B_{t+1} D(x, v_{t+1}) + A_{t+1}L_{f/\phi}D(x_{t+1}, y_t) - B_t D(x, v_t) - a_{t+1}\sigma_{f/\phi} D(x, y_t) \nonumber \\
    \leq~&  a_{t+1} \nabla f(y_t)^\top (x - v_{t+1}) + a_{t+1}\bigl(\psi(x) - \psi(v_{t+1})\bigr) .
    \label{eqn:reduced-expansion}
\end{align}

We then expand the gradient term on the right-hand side of~\eqref{eqn:reduced-expansion} into two parts: 
\begin{align}\label{eqn:inner-prod-expansion}
    a_{t+1} \nabla f(y_t)^\top (x - v_{t+1}) 
    = a_{t+1} \nabla f(y_t)^\top (x - w_t)
    + a_{t+1} \nabla f(y_t)^\top (w_t - v_{t+1}) ,
\end{align}
where $w_t=(1-\beta_t)v_t + \beta_t y_t$.
For the first part,
\begin{align}
    a_{t+1}\nabla f(y_t)^\top(x - w_t) &= a_{t+1}\nabla f(y_t)^\top(x - y_t) + \frac{a_{t+1}(1 - \alpha_t)}{\alpha_t}\nabla f(y_t)^\top(x_t - y_t) \nonumber \\
    &\leq a_{t+1}\left(f(x) - f(y_t) - \sigma_{f / \phi} D(x, y_t)\right) + \frac{a_{t+1}(1 - \alpha_t)}{\alpha_t}\left(f(x_t) - f(y_t)\right).
    \label{eqn:inner-prod-1}
\end{align}
Notice that 
\[
    a_{t+1} \frac{1-\alpha_t}{\alpha_t} 
    = a_{t+1}\left(\frac{1}{\alpha_t}-1\right)
        = a_{t+1}\left(\frac{A_{t+1}}{a_{t+1}}-1\right)
        = A_{t+1}-a_{t+1} = A_t .
\]
Therefore, Equation~\eqref{eqn:inner-prod-1} becomes
\begin{equation}
\label{eq:strong_convexity}
    a_{t+1} \nabla f(y_t)^\top(x - w_t) \leq a_{t+1} f(x) - A_{t+1}f(y_t) + A_t f(x_t) - a_{t+1} \sigma_{f/\phi} D(x, y_t).
\end{equation}
For the second part on the right-hand side of~\eqref{eqn:inner-prod-expansion},
\begin{align}
    & a_{t+1}\nabla f(y_t)^\top(w_t -v_{t+1}) 
    = - \frac{a_{t+1}}{\alpha_t}\nabla f(y_t)^\top(x_{t+1} - y_t) 
    = - A_{t+1} \nabla f(y_t)^\top(x_{t+1} - y_t)  \nonumber \\
    \leq~&  - A_{t+1} \left( f(x_{t+1})-f(y_t) - L_{f/\phi} D(x_{t+1}, y_t)\right),
    \label{eqn:inner-prod-2}
\end{align}
where in the last inequality we used the relative smoothness assumption in~\eqref{eqn:rel-ssc-approx}. 

Summing the inequalities~\eqref{eqn:reduced-expansion}, \eqref{eqn:inner-prod-1}
and~\eqref{eqn:inner-prod-2}, we have
\begin{align*}
    B_{t+1}D(x, v_{t+1}) - B_t D(x, v_t) 
     & \leq  a_{t+1} f(x) - A_{t+1} f(x_{t+1}) + A_t f(x_t) 
    + a_{t+1} (\psi(v_{t+1}) - \psi(x)) \\
    & \leq - A_{t+1} \bigl(f(x_{t+1})-f(x)\bigr) + A_t \bigl(f(x_t)-f(x)\bigr) 
    + a_{t+1} \bigl(\psi(x) - \psi(v_{t+1})\bigr),
\end{align*}
which is the same as
\begin{align}
\label{eqn:f-D-psi}
  A_{t+1} \bigl(f(x_{t+1})-f(x)\bigr) +  B_{t+1}D(x, v_{t+1}) 
  \leq A_t \bigl(f(x_t)-f(x)\bigr) +  B_t D(x, v_t) 
    + a_{t+1} \bigl(\psi(x) - \psi(v_{t+1})\bigr).
\end{align}

Finally we consider the term
$a_{t+1} \bigl(\psi(x) - \psi(v_{t+1})\bigr)$.
Using $x_{t+1} = (1-\alpha_t) x_t + \alpha_t v_{t+1}$ and convexity of $\psi$,
we have
\[
    \psi(x_{t+1}) \leq (1-\alpha_t)\psi(x_t) + \alpha_t \psi(v_{t+1}).
\]
Since by definition $\alpha_t=\frac{a_{t+1}}{A_{t+1}}$ 
and $(1-\alpha_t)=\frac{A_t}{A_{t+1}}$, the above inequality is equivalent to
\[
    A_{t+1} \psi(x_{t+1}) \leq A_t \psi(x_t) + a_{t+1} \psi(v_{t+1}),
\]
which implies (using $A_{t+1} = A_t+a_{t+1}$) that for any $x\in\dom\phi$,
\begin{align}\label{eqn:psi-combination}
    A_{t+1} \bigl(\psi(x_{t+1})-\psi(x)\bigr) 
    \leq A_t \bigl(\psi(x_t)-\psi(x)\bigr) + a_{t+1} \bigl(\psi(v_{t+1})-\psi(x)\bigr).
\end{align}
Summing the inequalities~\eqref{eqn:f-D-psi} and~\eqref{eqn:psi-combination} and
using $\Phi=f+\psi$, we have
\begin{align*}
  A_{t+1} \bigl(\Phi(x_{t+1})-\Phi(x)\bigr) +  B_{t+1}D(x, v_{t+1}) 
  \leq A_t \bigl(\Phi(x_t)-\Phi(x)\bigr) +  B_t D(x, v_t) .
\end{align*}
This can then be unrolled, 
and we obtain the desired result by setting $x=x_*$.

\smallskip

Finally, the estimates of $A_t$ and $B_t$ follow from a direct adaptation of the techniques in~\citep{nesterov2017efficiency}. The only difference is the use of time-varying $\gamma_t=\sqrt{\sigma_{F/\phi}/(L_{F/\phi}G_t)}$ instead of a constant~$\gamma=\sqrt{\sigma_{F/\phi}/L_{F/\phi}}$, which does not impact the derivations. 
\end{proof}

\subsection{Proof of Lemma~\ref{lemma:LS}}

The analysis in Lemma~\ref{lemma:Gt-bound} is very pessimistic, since we use uniform lower and upper bounds for the Hessian of~$\phi$, whereas what we actually want is to bound is the differences between Hessians. If the Hessian is well-behaved (typically Lipschitz, or if $\phi$ is self-concordant), we can prove Lemma~\ref{lemma:LS}, which leads to a finer asymptotic convergence rate.

We start with the local quadratic representation of Bregman divergence:
\begin{align*}
    D_\phi(x_{t+1}, y_t) 
     &= \frac{1}{2}\|x_{t+1} - y_t\|^2_{H(x_{t+1},y_t)} = \frac{\alpha_t^2}{2}\|v_{t+1} - w_t\|^2_{H(x_{t+1},y_t)}\\
     &\leq \frac{\alpha_t^2}{2}\left( (1 - \beta_t)\|v_{t+1} - v_t\|^2_{H(x_{t+1},y_t)} + \beta_t\|v_{t+1} - y_t\|^2_{H(x_{t+1},y_t)} \right)\\
     &\leq \frac{\alpha_t^2}{2}\left((1 - \beta_t)\|v_{t+1} - v_t\|^2_{H(v_{t+1},v_t)} + \beta_t\|v_{t+1} - y_t\|^2_{H(v_{t+1},y_t)} \right)\\
     &\quad + \frac{\alpha_t^2}{2} (1 - \beta_t)\|H(x_{t+1},y_t)-H(v_{t+1},v_t)\|\cdot \|v_{t+1} - v_t\|^2 \\
     &\quad + \frac{\alpha_t^2}{2}\beta_t\|H(x_{t+1},y_t)-H(v_{t+1},y_t)\| \cdot\|v_{t+1} - y_t\|^2 .
\end{align*}
Now we use the Lipschitz property of $\nabla^2\phi$ to bound the spectral norms of differences of Hessians:
\[
\|H(x_{t+1},y_t)-H(v_{t+1},v_t)\| \leq M \|z_{xy}-z_{vv}\|, \qquad
\|H(x_{t+1},y_t)-H(v_{t+1},y_t)\| \leq M \|z_{xy}-z_{vy}\|,
\]
where $z_{vv} \in [v_{t+1}, v_t]$, $z_{xy} \in [y_t, x_{t+1}]$ and $z_{vy} \in [y_t, v_{t+1}]$.
Using the triangle inequality of norms, we have
$$\|z_{xy} - z_{vy}\| = \|z_{xy} - y_t + y_t - z_{vy}\| \leq \|z_{xy} - y_t\| + \|y_t - z_{vy}\| \leq \|x_{t+1} - y_t\| + \|y_t - v_{t+1}\|,$$
and
$$\|z_{vv} - z_{xy}\| \leq \|z_{vv} - v_{t+1}\| + \|v_{t+1} - y_t\| + \|y_t - z_{xy}\| \leq  \|v_t - v_{t+1}\| + \|v_{t+1} - y_t\| + \|y_t - x_{t+1}\|.$$
Therefore, we have
$$d_t \triangleq  \max \bigl\{\|z_{xy} - z_{vv}\|, ~\|z_{xy} - z_{vy}\| \bigr\} \leq \|v_t - v_{t+1}\| + \|v_{t+1} - y_t\| + \|y_t - x_{t+1}\|,$$
and consequently,
\begin{align*}
    D_\phi(x_{t+1}, y_t) 
     &\leq \frac{\alpha_t^2}{2}\left((1 - \beta_t)\|v_{t+1} - v_t\|^2_{H(v_{t+1},v_t)} + \beta_t\|v_{t+1} - y_t\|^2_{H(v_{t+1},y_t)} \right)\\
     & \quad + \frac{M d_t\alpha_t^2}{2}\Bigl( (1 - \beta_t)\|v_{t+1} - v_t\|^2 + \beta_t\|v_{t+1} - y_t\|^2 \Bigr) \\
     &\leq \frac{\alpha_t^2}{2}\left((1 - \beta_t)\|v_{t+1} - v_t\|^2_{H(v_{t+1},v_t)} + \beta_t\|v_{t+1} - y_t\|^2_{H(v_{t+1},y_t)} \right)\\
     & \quad + \frac{M d_t\alpha_t^2}{2\sigma_\phi}\Bigl( (1 - \beta_t)\|v_{t+1} - v_t\|^2_{H(v_{t+1},v_t)} + \beta_t\|v_{t+1} - y_t\|^2_{H(v_{t+1},y_t)}  \Bigr) \\
     &= \frac{\alpha_t^2}{2} \left(1 + \frac{M d_t}{\sigma_\phi}\right) \left((1 - \beta_t)\|v_{t+1} - v_t\|^2_{H(v_{t+1},v_t)} + \beta_t\|v_{t+1} - y_t\|^2_{H(v_{t+1},y_t)} \right)\\
     &= \alpha_t^2 \left(1 + \frac{M d_t}{\sigma_\phi}\right) \Bigl( (1 - \beta_t)D(v_{t+1}, v_t) + \beta_t D(v_{t+1}, y_t) \Bigr).
\end{align*}
Combining with Lemma~\ref{lemma:Gt-bound}, we see that $G_t=\min\{\kappa_\sigma,~1+(M/\sigma_\phi)d_t\}$ satisfies the inequality~\eqref{eqn:Bregman-scaling}.
This finishes the proof of Lemma~\ref{lemma:LS}.

\smallskip

Note that this condition is not directly useful. Indeed, $x_{t+1}$ and $v_{t+1}$ depend on $G_t$.  Yet, under the uniform choice of $G_t\leq\kappa_\phi$, it can be shown that $d_t \rightarrow 0$ at rate $(1-1/\sqrt{\kappa_\phi \kappa_\rel})^t$ because the sequences $v_t$, $x_t$ and $y_t$ all converge to $x^*$ at this rate in the strongly convex case \citep[][Theorem~1]{LinXiao2015homotopy}. 
As a consequence, Algorithm~\ref{algo:spag} will eventually use $G_t \leq 2$, leading to an asymptotic rate of $(1-1/\sqrt{\kappa_\rel})^t$.

\section{Concentration of Hessians}
\label{app:concentration}

In practice, preconditioned gradient methods such as DANE are often used with a step-size of $1$. This implies the assumption of $L_\rel = 1$, which holds if~$n$ is sufficiently large with a given~$\mu$ or if~$\mu$ is sufficiently large for a given~$n$ (but $\mu\leq L_F$ always). Otherwise  convergence is not guaranteed (which is why it is sometimes considered as ``rather unstable"). If $\mu$ is such that $\|H_f(x) - H_F(x)\| \leq \mu$ for all $x\in \cB(0, D)$ then $L_\rel = 1$ can safely be chosen since $H_F(x) - H_f(x) \preceq \mu I_d$. Note that this choice of $\mu$ is completely independent of $\lambda$. In this case, we use that $H_F(x) - H_f(x) \succeq - \mu I$ to write that
$$H_f(x) + \mu \preceq H_F(x) + 2\mu \preceq (1 + 2\mu H_F^{-1}(x)) H_F(x) \preceq \left(1 + \frac{2\mu}{\lambda}\right) H_F(x).$$
These derivations are similar to the ones of \citet[Lemma~3]{ZhangXiao2018DiSCO}, and so we obtain $\sigma_\rel = \left(1 + \frac{2\mu}{\lambda}\right)^{-1}$ and the corresponding relative condition number $\kappa_\rel = 1 + \frac{2\mu}{\lambda}$, as explained in Section~\ref{sec:stat-precond}. We see that $\mu$ is independent of $\lambda$, but the problem is still very ill-conditioned for small values of $\lambda$, meaning that acceleration makes a lot of sense. In the quadratic case, tighter relative bounds can be derived. 

\subsection{The quadratic case}
\label{app:concentration_quad}
This section is focusing on proving Theorem~\ref{thm:concentration_quadratics}.

\begin{proof}[Proof of Theorem~\ref{thm:concentration_quadratics}]
We consider the random variable $a$, and $(a_i)_{i \in \{1, ..., n\}}$ are $n$ i.i.d. variables with the same law as $a$.  We introduce matrices $\hat{H}$ and $H$ such that $H_f = \hat{H} + \lambda I_d$ and $H_F = H + \lambda I_d$. In particular, $H =  \esp{aa^\top} = \dE \hat{H}$.
We define for $\alpha \geq 0$, $\beta > 0$, $H_{\alpha, \beta} = \alpha H + \beta I_d$, and $$S_i = \frac{1}{n}H_{\alpha, \beta}^{-\frac{1}{2}}(a_i a_i^\top - H) H_{\alpha, \beta}^{-\frac{1}{2}},$$
which is such that $\esp{S_i} = 0$. This allows to have bounds of the form $\|\sum_i S_i \| \leq t$ with probability $1 - \delta$ and a spectral bound~$\mu$ that depends on $\alpha$, $\beta$, $\delta$ (and other quantities related to $H$ and $a_i a_i^\top$). We note that 
$$\sum_{i=1}^n S_i = H_{\alpha, \beta}^{-\frac{1}{2}}(\hat{H} - H)H_{\alpha, \beta}^{-\frac{1}{2}},$$
and write the concentration bounds on the $S_i$ as $- t H_{\alpha, \beta} \preceq \hat{H} - H \preceq t H_{\alpha, \beta}$ for some $t>0$, which can be rearranged as:
\begin{align*}
&    \hat{H} + t \beta I_d \succeq (1 - t \alpha) H \\
& \hat{H} - t \beta I_d \preceq (1 + t \alpha) H.
\end{align*}
Using $H_f = \hat{H} + \lambda I_d$ and $H_F = H + \lambda I_d$, the first equation can be rearranged as:
\begin{equation}
H_F \preceq \frac{1}{1 - t \alpha}\left(H_f + t (\beta - \alpha \lambda) I_d\right).
\end{equation}
The second equation can be written
$$H_f \preceq \left[(1 + t\alpha)I_d + t(\beta - \alpha \lambda)H_F^{-1}\right]H_F,$$
which, by adding $t(\beta - \alpha \lambda)I_d$ on both sides, leads to
$$H_f + t(\beta - \alpha \lambda) I_d \preceq \left[(1 + t\alpha)I_d + 2t(\beta - \alpha \lambda)H_F^{-1}\right]H_F.$$
We let $\mu = t(\beta - \alpha \lambda)$ and use $H_F^{-1} \preceq \lambda^{-1} I_d$ to write that:
\begin{equation}
\label{eq:generic_concentration_quadratics}
\left(1 + \alpha t + \frac{2 \mu}{\lambda}\right)^{-1} (H_f + \mu I_d) \preceq H_F \preceq \frac{1}{1 - \alpha t}\left(H_f + \mu I_d \right).
\end{equation}
We then use the fact that $a_i a_i^\top$ and $H$ are positive semidefinite and upper bounded by $R^2 I$ to write that: 
\begin{equation}\label{eqn:Si-bound}
\| S_i \| \leq \frac{1}{n}\|H_{\alpha, \beta}^{-1}\| \max\bigl\{\|a a^\top \|, \| H \|\bigr\} \leq \frac{R^2}{\beta n}.
\end{equation}
Using the fact that $H = \esp{a a^\top}$, we bound the variance as:
\begin{align*}
    \Bigl\|\sum_i\esp{ S_i S_i^\top}\Bigr\| &= \frac{1}{n}\left\|\esp{H_{\alpha, \beta}^{-\frac{1}{2}}(a a^\top - H) H_{\alpha, \beta}^{-1}(aa^\top - H) H_{\alpha, \beta}^{-\frac{1}{2}}}\right\| \\
    &= \frac{1}{n}\left\|H_{\alpha, \beta}^{-\frac{1}{2}}(\esp{a a^\top H_{\alpha, \beta}^{-1} a a^\top} - H H_{\alpha, \beta}^{-1} H) H_{\alpha, \beta}^{-\frac{1}{2}}\right\|\\
    &\leq \frac{1}{n}\max\left\{\tilde{R}^2\left\|H_{\alpha, \beta}^{-\frac{1}{2}}\esp{aa^\top}H_{\alpha, \beta}^{-\frac{1}{2}}\right\|,~\left\|H_{\alpha, \beta}^{-\frac{1}{2}} H H_{\alpha, \beta}^{-1} H H_{\alpha, \beta}^{-\frac{1}{2}}\right\|\right\}\\
    &\leq \frac{1}{n}\left\|H_{\alpha, \beta}^{-\frac{1}{2}}H H_{\alpha, \beta}^{-\frac{1}{2}}\right\| \max\left\{\tilde{R}^2, ~\left\|H_{\alpha, \beta}^{-\frac{1}{2}}H H_{\alpha, \beta}^{-\frac{1}{2}}\right\|\right\},
\end{align*}
with $\tilde{R}^2 \geq a^\top H_{\alpha, \beta}^{-1} a$ almost surely. We first notice that $a_i^\top H_{\alpha, \beta}^{-1} a_i \leq \frac{R^2}{\beta}$. Then, we use the positive definiteness of $H_{\alpha, \beta}$ and $H$ and the fact that $\beta  H_{\alpha, \beta}^{-1} \preceq I_d$ to show that for $\alpha > 0$:
\begin{align*}
    \left\|H_{\alpha, \beta}^{-\frac{1}{2}}H H_{\alpha, \beta}^{-\frac{1}{2}}\right\| &= \left\|H_{\alpha, \beta}^{-\frac{1}{2}}\frac{(\alpha H + \beta - \beta)}{\alpha} H_{\alpha, \beta}^{-\frac{1}{2}}\right\| = \frac{1}{\alpha}\left\|I_d - \beta  H_{\alpha, \beta}^{-1}\right\|
    \leq \alpha^{-1}\left(1 - \frac{\beta}{\alpha L + \beta}\right)
    = \frac{L}{\alpha L + \beta},
\end{align*}
where $L$ is the spectral norm of~$H$, i.e., $L=\|H\|$.
A quick calculation shows that this formula is also true for $\alpha=0$. In the case $\alpha = 0$ and $\beta = 1$ (absolute bounds), $H_{\alpha, \beta} = I_d$ and we recover that we can bound the variance by $\frac{L R^2}{n}$, leading to the usual additive bounds.

For $\alpha > 0$, we use the simpler bound $\bigl\|H_{\alpha, \beta}^{-\frac{1}{2}}HH_{\alpha, \beta}^{-\frac{1}{2}}\bigr\| \leq \alpha^{-1}$ and $\tilde{R}^2\leq \beta^{-1} R^2$, leading to
$$\Bigl\|\sum_i\esp{ S_i S_i^\top}\Bigr\| \leq \frac{\max(\beta^{-1}R^2, \alpha^{-1})}{n \alpha}.$$
For any $1 > \delta > 0$, we note $c_\delta = \frac{28}{3}\log\left(\frac{2d}{\delta}\right)$. We now set $\alpha = \frac{\beta n}{c_\delta R^2}$, and assume that $n > c_\delta$ (otherwise concentration bounds will be very loose anyway). In this case, $\beta^{-1}R^2 \geq \alpha^{-1}$, meaning that the bound on the variance becomes:
$$\Bigl\|\sum_i\esp{ S_i S_i^\top}\Bigr\| \leq \frac{1}{\alpha^2 c_\delta}.$$
Similarly, according to~\eqref{eqn:Si-bound}, every $S_i$ is almost surely bounded as:
$\| S_i \| \leq \frac{1}{\alpha c_\delta}$.
We can now use Matrix Bernstein Inequality ~\citep[][Theorem (6.1.1)]{tropp2015introduction} to get that with probability $1 - p_\delta$ and for $t \geq 0$, $$\Bigl\|\sum_{i=1}^n S_i\Bigr\| \leq t,$$
with $$p_\delta = 2d \cdot \exp(-\frac{t^2 / 2}{(\alpha^2 c_\delta)^{-1} + (\alpha c_\delta)^{-1} t / 3}).$$
We choose $t = (2\alpha)^{-1}$, which leads to $p_\delta = \delta$. By substituting the expressions of $\alpha t = \frac{1}{2}$ and $\beta t = \frac{R^2 c_\delta}{n} \alpha t$ into Equation~\eqref{eq:generic_concentration_quadratics}, we obtain:
\begin{equation*}
\left(\frac{3}{2} + \frac{2 \mu}{\lambda}\right)^{-1} (\hat{H}_\lambda + \mu I_d) \preceq H_\lambda \preceq 2 \left(\hat{H}_\lambda + \mu I_d\right),
\end{equation*}
with 
\begin{equation*}
\mu = t(\beta-\alpha\lambda) = \frac{1}{2}\left(\frac{28 R^2}{3n}\log\left(\frac{2d}{\delta}\right) - \lambda \right).
\end{equation*}

In case $\beta$ is very small so that $\mu < 0$ then it is always possible to choose $\delta^\prime < \delta$ so that $\mu > 0$. This means that the same bound on~$\mu$ holds with probability $1 - \delta^\prime > 1 - \delta$.
\end{proof}

\subsection{Almost surely bounded $a$}
\label{app:concentration_gen}

We first introduce Theorem~\ref{thm:sudakov_gen}, which proves a general concentration result that implies Theorem~\ref{thm:concentration_sudakov} as a special case.

\begin{theorem}
\label{thm:sudakov_gen}
We consider functions $\varphi_1$, $\varphi_2$, which are respectively  $L_1$ and $L_2$ Lipschitz-continuous. We consider two sets $\X$ and $\Y$ which are contained in balls of center 0 and radius $D_1$ and $D_2$. We assume that $|\varphi_1(a_i^\top x) | \leq B_1$ and $|\varphi_2(a_i^\top y) | \leq B_2$ almost surely for all $x \in \X$ and $y \in \Y$. We consider 
$$Y = \sup_{x \in \X, ~y \in \Y} \Big\{ \frac{1}{n} \sum_{i=1}^n \varphi_1(a_i^\top x) \varphi_2( a_i^\top y) - \E \varphi_1(a^\top x) \varphi_2( a^\top y) \Big\}.$$
Then, for all $1 \geq \delta > 0$, with probability greater than $1 - \delta$:
   $$
   Y \leq 
   \sqrt{4\pi}\frac{( \esp{\| a\|^2} )^{\frac{1}{2}}}{\sqrt{n}}  (  B_2 L_1  D_1 
 +  B_1 L_2  D_2 )
   + \frac{2B_1 B_2 }{\sqrt{2n}} \sqrt{\log \frac{1}{\delta}} .
   $$
\end{theorem}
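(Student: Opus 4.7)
\textbf{Proof plan for Theorem~\ref{thm:sudakov_gen}.}

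The plan is to decompose $Y - (2B_1 B_2/\sqrt{2n})\sqrt{\log(1/\delta)}$ into $\bigl(Y - \E Y\bigr) + \E Y$, controlling the first term by a bounded-differences argument and the second by symmetrization followed by a Gaussian comparison.

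For the concentration step, I would apply McDiarmid's inequality. View $Y$ as a function of the independent sample $(a_1,\dots,a_n)$. Since $|\varphi_1(a_i^\top x)\varphi_2(a_i^\top y)| \leq B_1 B_2$ almost surely and uniformly in $(x,y)$, replacing any single $a_i$ changes the empirical average $\frac{1}{n}\sum_j \varphi_1(a_j^\top x)\varphi_2(a_j^\top y)$ by at most $\frac{2B_1 B_2}{n}$ for every $(x,y)$; the supremum is a contraction on this space, so the bounded-differences constant on each coordinate is $\frac{2B_1 B_2}{n}$. McDiarmid then yields $Y \leq \E Y + B_1 B_2\sqrt{2\log(1/\delta)/n}$ with probability at least $1-\delta$, which is exactly the last term in the statement.

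The remaining work is to bound $\E Y$ by the leading term. First apply standard symmetrization with Rademacher variables $\epsilon_i$ to get
\[
\E Y \leq 2\,\E_a\E_\epsilon \sup_{x\in\X,\,y\in\Y}\frac{1}{n}\sum_{i=1}^n \epsilon_i \varphi_1(a_i^\top x)\varphi_2(a_i^\top y),
\]
then pass from Rademacher to Gaussian processes via the comparison $\E_\epsilon\sup \leq \sqrt{\pi/2}\,\E_g\sup$, losing the factor $\sqrt{\pi/2}$ that will combine with the $2$ to produce $\sqrt{4\pi}$. Conditionally on $(a_i)$, I will invoke the Sudakov--Fernique lemma. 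The Gaussian process
\[
U_{x,y} = \frac{1}{n}\sum_i g_i\,\varphi_1(a_i^\top x)\varphi_2(a_i^\top y)
\]
has squared increments which, using the product decomposition
\[
\varphi_1(a^\top x)\varphi_2(a^\top y) - \varphi_1(a^\top x')\varphi_2(a^\top y') = \varphi_1(a^\top x)\bigl[\varphi_2(a^\top y)-\varphi_2(a^\top y')\bigr] + \varphi_2(a^\top y')\bigl[\varphi_1(a^\top x)-\varphi_1(a^\top x')\bigr],
\]
together with the Lipschitz bounds, boundedness, and $(\alpha+\beta)^2\leq 2\alpha^2+2\beta^2$, are dominated pointwise by those of the additive comparator
\[
V_{x,y} = \frac{\sqrt{2}\,B_2 L_1}{n}\sum_i h_i\,a_i^\top x + \frac{\sqrt{2}\,B_1 L_2}{n}\sum_i h'_i\,a_i^\top y,
\]
where $h_i,h'_i$ are independent standard Gaussians. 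Sudakov--Fernique then gives $\E_g\sup U \leq \E_{h,h'}\sup V$, and the supremum over $V$ splits into two independent maximizations.

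For each half, Cauchy--Schwarz bounds $\sup_x\langle\sum_i h_i a_i,x\rangle$ by $D_1\|\sum_i h_i a_i\|$, and Jensen then yields $\E\|\sum_i h_i a_i\| \leq \sqrt{n\,\E\|a\|^2}$. Combining the two halves and multiplying by the $2\sqrt{\pi/2}=\sqrt{2\pi}$ factor from symmetrization and Gaussianization, together with the $\sqrt{2}$ from the increment inequality, produces exactly $\sqrt{4\pi}\,(\E\|a\|^2)^{1/2}(B_2 L_1 D_1 + B_1 L_2 D_2)/\sqrt{n}$, matching the leading term in the statement.

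The step I expect to require the most care is the Sudakov--Fernique comparison: one must exhibit the correct comparator process whose increment variance dominates that of $U$ uniformly in $(x,y)$ and $(x',y')$, with the right constants so that the bound recomposes cleanly. Once the comparator $V_{x,y}$ is written down, everything else reduces to routine Gaussian calculations and the use of Cauchy--Schwarz.
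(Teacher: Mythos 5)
Your proposal follows essentially the same route as the paper's proof: McDiarmid with bounded-differences constant $2B_1B_2/n$, Gaussian symmetrization with the $\sqrt{2\pi}$ factor, the product-to-sum increment decomposition feeding Sudakov--Fernique with exactly the comparator $V_{x,y}$ the paper uses, and Cauchy--Schwarz plus Jensen to finish. The only cosmetic difference is that you pass through Rademacher variables before Gaussianizing, whereas the paper symmetrizes directly with Gaussians; the constants agree either way.
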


Theorem~\ref{thm:concentration_sudakov} is then a direct corollary of Theorem~\ref{thm:sudakov_gen}, as shown below:
\begin{proof}[Proof of Theorem~\ref{thm:concentration_sudakov}]
The result is obtained by applying Theorem~\ref{thm:sudakov_gen} with $\varphi_1 = \ell^{\prime \prime}$ and $\varphi_2 = \frac{1}{2}(\cdot)^2$. This implies that with probability at least $1 - \delta$,
$$ \sup_{x \in \mathcal{B}(0, D),~ y \in \mathcal{B}(0,1)} y^\top \Big[ \frac{1}{n} \sum_{i=1}^n \ell^{\prime \prime}(a_i^\top x) a_i a_i^\top - \E \ell^{\prime \prime}(a^\top x) a a^\top \Big] y \leq \mu, $$
where the value of $\mu$ can be obtained by letting $B_1 = B_\ell$, $L_1 = M_\ell$, $D_1 = D$, $D_2 = 1$, $B_2 = \sup_{y: \|y\| \leq 1} y^\top a_i a_i^\top y \leq R^2$ and $L_2 = \sup_{y: \|y\| \leq 1} 2 \| y^\top a_i \| = 2 R$.
\end{proof}

\begin{proof}[Proof of Theorem~\ref{thm:sudakov_gen}]
If changing any $a_i$ to some $a_i'$, then the deviation in $Y$ is at most (almost surely): 
$$
\frac{1}{n} \sup_{x \in \X, \ y \in \Y} 
\bigl| \varphi_1(a_i^\top x) \varphi_2( a_i^\top y) \bigr|
+ \sup_{x \in \X, \ y \in \Y} 
\bigl|\varphi_1(a_i'^\top x) \varphi_2( a_i'^\top y) \bigr|  \leq \frac{2}{n} B_1 B_2.$$

Mac-Diarmid's inequality~\citep[see, e.g.,][Theorem~2.9.1]{vershynin2019high} thus implies that with probability greater than $1-\delta$,
\begin{equation}
    \label{eq:mc_diarmid}
    Y  \leq \E Y + \frac{2B_1 B_2 }{\sqrt{2n}} \sqrt{\log \frac{1}{\delta}} \ .
\end{equation}

In order to bound $\E Y$, we first use classical symmetrization property~\citep[see, e.g.,][Section~6.4]{vershynin2019high}
$$
\E Y \leq \sqrt{2\pi}  \cdot \E \sup_{x \in \X, \ y \in \Y}  \frac{1}{n} \sum_{i=1}^n \varepsilon_i \varphi_1(a_i^\top x) \varphi_2( a_i^\top y),
$$
where each $\varepsilon_i$ is an independent standard normal variable.

Denoting $Z_{x,y} = \frac{1}{n} \sum_{i=1}^n \varepsilon_i \varphi_1(a_i^\top x) \varphi_2( a_i^\top y)$, we have, for any $x,y,x',y'$, assuming the $a_i$ are fixed,
\BEAS
\E ( Z_{x,y} - Z_{x',y'})^2
& = & \frac{1}{n^2} \sum_{i=1}^n 
\Big( \varphi_1(a_i^\top x) \varphi_2( a_i^\top y) - \varphi_1(a_i^\top x') \varphi_2( a_i^\top y')
 \Big)^2 \\
 & = & \frac{1}{n^2} \sum_{i=1}^n 
\Big( \varphi_1(a_i^\top x) \bigl[ \varphi_2( a_i^\top y) -  \varphi_2( a_i^\top y')\bigr] 
+\bigl[ \varphi_1(a_i^\top x) -  \varphi_1(a_i^\top x') \bigr] \varphi_2( a_i^\top y')
 \Big)^2 \\
 & \leq & \frac{1}{n^2} \sum_{i=1}^n \Big( 
 2\, \varphi_1(a_i^\top x)^2 \bigl[ \varphi_2( a_i^\top y) -  \varphi_2( a_i^\top y')\bigr]^2
+ 2\, \varphi_2( a_i^\top y')^2 \bigl[ \varphi_1(a_i^\top x) -  \varphi_1(a_i^\top x') \bigr]^2 
\Big) \\
 & \leq & \frac{1}{n^2} \sum_{i=1}^n \Big( 
 2 B_1^2 \bigl[  \varphi_2( a_i^\top y) -  \varphi_2( a_i^\top y')\bigr]^2
+ 2B_2^2 \bigl[ \varphi_1(a_i^\top x) -  \varphi_1(a_i^\top x') \bigr ]^2 
\Big) .
\EEAS
We then have, using Lipschitz-continuity:
\BEAS
\E ( Z_{x,y} - Z_{x',y'})^2
& \leq & 
 \frac{1}{n^2} \sum_{i=1}^n \Big( 
 2 B_1^2 L_2^2 \bigl[  a_i^\top y -    a_i^\top y' \bigr]^2
+ 2B_2^2 L_1^2 \bigl[  a_i^\top x  -   a_i^\top x' \bigr]^2 
\Big)
\\
& = & \E ( \tilde{Z}_{x,y} - \tilde{Z}_{x',y'})^2,
 \EEAS
 for $$\tilde{Z}_{x,y} = \frac{1}{n} \sum_{i=1}^n \Big\{ \sqrt{2}B_2 L_1 \tilde{\varepsilon}_{1i}  a_i^\top x
 + \sqrt{2}B_1 L_2 \tilde{\varepsilon}_{2i} a_i^\top y \Big\},$$
 with all $\tilde{\varepsilon}_{1i}$ and $\tilde{\varepsilon}_{2i}$ independent standard random variables.
 
 Using Sudakov-Fernique inequality~\citep[][Theorem  7.2.11]{vershynin2019high}, we get
 \BEAS
 \E Y  & = & \sqrt{2\pi} \E \sup_{x \in \X, \ y \in \Y}  Z_{x,y} \\
 & \leq  & \sqrt{2\pi} \E \sup_{x \in \X, \ y \in \Y}  \tilde{Z}_{x,y} \\
 &= & \sqrt{4\pi}B_2 L_1  \E \sup_{x \in \X} \frac{1}{n} \sum_{i=1}^n \tilde{\varepsilon}_{1i}  a_i^\top x
 + \sqrt{4\pi}B_1 L_2  \E \sup_{y \in \Y} \frac{1}{n} \sum_{i=1}^n \tilde{\varepsilon}_{2i}  a_i^\top y \\
 & \leq & \sqrt{4\pi} B_2 L_1  D_1 \E \Big\|  \frac{1}{n} \sum_{i=1}^n \tilde{\varepsilon}_{1i}  a_i \Big\|
 + \sqrt{4\pi} B_1 L_2  D_2 \E \Big\|  \frac{1}{n} \sum_{i=1}^n \tilde{\varepsilon}_{2i}  a_i \Big\|
\\
 & \leq & \sqrt{4\pi} B_2 L_1  D_1 \sqrt{\E \Big\|  \frac{1}{n} \sum_{i=1}^n \tilde{\varepsilon}_{1i}  a_i \Big\|^2}
 + \sqrt{4\pi} B_1 L_2  D_2 \sqrt{\E \Big\|  \frac{1}{n} \sum_{i=1}^n \tilde{\varepsilon}_{2i}  a_i \Big\|^2}
\\
 & \leq & \sqrt{4\pi} B_2 L_1  D_1 \frac{( \esp{\| a\|^2} )^{\frac{1}{2}}}{\sqrt{n}} 
 + \sqrt{4\pi} B_1 L_2  D_2 \frac{( \esp{\| a\|^2} )^{\frac{1}{2}}}{\sqrt{n}}.
   \EEAS
   
   Plugging this into Equation~\eqref{eq:mc_diarmid}, we obtain that with probability greater than $1-\delta$,
   $$
   Y \leq 
   \sqrt{4\pi}\frac{( \esp{\| a\|^2} )^{\frac{1}{2}}}{\sqrt{n}}  (  B_2 L_1  D_1 
 +  B_1 L_2  D_2 )
   + \frac{2B_1 B_2 }{\sqrt{2n}} \sqrt{\log \frac{1}{\delta}} .
   $$
\end{proof}

\begin{remark}[Relative bounds]
\emph{In the quadratic case, considering relative bounds allowed to choose smaller values of $\mu$ and to tighten the bounds on the relative condition number by a $\sqrt{n}$ factor. Theorem~\ref{thm:concentration_sudakov} consists in bounding (using the definition of the operator norm)
$$\sup_{x\in \cB(0,D), y \in \cB(0,1)} \Big\{
\frac{1}{n}\sum_{i=1}^n\ell^{\prime \prime}(a_i^\top x) (a_i^\top y)^2 -
y^\top H(x) y\Big\},$$
and heavily relies on the fact that $(a_i^\top y)^2$ is independent of $x$. The proof needs to be adapted in the case of the relative bounds since this term becomes $(a_i^\top H_{\alpha, \beta}^{-\frac{1}{2}}(x) y)^2$, which now depends on $x$ as well, and thus requires a different control.}
\end{remark}

\subsection{Subgaussian $a$}
\label{app:concentration_subgaussian}
We considered in the previous section a splitting of the summands of the Hessians as a product of $2$ functions. We now present a different bound that is designed for a product of an arbitrary number of functions $\varphi_1, \ldots, \varphi_r:\dR\to\dR$. This section is devoted to proving Theorem~\ref{thm:subgaussian_gen}, which is based on the chaining argument~\citep[][Chapter~13]{boucheron2013concentration}, and from which Theorem~\ref{thm:concentration_subgaussian} can be derived directly.

\begin{theorem}
\label{thm:subgaussian_gen}
Assume that for all $i$, $\varphi_i(0)=0$ and $\varphi_i$ is $1$-Lipschitz. Assume that $a$ is $\rho$-subgaussian, and that for all $k$, $\sup_{x\in \cB(0,1)}|\varphi_k(a_i^\top x)|\le B_k$. Denote $B=\prod_{k=1}^r B_k$. For suitable constant $C_r$, for all $\gamma>0$, one has that
\begin{align*}
&\dP\left(\sup_{x_1,\ldots,x_r\in \cB(0, 1)} \frac{1}{n}\sum_{i\in[n]}\left\{\prod_{k=1}^r \varphi_k(a_i^\top x_k)-\dE\prod_{k=1}^r\varphi_k(a^\top x_k)\right\}\ge \rho^rC_r(d+\gamma)\left[\frac{1}{\sqrt{dn}}+\frac{(\rho^{-r}B)^{1-2/r}}{n}\right]\right) \\
&\le r\frac{\pi^2}{6} e^{-\gamma}.
\end{align*}
\end{theorem}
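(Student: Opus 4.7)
The plan is to bound the supremum of the centered empirical process
$$
W(x) \;:=\; \tfrac{1}{n}\sum_{i=1}^n\Bigl\{\prod_{k=1}^r\varphi_k(a_i^\top x_k)-\dE\prod_{k=1}^r\varphi_k(a^\top x_k)\Bigr\}
$$
over $x=(x_1,\ldots,x_r)\in T:=\cB(0,1)^r\subset\R^{rd}$ by symmetrization followed by generic chaining under a Bernstein-type increment condition. The two terms $1/\sqrt{dn}$ and $(\rho^{-r}B)^{1-2/r}/n$ in the target bound will emerge respectively from the sub-Gaussian and sub-exponential components of this mixed chaining.

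First I would symmetrize. Setting $Z_i(x)=\prod_k\varphi_k(a_i^\top x_k)$, the standard Gin\'e--Zinn bound gives $\dE\sup_x|W(x)|\le 2\,\dE_{a,\varepsilon}\sup_x|\tilde W(x)|$ with $\tilde W(x)=\tfrac{1}{n}\sum_i\varepsilon_i Z_i(x)$, and the envelope bound $|Z_i|\le B$ on $T$ lets Talagrand's functional Bernstein inequality transfer the sub-Gaussian/sub-exponential deviation of $\sup_x|W(x)|$ around its mean into fluctuations of order $B\sqrt{\gamma/n}+B\gamma/n$, which will later be absorbed into the claimed bound. Next I would telescope the product: for $x,y\in T$,
$$
Z_i(x)-Z_i(y)=\sum_{j=1}^r\Big(\prod_{k<j}\varphi_k(a_i^\top x_k)\Big)\bigl(\varphi_j(a_i^\top x_j)-\varphi_j(a_i^\top y_j)\bigr)\Big(\prod_{k>j}\varphi_k(a_i^\top y_k)\Big).
$$
The almost sure bound $|\varphi_k(a_i^\top\cdot)|\le B_k$ and the 1-Lipschitz property of $\varphi_j$ give, for each summand, the pointwise envelope $(B/B_j)\,|a_i^\top(x_j-y_j)|$, while $\rho$-sub-Gaussianity of $a_i$ makes it sub-Gaussian with variance proxy $(B/B_j)^2\rho^2\|x_j-y_j\|^2$. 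Summing over $j$ yields a Bernstein-type tail for the Rademacher-weighted increment,
$$
\dP\bigl(|\tilde W(x)-\tilde W(y)|\ge t\bigr)\le 2\exp\!\Bigl(-c\min\bigl\{n t^2/\sigma^2(x,y),\,nt/M(x,y)\bigr\}\Bigr),
$$
with $\sigma(x,y)\lesssim \rho^r\|x-y\|_{2}$ (once the $B_j^{-1}$'s are absorbed by $\|x_j-y_j\|_2$) and $M(x,y)\lesssim B\rho^{r-1}\max_j\|x_j-y_j\|$.

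I would then apply generic chaining with these two metrics. Since $T$ is a product of $r$ Euclidean balls of radius one in $\R^d$, its covering numbers satisfy $\log N(T,\|\cdot\|_2,\epsilon)\le rd\log(3/\epsilon)$, giving $\gamma_2(T,\sigma)\lesssim \rho^r\sqrt{rd}$ and $\gamma_1(T,M)\lesssim B\rho^{r-1}\cdot rd$, together with diameters $\sigma_T\lesssim \rho^r\sqrt{d}$, $M_T\lesssim B\rho^{r-1}$. Dirksen's chaining tail bound for processes with mixed Bernstein increments then yields, with probability at least $1-\tfrac{\pi^2}{6}e^{-\gamma}$ per telescoping term,
$$
\sup_x|\tilde W(x)|\;\lesssim\;\frac{\gamma_2(T,\sigma)}{\sqrt{n}}+\frac{\gamma_1(T,M)}{n}+\sigma_T\sqrt{\gamma/n}+M_T\,\gamma/n.
$$
Collecting $(d+\gamma)$ from the sub-Gaussian and sub-exponential contributions, and union-bounding over the $r$ terms of the telescope, produces the global failure probability $r\tfrac{\pi^2}{6}e^{-\gamma}$ and reshuffles the bound into the announced form $\rho^r C_r(d+\gamma)\bigl[\tfrac{1}{\sqrt{dn}}+\tfrac{(\rho^{-r}B)^{1-2/r}}{n}\bigr]$.

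The main obstacle is calibrating the exponent $1-2/r$ on $\rho^{-r}B$. It is not present in the raw chaining estimate; it arises when one balances the sub-Gaussian chaining diameter ($\rho^r\sqrt{d}$) against the sub-exponential diameter ($B\rho^{r-1}$) at the scale dictated by the product structure, i.e.\ using that $\sigma$ is quadratic in $\|x-y\|_2$ across all $r$ coordinates whereas $M$ is linear in a single $\|x_j-y_j\|$. Bookkeeping the powers of $\rho$ and the $B_k$'s through the telescope, and then optimizing the crossover scale at which the sub-Gaussian chaining is relayed by the sub-exponential chaining, is the technical heart of the argument and is what fixes the specific power $1-2/r$.
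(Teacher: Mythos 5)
Your skeleton (telescoping the product so that only one coordinate changes per chaining step, dyadic nets, and a union bound over levels and coordinates yielding the $\pi^2/6$ factor) matches the paper's proof, but there is a genuine gap exactly where you locate "the main obstacle," and the mechanism you sketch for resolving it would not produce the stated bound. The increment of a single telescoping term is a product of $r$ factors, each sub-Gaussian in $a_i$; for $r\ge 3$ such a product is \emph{not} sub-exponential — its tail is of Weibull type $\exp(-c\,t^{2/r})$, heavier than $\psi_1$. So the Bernstein-type increment condition you posit, with $\sigma(x,y)\lesssim \rho^r\|x-y\|$ and $M(x,y)\lesssim B\rho^{r-1}\max_j\|x_j-y_j\|$, is not available, and Dirksen's mixed $\gamma_2/\gamma_1$ chaining does not apply as stated. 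Moreover, even if one granted that increment structure, the resulting bound would have second term of order $B\rho^{r-1}d/n=\rho^{r}(\rho^{-1}B)\,d/n$, which is not $\rho^{r}(\rho^{-r}B)^{1-2/r}d/n$; no balancing of the two chaining diameters produces the exponent $1-2/r$.

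The paper's route through this point is different and is the technical heart you would need to supply. At level $j$ and coordinate $k$, each summand $Z_i$ satisfies $\dP(Z_i\ge \epsilon_j\rho^r t)\le 2r e^{-t^{2/r}/2}$ for $t\le \rho^{-r}P_{j,k}$ and vanishes beyond the truncation level $P_{j,k}=\min\{2B/\epsilon_j,\,2(B/B_k)R\}$, which comes from the almost-sure envelopes. One then bounds the moment generating function of the centered sum directly: the log-MGF is quadratic in $\theta$ only for $\theta/n\lesssim (\rho^{-r}P_{j,k})^{2/r-1}$, because that is the range in which $(\theta/n)y\le y^{2/r}/8$ on the support $[0,\rho^{-r}P_{j,k}]$. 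Optimizing the Chernoff bound at this threshold (taking $\theta_{j,k}=\min(n,\,n(\rho^{-r}P_{j,k})^{2/r-1}/8,\,\sqrt{nd})$) is what yields a per-level contribution $\epsilon_j t_{j,k}$ whose sub-exponential-like part scales as $(\rho^{-r}P_{j,k})^{1-2/r}/n$; summing the geometric series over $j$, split at the crossover scale $j^*(k)$ where the two expressions for $P_{j,k}$ coincide, gives the $(\rho^{-r}B)^{1-2/r}$ factor. (Two further minor discrepancies: the paper does not symmetrize or invoke Talagrand's concentration for this theorem — the deviation term is built into the Chernoff bound via the free parameter $\gamma$ — and the statement is a one-sided supremum, so no absolute values are needed.) Without the truncated-MGF computation, your argument does not close.
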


We are primarily interested in the case $r=3$, $\varphi_1=\varphi_2=\mathrm{id}$ (the identity mapping) to control distances between Hessians. 

\begin{proof}
We look for bounds on 
\begin{equation}\label{eq:new_bound_2}
Y:=\sup_{x_1,\ldots,x_r\in \cS_1} \frac{1}{n}\sum_{i\in[n]}\left\{\prod_{k=1}^r \varphi_k(a_i^\top x_k)-\dE_a\prod_{k=1}^r\varphi_k(a^\top x_k)\right\}.
\end{equation}

For all $j\ge 0$, let $\cN_j$ be an $\epsilon$-net of $\cS_1$ that approximates $\cS_1$ to distance $2^{-j}$. Then, $\cN_j$ can be chosen as $|\cN_j|\le (1+2^{j+1})^d$\citep[see, e.g.,][Section~4.2]{vershynin2019high}. For all $x\in\cS_1$, let $\Pi_j(x)$ be some point in $\cN_j$ such that $\|x-\Pi_j(x)\|\le 2^{-j}$. By convention we take $\Pi_0(x)=0$.

Then for all $(x_1,\ldots,x_r)\in\cS^r$, using the chaining approach~\citep{boucheron2013concentration}, we write
\begin{align*}
&\frac{1}{n}\sum_{i\in[n]}\prod_{k\in[r]}\varphi_k(a_i^\top x_k) \\
=~&\sum_{j\ge 0}\frac{1}{n}\sum_{i\in[n]}\left\{
\prod_{k\in[r]}\varphi_k(a_i^\top \Pi_{j+1} (x_k))-\prod_{k\in[r]}\varphi_k(a_i^\top \Pi_{j}(x_k))\right\}\\
=~&\sum_{j\ge 0}\sum_{k\in[r]}\frac{1}{n}\sum_{i\in[n]} \prod_{\ell=1}^{k-1}\varphi_{\ell}(a_i^\top\Pi_{j+1}(x_{\ell}))\bigl[\varphi_k(a_i^\top\Pi_{j+1}(x_k))-\varphi_k(a_i^\top\Pi_j(x_k))\bigr]\prod_{\ell=k+1}^r \varphi_\ell(a_i^\top \Pi_j(x_\ell)).
\end{align*}

Let $j\ge 0$ and $k\in[r]$ be fixed.
Consider a term of the form $Z=\frac{1}{n}\sum_{i\in[n]}Z_i$, with 
\begin{equation}\label{eq:z_i}
Z_i=\prod_{\ell=1}^{k-1} \varphi_\ell(a_i^\top u_{\ell})\bigl[\varphi_k(a_i^\top u_k)-\varphi_k(a_i^\top v_k)\bigr]\prod_{\ell=k+1}^r \varphi_\ell(a_i^\top v_\ell),
\end{equation}
where $u_\ell\in\cN_j$, $v_\ell\in\cN_{j+1}$, and $\|u_k-v_k\|\le \epsilon_j:=2^{-j+1}$. By the triangle inequality, for all $x_\ell\in \cS_1$, letting $u_\ell=\Pi_j(x_\ell)$ and $v_\ell=\Pi_{j+1}(x_\ell)$, these assumptions are satisfied. For each $Z_i$ and $t>0$, we have:
\begin{align*}
\dP(Z_i\ge\epsilon_j\rho^rt)
&\le \dP\Bigl(|\varphi_\ell(a_i^\top u_\ell)|\ge \rho t^{1/r}\hbox{ for some }\ell<r,\\
&\qquad~ \hbox{ or }|\varphi_k(a_i^\top u_k)-\varphi_k(a_i^\top v_k)|\ge \rho \epsilon_j t^{1/r},\\
&\qquad~ \hbox{ or }|\varphi_\ell(a_i^\top v_\ell)|\ge \rho t^{1/r}\hbox{ for some }\ell>k \Bigr).  
\end{align*}
Therefore, we have 
$$ \dP(Z_i\ge\epsilon_j\rho^rt) \le 2r e^{-t^{2/r}/2} \hbox{ if } t\le \rho^{-r}P_{j,k} \hbox{ and }
$$
$$
\dP(Z_i\ge\epsilon_j\rho^rt) = 0 \hbox{ if }t> \rho^{-r}P_{j,k},
$$
where we noted $P_{j,k}:=\min\bigl\{2B/\epsilon_j, 2(B/B_k)R\bigr\}$. We will also make use of notation $j^*(k):=\lceil \log_2(R/B_k)\rceil$, so that
$$
j\le j^*(k)\Rightarrow P_{j,k}=2B/\epsilon_j,\qquad j>j^*(k) \Rightarrow P_{j,k}=2(B/B_k)R.
$$

Fixing $j\ge 0$, $k\in[r]$, we write for any $\theta > 0$ (a specific $\theta$ will be chosen later):
$$
\dE\, e^{(\theta/n)\rho^{-r}[Z_i-\dE Z_i]/\epsilon_j}=1+\left(\frac{\theta}{n}\right)^2 \dE \Bigl[(\epsilon_j^{-1}\rho^{-r}(Z_i-\dE Z_i))^2 F((\theta/n)(\epsilon_j^{-1}\rho^{-r}(Z_i-\dE Z_i)))\Bigr], 
$$
where 
$$
F(x):=x^{-2}[e^{x}-x-1]\le e^{|x|}.
$$
Thus using this bound and the inequality $xy\le x^2+y^2$:
\begin{equation}
\label{eq:exp_moment_main}
\dE\, e^{(\theta/n)\rho^{-r}[Z_i-\dE Z_i]/\epsilon_j}\le 1+\left(\frac{\theta}{n}\right)^2\left[\dE((\epsilon_j^{-1}\rho^{-r}(Z_i-\dE Z_i))^4+\dE e^{2(\theta/n)\rho^{-r}|Z_i -\dE Z_i|/\epsilon_j}\right].
\end{equation}

By the sub-gaussian tail assumption, $\dE(\epsilon_j^{-1}\rho^{-r}(Z_i-\dE Z_i))^4$ is bounded by a constant $\kappa_r$ dependent on $r$. We now assume that $\theta$ is such that
$$
\frac{\theta}{n}\le \min\left(\frac{(\rho^{-r}P_{j,k})^{2/r-1}}{8}, 1\right),
$$
which is equivalent to having $(\theta / n)y \le y^{2/r} / 8$ for $y \in [0, \rho^{-r}P_{j,k}]$ and $r \geq 2$. Then, $\dE e^{2(\theta/n)\rho^{-r}|Z_i -\dE Z_i|/\epsilon_j}$ is also bounded by another constant $\kappa'_r$ dependent on $r$. Indeed, by the sub-gaussian tail assumption, $|\dE Z_i|\le \rho^r \epsilon_j  s_r$ for some $r$-dependent constant, and we can then use the fact that:
\begin{align*}
\dE e^{\alpha X} &= \int_{0}^{\infty}e^{kz}p(z)dz \\
&= \int_{0}^{\infty} \left(1 + \alpha \int_{0}^{z} e^{\alpha y}\right) p(z)dz dy \\
&= 1 + \alpha  \int_{0}^{\infty} \int_{y}^{\infty} e^{\alpha y} dy p(z)dz \\
&= 1 + \alpha  \int_{0}^{\infty} e^{\alpha y} p(X \geq y) dy,
\end{align*}
with $\alpha = 2 \theta / n$ and $X = \rho^{-r} |Z_i| \epsilon_j$ to get:
\begin{align*}
\dE\, e^{2(\theta/n)\rho^{-r}|Z_i-\dE Z_i|/\epsilon_j}
&\le \dE\, e^{2(\theta/n)\rho^{-r}(|Z_i| + |\dE Z_i|)/\epsilon_j} \\
&\le e^{2(\theta / n) s_r} \dE e^{2\theta/n\rho^{-r}|Z_i|/\epsilon_j} 
\\
&\le e^{2(\theta/n)s_r}[1+\frac{2\theta}{n}\int_0^{\infty}e^{2(\theta/n)y}[\dP(Z_i\ge y\rho^r \epsilon_j)+\dP(-Z_i\ge y\rho^r\epsilon_j)]dy]
\\
&\le e^{2(\theta/n)s_r}[1+\frac{2\theta}{n}2r\int_0^{\rho^{-r}P_{j,k}}e^{2(\theta/n)y-y^{2/r}/2}dy],\\
&\le e^{2(\theta/n)s_r}[1+\frac{2\theta}{n}2r\int_0^\infty e^{-y^{2/r}/4}dy]\\
&= e^{2(\theta/n)s_r}[1 + \frac{\theta}{n}c_r].
\end{align*}
We finally use the fact that $\theta / n \le 1$ to write $\dE e^{2(\theta/n)\rho^{-r}|Z_i-\dE Z_i|/\epsilon_j} \leq \kappa^\prime_r$, with $\kappa^\prime_r = e^{2s_r}[1 + c_r]$. We write $\kappa^{\prime \prime}_r = \kappa_r + \kappa^\prime_r$ and use Equation~\eqref{eq:exp_moment_main} together with the independence of the $Z_i$ to obtain:
$$
\dE\, e^{\theta \rho^{-r} \left[\frac{1}{n}\sum_{i=1}^n Z_i - \dE Z_i\right]/\epsilon_j} \le \left( 1 + \left(\frac{\theta}{n}\right)^2 \kappa^{\prime \prime}_r\right)^n \le e^{\frac{\theta^2}{n} \kappa^{\prime \prime}_r}.
$$

Thus, using that $\dP(X \geq y) = \dP(e^X \geq e^y) \leq e^{-y} \dE e^X$ (Markov Inequality), we have that for fixed $u_\ell$ $u_\ell,v_{\ell},\; \ell\in[r]$ in the suitable $\epsilon$-nets is upper bounded for all $\theta\in[0,\min(n, n(\rho^{-r} P_{j,k})^{2/r-1}/8)]$ as:
\begin{equation} \label{eq:markov_zi}
\dP\left(\frac{1}{n}\sum_{i\in[n]}Z_i-\dE Z_i\ge \rho^r \epsilon_j t_{j,k}\right) \le \exp\left( (r+1)d\ln(1+ 2^{j+2}) - \theta t_{j,k} + \kappa''_r \theta^2/n\right).
\end{equation}

We see in Equation~\eqref{eq:z_i} that the variables $Z_i$ are built by fixing a specific either $u_\ell$ for $\ell < k$, $v_\ell$ for $\ell > k$, and $u_k$ and $v_k$, meaning that there are actually $r+1$ variables to be fixed in nets of resolution either $2^{-j}$ or $2^{-j - 1}$. Note that all $Z_i$ for $i \in \{1, \cdots, n\}$ are constructed with the same choice of $u_\ell$ and $v_\ell$. Therefore, the number of possible choices for $u_\ell\in\cN_j$ and $v_\ell\in\cN_{j+1}$ involved in the definition of $Z_i$ is upper-bounded by
$$
|\cN_{j+1}|^{r+1}\le e^{d (r+1) \ln(1+2^{j+2})}.
$$
Combining this with Equation~\eqref{eq:markov_zi}, we obtain using a union bound that:
\begin{align*}
\dP\left(\sup_{u_\ell, v_\ell} \Big\{Z-\dE Z \Big\} \ge \rho^r \epsilon_j t_{j,k}\right) &= \dP\left(\cup_{u_\ell, v_\ell} \Big\{Z-\dE Z \ge \rho^r \epsilon_j t_{j,k}\Big\}\right)\\
&\le \sum_{u_\ell, v_\ell} \dP\left(Z-\dE Z \ge \rho^r \epsilon_j t_{j,k}\right)\\
&\le \exp\left( (r+1)d\ln(1+ 2^{j+2}) - \theta t_{j,k} + \kappa''_r \theta^2/n\right).
\end{align*}
Let now $\theta_{j,k}=\min(n, n(\rho^{-r}P_{j,k})^{2/r-1}/8,\sqrt{nd})$, and 
$$
t_{j,k}=\kappa''_r \frac{\theta_{j,k}}{n}+\frac{1}{\theta_{j,k}}[d(r+1)\ln(1+2^{j+2})+\gamma+2\ln(j+1)],
$$
where $\gamma>0$ is a free parameter. 
We then use the chaining decomposition of 
$$Y = \sup_x \Bigg\{ \sum_{j \geq 0, k\in[r]} Z - \dE Z \Bigg\},$$
and another union bound on $j$ and $k$ to write that:
\begin{align*}
    \dP\left(Y\ge \rho^r\sum_{j\ge 0,k\in[r]}\epsilon_j t_{j,k}\right) &= 
    \dP\left( \sup_x \Big\{ \sum_{j \geq 0, k\in[r]} Z - \dE Z \Big\}\ge \rho^r\sum_{j\ge 0,k\in[r]}\epsilon_j t_{j,k}\right)\\
    &\le \dP\left( \sum_{j \geq 0, k\in[r]} \sup_x \{Z - \dE Z\} \ge \rho^r\sum_{j\ge 0,k\in[r]}\epsilon_j t_{j,k}\right)\\
    &\le \sum_{j \geq 0, k\in[r]}  \dP\left( \sup_{u_\ell, v_\ell} \{Z - \dE Z\} \ge \rho^r\epsilon_j t_{j,k}\right)\\
    &\le \sum_{j \geq 0, k\in[r]} e^{-\gamma - 2 \ln(1 + j)}.
\end{align*}
In the end, using that $\sum_{j \geq 1} j^{-2} = \pi^2 / 6$, we obtain:
$$
\dP\left(Y\ge \rho^r\sum_{j\ge 0,k\in[r]}\epsilon_j t_{j,k}\right)\le r\frac{\pi^2}{6} e^{-\gamma}.
$$
Moreover, one has
$$
\epsilon_jt_{j,k}\le\epsilon_j A_r(1+j)(d+\gamma)\left[\frac{(\rho^{-r}P_{j,k})^{1-2/r}}{n}+\frac{1}{\sqrt{nd}} + \frac{1}{n}\right],
$$
for some suitable constant $A_r$ dependent only on $r$. Fix some $k\in[r]$. Write:
\begin{align*}
\frac{1}{A_r}\sum_{j\ge 0}\epsilon_j t_{j,k}
&\le 4\frac{d+\gamma}{\sqrt{nd}}+\frac{d+\gamma}{n}
 \sum_{j=0}^{j^*(k)}\epsilon_j(2\rho^{-r}B/\epsilon_j)^{1-2/r}(1+j)\\
&\quad +\frac{d+\gamma}{n}\sum_{j>j^*(k)}\epsilon_j(2\rho^{-r}BR/B_k)^{1-2/r}(1+j)
\\
&\le 4\frac{d+\gamma}{\sqrt{nd}}+\frac{d+\gamma}{n}A'_r(\rho^{-r}B)^{1-2/r}\left\{1+(B_k/R)^{2/r}\ln(R/B_k)\right\},
\end{align*}
where $A'_r$ is another constant depending only on $r$. Since $B_k\le R$, then $(B_k/R)^{2/r}\ln(R/B_k)$ is bounded by a ($r$-dependent) constant. 
\end{proof}

We know present Corollary~\ref{corr:concentration_subgaussian}, which is a consequence of Theorem~\ref{thm:subgaussian_gen}. We consider again i.i.d. $a_i$, bounded by $R$, satisfying the subgaussian tail assumption with parameter $\rho$, and some function $\varphi$ that is 1-Lipschitz, and uniformly bounded by $B_{\varphi}$. Writing 
\begin{equation}\label{eq:defH}
H(x)=\frac{1}{n}\sum_{i=1}^na_i a_i^\top \varphi(a_i^\top x),
\end{equation}
We have the following corollary.
\begin{corollary}\label{corr:concentration_subgaussian}
Thus for $1 > \delta>0$, with probability at least $1-\delta$, it holds for some $C > 0$ that
\begin{equation}
\sup_{x\in\cS_1}\opn{H(x)}\le C\rho^3(d+\ln(1/\delta)+\ln(5\pi^2/6))\left[\frac{1+\rho^{-1}B_\varphi}{\sqrt{dn}}+\frac{1+\{\rho^{-3}R^2B_\varphi\}^{1-2/3}}{n}\right].
\end{equation}
\end{corollary}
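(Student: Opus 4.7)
The plan is to derive the corollary from Theorem~\ref{thm:subgaussian_gen} by expressing $\opn{H(x)-\dE H(x)}$ as a supremum of the empirical process considered there. First I would use the variational identity $\opn{M}=\sup_{y\in\cS_1}|y^\top M y|$ for symmetric $M$, together with the standard relaxation $\sup_{y}y^\top M y\le \sup_{y_1,y_2}y_1^\top M y_2$ obtained by decoupling the two copies of $y$. Taking the supremum over $x$ as well, the quantity to bound becomes
$$\sup_{x,y_1,y_2\in\cS_1}\Bigl|\frac{1}{n}\sum_{i=1}^n (a_i^\top y_1)(a_i^\top y_2)\varphi(a_i^\top x)-\dE(a^\top y_1)(a^\top y_2)\varphi(a^\top x)\Bigr|,$$
which is a triple-product empirical process tailored to an $r=3$ application of Theorem~\ref{thm:subgaussian_gen} with $\varphi_1=\varphi_2=\mathrm{id}$ and $\varphi_3=\varphi$.

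The one hitch is that Theorem~\ref{thm:subgaussian_gen} requires $\varphi_k(0)=0$; the identity maps satisfy this, but $\varphi$ need not. I would therefore split $\varphi(t)=\tilde\varphi(t)+\varphi(0)$ with $\tilde\varphi(t)=\varphi(t)-\varphi(0)$, which is $1$-Lipschitz, vanishes at $0$, and is bounded by $2B_\varphi$. The $\tilde\varphi$ piece yields a direct $r=3$ application with $B_1=B_2=R$ (using $|a_i^\top y_k|\le R$), $B_3=2B_\varphi$, hence $B=2R^2B_\varphi$, producing a bound of order $\rho^3(d+\gamma)[1/\sqrt{dn}+(\rho^{-3}R^2B_\varphi)^{1/3}/n]$. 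The constant piece gives $\varphi(0)\cdot[\tfrac{1}{n}\sum_i(a_i^\top y_1)(a_i^\top y_2)-\dE(a^\top y_1)(a^\top y_2)]$, to which Theorem~\ref{thm:subgaussian_gen} with $r=2$ and $B=R^2$ applies; multiplying by $|\varphi(0)|\le B_\varphi$ gives a bound of order $\rho^2 B_\varphi (d+\gamma)[1/\sqrt{dn}+1/n]$.

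Combining the two deviations via a union bound, the $r=3$ and $r=2$ applications fail with probability at most $3\pi^2/6\cdot e^{-\gamma}$ and $2\pi^2/6\cdot e^{-\gamma}$ respectively, so the choice $\gamma=\ln(5\pi^2/(6\delta))=\ln(5\pi^2/6)+\ln(1/\delta)$ brings the joint failure probability down to $\delta$ and accounts for the $\ln(5\pi^2/6)$ appearing in the statement. Factoring $\rho^3$ out of the sum of the two bounds produces exactly $\rho^3(d+\gamma)[(1+\rho^{-1}B_\varphi)/\sqrt{dn}+(1+(\rho^{-3}R^2B_\varphi)^{1/3})/n]$, with the ``$1$'' in each bracket coming from the $r=3$ contribution and the $\rho^{-1}B_\varphi$ and $(\rho^{-3}R^2B_\varphi)^{1/3}$ factors tracking the two $B$'s.

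The argument is largely bookkeeping: no new probabilistic input is needed beyond Theorem~\ref{thm:subgaussian_gen}. I expect the main (mildly) delicate step to be the $\varphi-\varphi(0)$ splitting, without which the hypothesis $\varphi_k(0)=0$ of Theorem~\ref{thm:subgaussian_gen} is not met. The remaining work is consolidating the four terms from the two applications into the compact bracket of the corollary, in particular absorbing the subdominant $\rho^2 B_\varphi/n$ piece from the $r=2$ application either into the constant $C$ or into the larger $\rho^2(R^2B_\varphi)^{1/3}/n$ piece from the $r=3$ application.
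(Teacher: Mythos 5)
Your proposal is correct and follows essentially the same route as the paper: split $\varphi=\tilde\varphi+\varphi(0)$ with $\tilde\varphi(t)=\varphi(t)-\varphi(0)$, apply Theorem~\ref{thm:subgaussian_gen} with $r=3$ ($\varphi_1=\varphi_2=\mathrm{id}$, $\varphi_3=\tilde\varphi$) to the centered piece and with $r=2$ to the constant piece, then union bound and set $\gamma=\ln(5\pi^2/(6\delta))$. The only cosmetic difference is your explicit bookkeeping of the factor $2$ in $B_3\le 2B_\varphi$, which the paper absorbs into the constant $C$.
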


\begin{proof}
Let us write $\varphi_3(u)=\varphi(u)-\varphi(0)$. Then $\varphi_3$ satisfies our assumptions (1-Lipschitz, $\varphi_3(0)=0$). Moreover, we can decompose matrix $H(x)-\dE H(x)$ into $M(x)+N$, where
$$
M(x)=\frac{1}{n}\sum_{i=1}^{n}\left[a_i a_i^\top \varphi_3(a_i^\top x)-\dE a_1 a_1^\top \varphi_3(a_1^\top x)\right], \qquad N=\frac{1}{n}\sum_{i=1}^{n}\left[a_i a_i^\top \varphi(0) -\dE a_1 a_1^\top\varphi(0)\right].
$$
Taking $r=2$, $\varphi_1=\varphi_2=Id$, the Theorem~\ref{thm:subgaussian_gen} gives us that
\begin{equation}
\dP\left(\opn{N}\ge C_2|\varphi(0)|\rho^2(\gamma+d)\left(1/\sqrt{d n}+1/n\right)\right)\le 2\frac{\pi^2}{6}e^{-\gamma}.
\end{equation}
Taking next $r=3$, and $B=R^2B_{\varphi}$, we obtain
\begin{equation}
\dP\left(\sup_{x\in\cS_1}\opn{M(x)}\ge C_3\rho^3(d+\gamma)\left(1/\sqrt{d n}+[\rho^{-3}R^2B_{\varphi}]^{1-2/3}/n\right)\right)\le 3\frac{\pi^2}{6}e^{-\gamma}.
\end{equation}
Combined, these two bounds give us that for all $\gamma>0$, with $C=C_2+C_3$:
\begin{equation}
\dP\left(\sup_{x\in\cS_1}\opn{H(x)}\ge C\rho^3(d+\gamma)\left[\frac{1+\rho^{-1}B_\varphi}{\sqrt{dn}}+\frac{1+\{\rho^{-3}R^2B_\varphi\}^{1-2/3}}{n}\right]\right)\le 5\frac{\pi^2}{6}e^{-\gamma}.
\end{equation}
We finally take $\gamma = - \ln\left(\frac{6\delta}{5 \pi^2}\right)$.
\end{proof}

The last step required to prove Theorem~\ref{thm:concentration_subgaussian} is to consider the supremum over $\cB(0, D)$ with an arbitrary $M_\ell$-Lipschitz function, which can be done by direct reduction:

\begin{proof}[Proof of Theorem~\ref{thm:concentration_subgaussian}]
To apply this to $\ell^\second$, defined on $\cB(0,D)$ and $M_\ell$ Lipschitz, we apply Corollary~\ref{corr:concentration_subgaussian} to $\varphi(x) = \frac{1}{M_\ell D}\ell^\second(Dx)$ (which is $1$-Lipschitz on $\cB(0, 1)$). Then, $B_\varphi = B_\ell / M_\ell D$ and the right hand side must be multiplied by $M_\ell D$.
\end{proof}

\begin{remark}
\emph{Note that there is a difference in the way Theorem~\ref{thm:sudakov_gen} and Theorem~\ref{thm:subgaussian_gen} are applied to our linear models problem. In particular, Theorem~\ref{thm:sudakov_gen} considers $\varphi_1 = \| \cdot \|^2$ and $\varphi_2 = \ell^\second$, whereas Theorem~\ref{thm:subgaussian_gen} uses $\varphi_1 = \varphi_2 = Id$ and $\varphi_3 = \ell^\second$. Theorem~\ref{thm:sudakov_gen} can be adapted to work with $r=3$, but the bound does not improve when splitting $\| \cdot \|^2$ into $Id \times Id$. Similarly, Theorem~\ref{thm:subgaussian_gen} could be used with $r=2$ and $\varphi_1 = \| \cdot \|^2 / (2R)$ (to respect the $1$-Lipschitz assumption), but in this case the bound can only be worse since the main difference is that the $\rho^3$ factor becomes $R \rho^2$, and $\rho$ is generally smaller than $R$.}
\end{remark}

\subsection{Tightness of Theorem~\ref{thm:subgaussian_gen}}
\label{app:concentration_tightness}
Consider that the $a_i$ uniformly distributed on the sphere with radius $R=\sqrt{d}$, and take for $f_k$ the identity. Such vectors can be constructed by taking vectors $A_i$ with coordinates $i.i.d.$ standard gaussian, and setting $a_i= \sqrt{d}\lVert A_i\rVert^{-1}A_i$. 
The subgaussianity parameter $\rho$ can then be taken equal to 1.

Then, using known results about maximal correlation between variables with fixed marginals~\citep[e.g.,][Section~3]{vershynin2019high}, the expectation
$
\dE \prod_{k=1}^r a_1^{\top}x_k
$
is maximized, over choices $x_k\in\cS_1$, by taking $x_1=\cdots=x_r$. We may choose $x_1=e_1$, the first unit vector, by rotational invariance, and thus the expectation is upper-bounded as:
$$
\dE \prod_{k=1}^r a_1^{\top}x_k \le \dE d^{r/2} \dE\left[ \frac{|A_i(1)|^r}{\lVert A_i\rVert^r}\right].
$$
This is of order 1, as can be shown using concentration inequalities on the deviations of $\lVert A_i\rVert$ from $\sqrt{d}$. Consider then the empirical sum $\frac{1}{n}\sum_{i\in[n]}\prod_{k\in[r]}a_i^{\top}x_k$. Choose $x_k=d^{-1/2}a_1$ for all $k\in[r]$. Then this empirical sum evaluates to
$$
\frac{1}{n}\sum_{i\in[n]}\prod_{k\in[r]}a_i^{\top}x_k = \frac{1}{n}d^{r/2} +\frac{1}{n}\sum_{i=2}^{n}\prod_{k\in[r]}a_i^{\top}a_1.
$$
The second sum can be shown to be of order 1 (conditioning on $a_1$, and then using, \emph{e.g.}, Bienaymé-Tchebitchev inequality). Thus, one cannot hope to establish concentration without extra assumptions on the data distribution unless $d^{r/2}=O(n)$. 

Contrast this with the result of Theorem~\ref{thm:subgaussian_gen}: for $R=\sqrt{d}$, $B=R^r$ and $\rho=1$, it gives that 
$$Y\le O(d^{(r/2)(1-2/r)}d/n)=O(d^{r/2}/n).$$
Thus the result is sharp for the particular example we just considered.

\section{Experiment Setting and Additional Results}
\label{app:experiments}

Some implementation details are omitted in the main text. To ease the reader's understanding, we provide these details here, along with some additional experimental results.

\textbf{Optimization problem.} We used the logistic loss with quadratic regularization, meaning that the function at node~$i$ is:
$$f_i: x \mapsto \frac{1}{m}\sum_{j=1}^m \log\left(1 + \exp(-y_{i,j} x^\top a_j^{(i)})\right) + \frac{\lambda}{2}\|x\|^2,$$
where $y_{i,j} \in \{-1, 1\}$ is the label associated with $a_j^{(i)}$, the $j$-th sample of node $i$. The local datasets are constructed by shuffling the LibSVM datasets, and then assigning a fixed portion to each worker. Then, the server subsamples $n$ points from its local dataset to construct the preconditioning dataset. To assess the suboptimality, we let the best algorithm run for more time in order to get a good approximation of the minimum error. Then, we subtract it to the running error of an algorithm to get the suboptimality at each step. 

\textbf{Tuning $\mu$.} We tune the base value of $\mu$ by starting from $0.1 / n$ and then decreasing it as long as it is stable, or increasing it as long as it is unstable. We multiply or divide $\mu$ by a factor of $1.2$ at each time. 

\textbf{Adjusting $\alpha_t$ and $\beta_t$.} We found that choosing $A_0 = 0$ and $B_0 = 1$ for SPAG is usually not the best choice. Indeed, rates are asymptotic and sequences $\alpha_t$ and $\beta_t$ converge very slowly when $\sigma_\rel$ is small, whereas we typically rarely use more than about 100 iterations of SPAG. Therefore, we start the algorithm with $A_{t_0}$ and $B_{t_0}$ with $t_0 > 0$ instead. We used $t_0 = 50$, but SPAG is not very sensitive to this choice.

\textbf{Tuning the momentum.} Figure~\ref{fig:impact_tuning} evaluates the relevance of tuning the parameters controlling the momentum of SPAG and HB-DANE. To do so, we compare the default values of $\beta = (1 - (1 + 2\mu / \lambda)^{-1/2})^2$ (for HB-DANE) and $\sigma_\rel = 1 / (1 + 2\mu / \lambda)$ (for SPAG) to values obtained through a grid search on the KDD2010 dataset with $\lambda = 10^{-7}$.
We tune HB-DANE by using a grid-search of resolution $0.05$ to test the values between $0.5$ and $1$. For $n=10^3$, theory predicts a momentum of $\beta = 0.86$ and the grid search gives $\beta = 0.85$. For $n=10^4$, theory predicts $\beta = 0.81$ and the grid search gives $\beta = 0.8$. For SPAG, we test $\sigma_\rel = 10^{-2}, \ 3 \times 10^{-3}, \ 10^{-3}$ and so on until $\sigma_\rel = 10^{-5}$ (so roughly divided by $3$ at each step). For $n=10^3$, theory predicts $\sigma_\rel = 0.005$ and the tuning yields $\sigma_\rel = 0.006$. For $n=10^4$, theory predicts $\sigma_\rel = 0.0099$ and the grid-search leads to $\sigma_\rel = 0.01$. We do not display the curves in this case ($n=10^4$) since they are nearly identical. Therefore, the grid-search always obtains the value on the grid that is closest to the theoretical value of the parameter, and the difference in practice is rather small, as can be seen in Figure~\ref{fig:impact_tuning}. This is why we use default values in the main text.

\textbf{Local subproblems.} Local problems are solved using a sparse implementation of SDCA~\citep{shalev2016sdca}. In practice, the ill-conditionned regime is very hard, especially when $\mu$ is small. Indeed, the local subproblems are very hard to solve, and it should be beneficial to use accelerated algorithms to solve the inner problems. In our experiments, we warm-start the local problems (initializing on the solution of the previous one), and keep doing passes over the preconditioning dataset until $\| \nabla V_t(x_t)\| \le 10^{-9}$ (checked at each epoch). This threshold is important because it greatly affects the performances of preconditioned gradient methods. Figure~\ref{fig:inaccurate_solving} compares the performances of SPAG, DANE and HB-DANE for different number of passes on the inner problems for the RCV1 dataset for $n=10^4$ and $\lambda=10^{-5}$. We use $\mu = 2 \times 10^{-5}$ and a step-size of $1$ for all algorithms. We first see that increasing the number of passes significantly improves the convergence speed of all algorithms. Besides, heavy-ball acceleration does not seem very efficient when local problems are not solved accurately enough. On the contrary, SPAG seems to enjoy faster rates than DANE nevertheless. It would be interesting to understand these different behaviours more in details.

\begin{figure*}[t]
\begin{subfigure}[normal]{0.5\linewidth}
    \includegraphics[width=\linewidth]{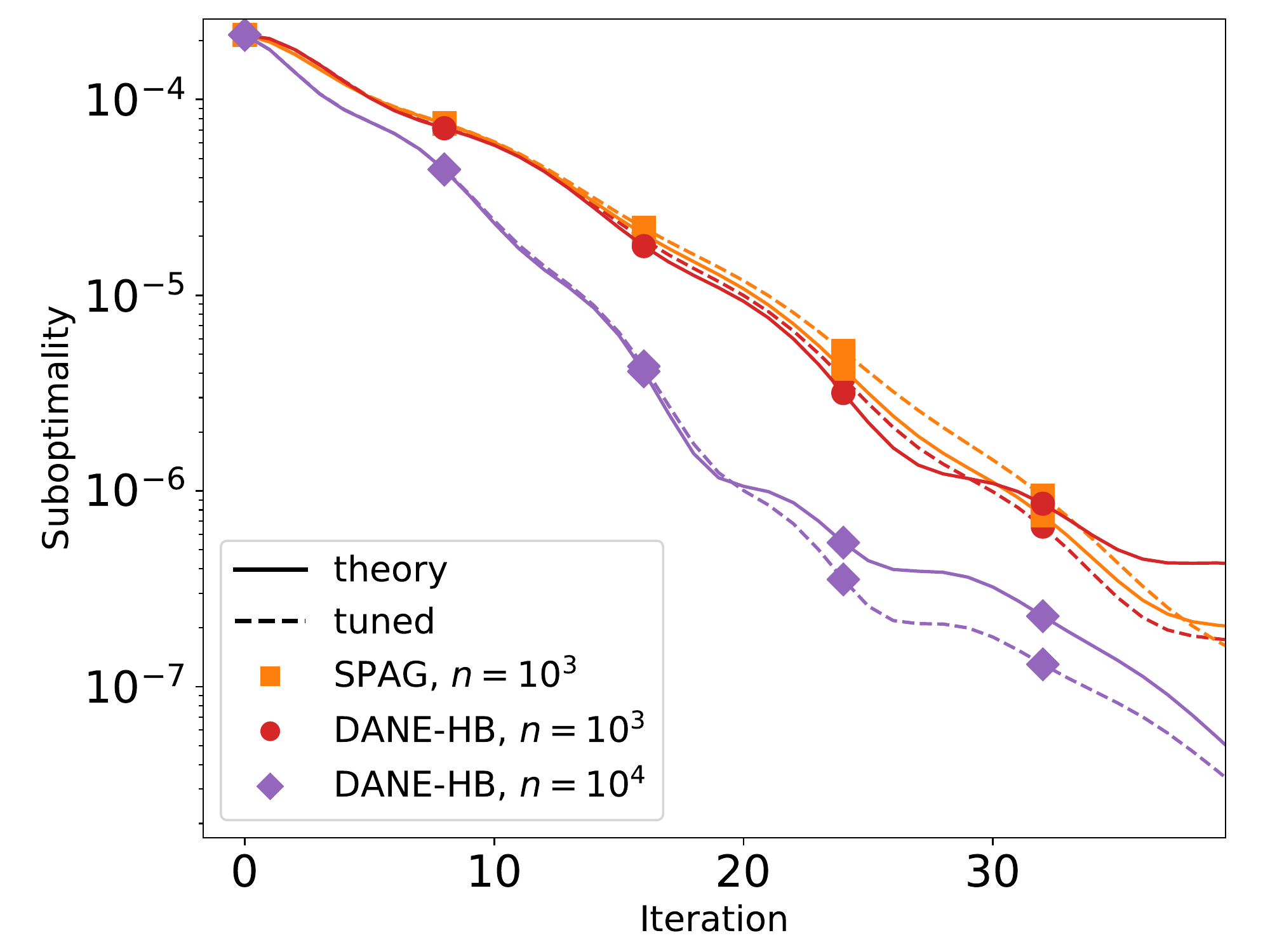}
    \caption{Impact of parameters tuning (KDD2010, $\lambda=10^{-7}$).}
    \label{fig:impact_tuning}
\end{subfigure}
\begin{subfigure}[normal]{0.5\linewidth}
    \includegraphics[width=\linewidth]{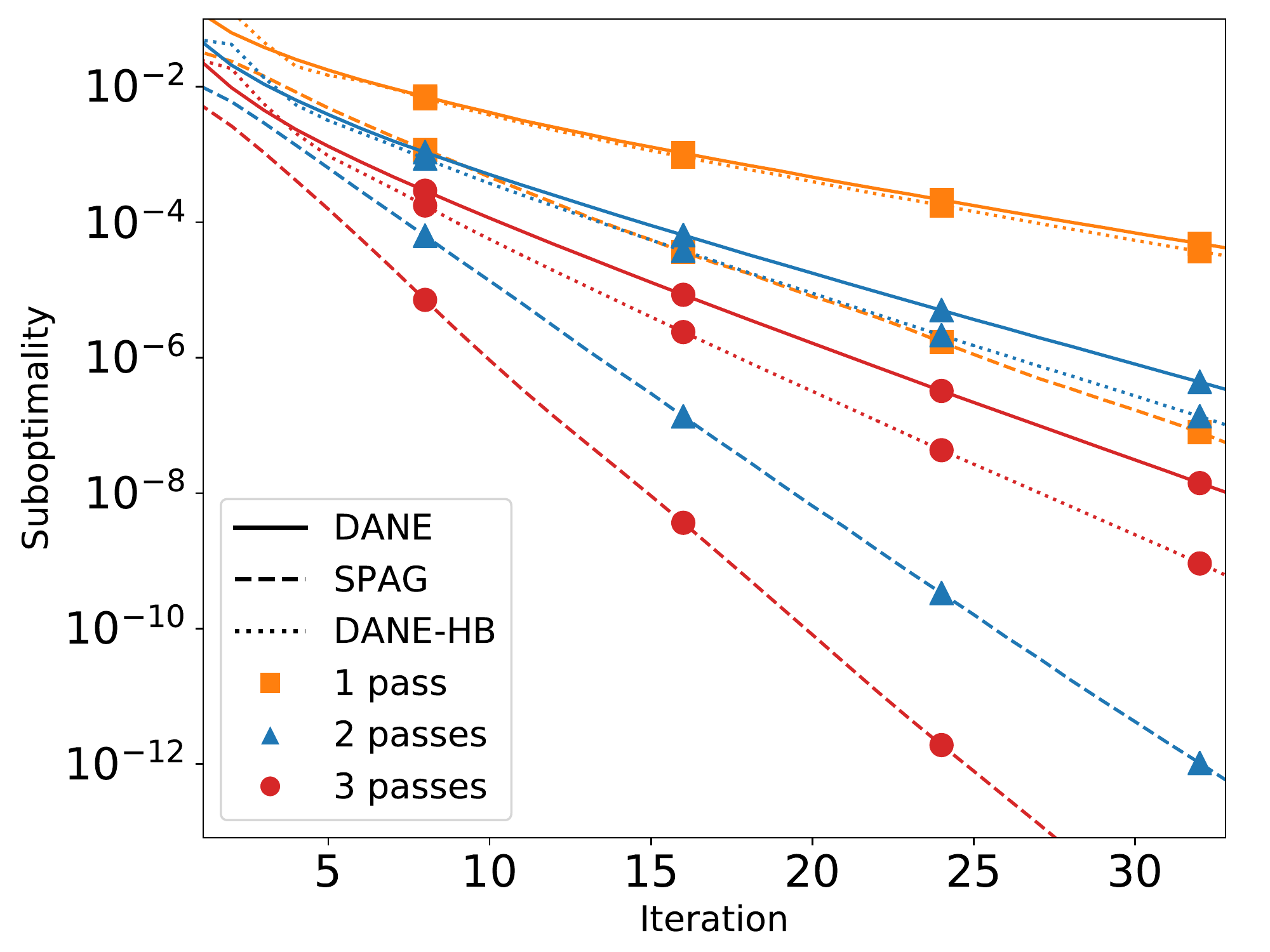}
    \caption{Impact of innacurate solving of the inner problems.}
    \label{fig:inaccurate_solving}
\end{subfigure}\\
\begin{subfigure}[normal]{0.5\linewidth}
    \includegraphics[width=\linewidth]{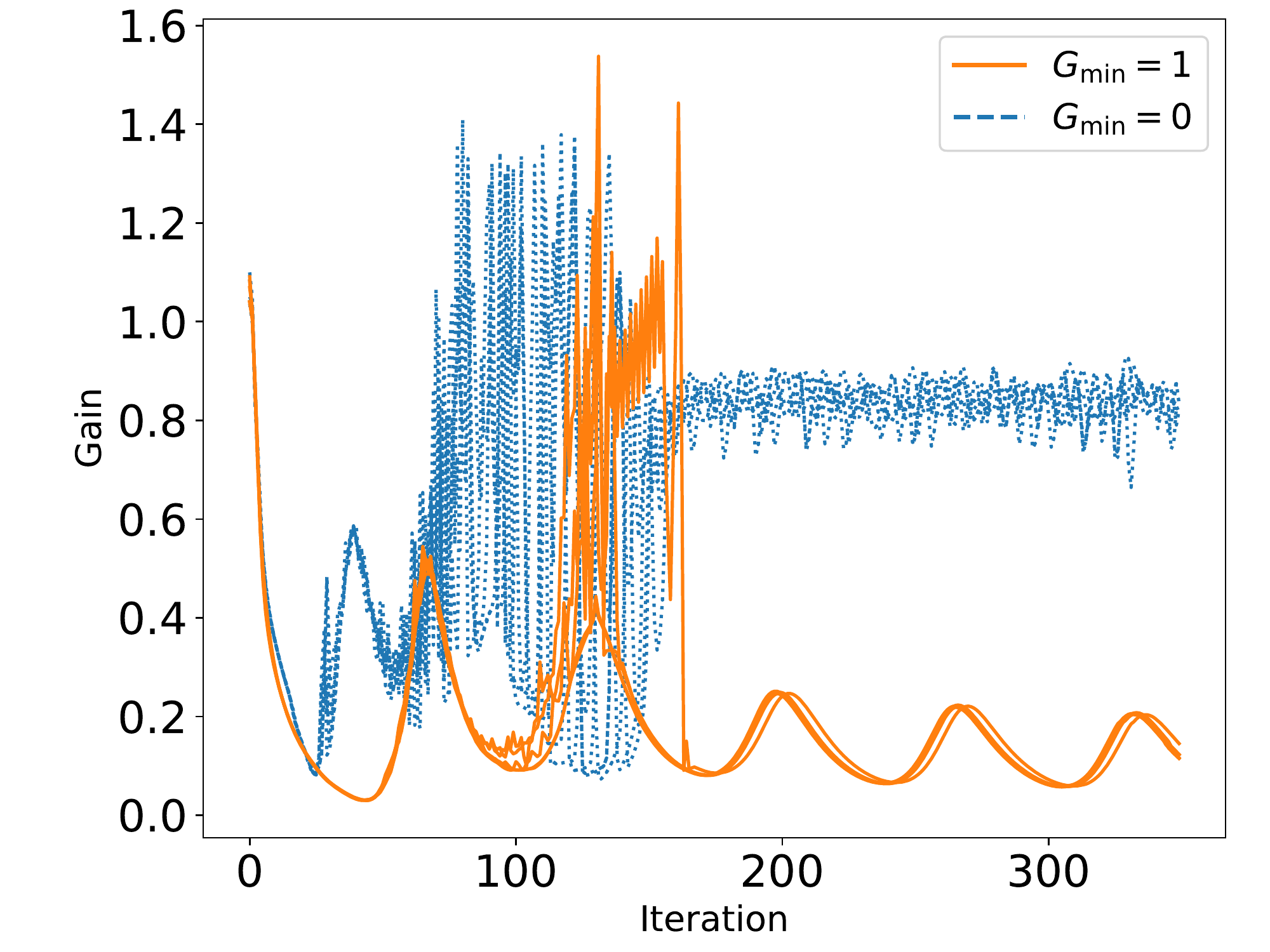}
    \caption{Impact of $G_{\min}$ on the gain.}
    \label{fig:gain}
\end{subfigure}
\begin{subfigure}[normal]{0.5\linewidth}
    \includegraphics[width=\linewidth]{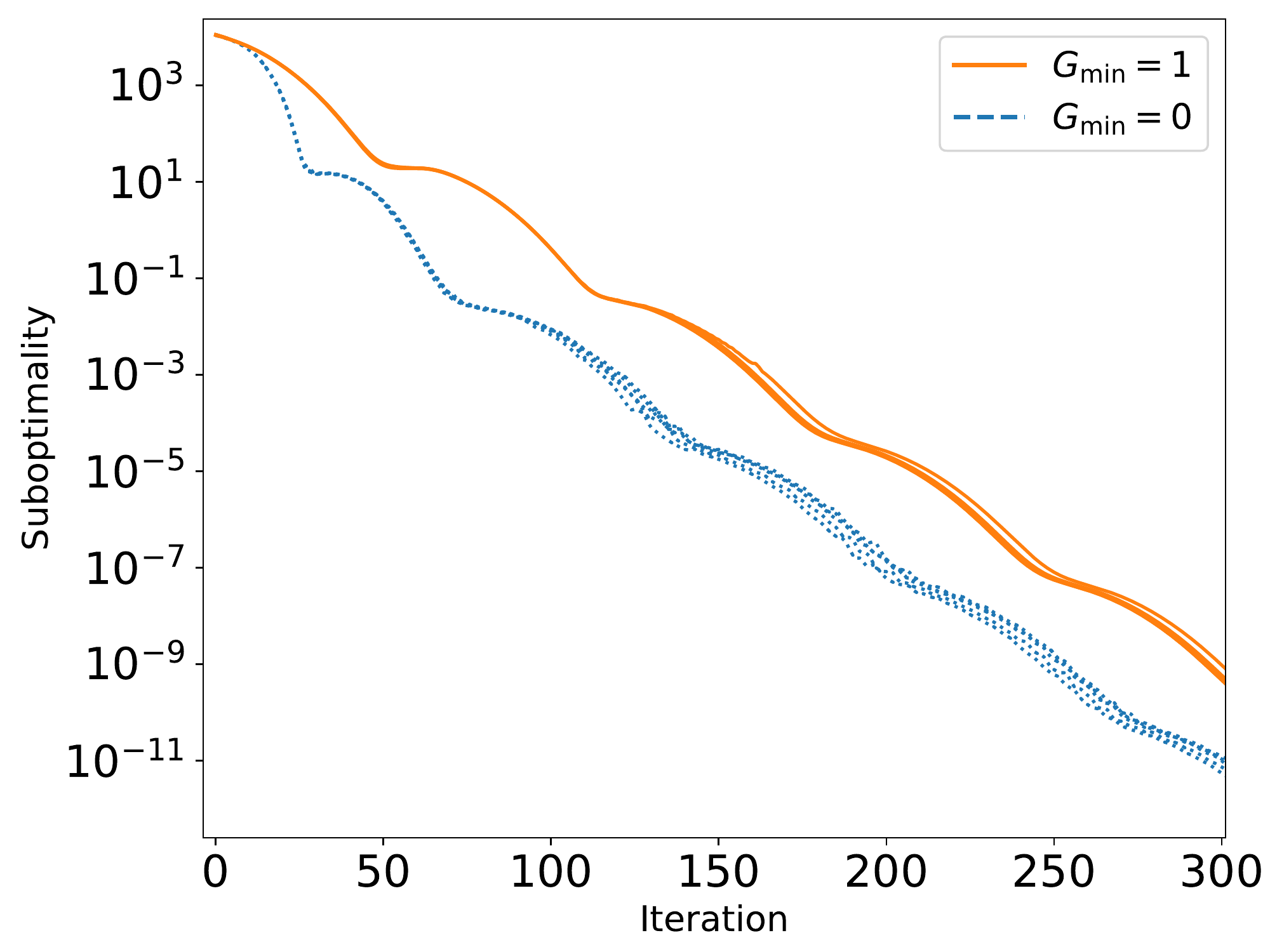}
    \caption{Impact of $G_{\min}$ on the suboptimality.}
    \label{fig:LS_comparison}
\end{subfigure}
 \caption{Impact of several implementation details.} \label{fig:added_exp}
\end{figure*}

\textbf{Gain far from the optimum.} So far, we have presented experiments with good initializations (solution for the local dataset), and argued why $G_t$ was very small in this case. Because of Lemma~\ref{lemma:LS}, one would expect that $G_t$ could be large when $x_t$ is very far from $x_*$. Yet, We see in the proof of Lemma~\ref{lemma:LS} that the Lipschitz constant of the Hessian only needs to be considered for any convex set that contains $x_{t+1}$, $v_{t+1}$, $y_t$ and $v_t$. In the case of logistic regression, the third derivative decreases very fast when far from $0$, meaning that the local Lipschitz constant of the Hessian is small when the iterates are far from $0$. In other words, the Hessian changes slowly when far from the optimum (at least for logistic regression).

We believe that this is the reason why $G_t$ can always be chosen of order 1 (smaller than 2) in our experiments, and that this holds regardless of the initialization. To support this claim, we plot in Figure~\ref{fig:gain} the values of the gain for the RCV1 dataset with $\lambda = 10^{-7}$ and 5 different $x_0$ sampled from $\cN(0, 10^3)$, the normal law centered at $0$ with variance $10^3$. We use a step-size of $0.9$ and $\mu = 2 \times 10^{-5}$. We first see that for $G_{\min} = 1$, the gain is always very low, and actually increases at some point instead of becoming lower and lower, so the fact that we were able to choose $G_t$ of order 1 in the other experiments is not linked to the good initialization. We had to choose a slightly higher $\mu$ than in the other experiments in order to satisfy the relative smoothness condition, which was not satisfied at each iteration otherwise. Since $G_t$ is small in practice and the smaller the $G_t$ the better the rate, we test SPAG with no minimum value for the gain $G_t$. The curve for the gain in this case is shown by $G_{\min} = 0$, and we see that the true gain stabilizes to a higher value, since updates are more agressive. We discuss the efficiency of this version in the next paragraph.  Note that the oscillations are not due to numerical instability or inaccurate solving of the inner problems, but rather to the fact that the step-size is slightly too big so sometimes the smoothness inequality is not verified. Yet, this does not affect the convergence of SPAG, as shown in Figure~\ref{fig:LS_comparison}. 

\textbf{Line Search with no minimum value.} Since the gain is almost always smaller than $1$, the line-search in SPAG generally only consists in checking that $G_t = 1$ works, which can be done locally. Therefore, there is no added communication cost. As discussed earlier, it is possible to allow $G_t < 1$ when performing line search, which makes SPAG slightly more adaptative at the cost of a few more line-search loops. Figure~\ref{fig:LS_comparison} presents the difference between SPAG using a line search with $G_{\min} = 0$ and $G_{\min} = 1$. The curves show the suboptimality for the runs used to generate Figure~\ref{fig:gain}. Note that we omit the cost of line search in the iteration cost (we still count in terms of number of iterations, even though more communication rounds are actually needed when $G_{\min} = 0$). We see that setting $G_{\min} = 0$ is initially slightly faster but that the rate is very similar, so that using $G_{\min} = 0$ may slightly improve iteration complexity but is not worth doing in this case. Note that suboptimality curves for different initializations are almost indistinguishable, which can be explained by the fact that the quadratic penalty term dominates and that all initializations have roughly the same norm (since $d$ is high).

\end{document}